\newcommand{\syoung}[1]{\scalebox{0.75}{$\displaystyle\young#1$}}
\newtheorem{theorem}{Theorem}[section]
\newtheorem{lemma}[theorem]{Lemma}
\newtheorem{proposition}[theorem]{Proposition}
\newtheorem{corollary}[theorem]{Corollary}
\theoremstyle{definition}
\newtheorem{definition}[theorem]{Definition}
\newtheorem{example}[theorem]{Example}
\theoremstyle{remark}
\newtheorem{remark}[theorem]{Remark}
\numberwithin{equation}{section}
\newcommand{\N}{\mathbf{N}}
\newcommand{\Z}{\mathbf{Z}}
\newcommand{\C}{\mathbf{C}}
\renewcommand{\epsilon}{\varepsilon}
\DeclareMathOperator{\GL}{GL}
\DeclareMathOperator{\Sym}{Sym}
\DeclareMathOperator{\sgn}{sgn}
\DeclareMathOperator{\Inf}{Inf}
\DeclareMathOperator{\SSYT}{SSYT}
\DeclareMathOperator{\mon}{mon}
\DeclareMathOperator{\wt}{wt}
\newcommand{\medcup}{\scalebox{0.85}{$\,\bigcup\,$}}
\newcommand{\Ind}{\big\uparrow}
\newcommand{\ind}{\!\uparrow}
\renewcommand{\theta}{\vartheta}
\newcommand{\id}{\mathrm{id}}
\newcommand{\GLEmodr}[1]{\GLEmod_#1}
\newcommand{\GLEmod}{\GL(E)\text{\raisebox{0.5pt}{-}mod}}
\newcommand{\gl}{\mathsf{gl}}
\renewcommand{\b}{\mathsf{b}}
\newcommand{\CPP}{\mathrm{CPP}}	
\newcommand{\Par}{\mathrm{Par}}
\newcommand{\rr}{\ell}
\newcommand{\doubletableau}[4]{%
\begin{tikzpicture}[x=#1,y=-#2]
\draw[thick](0,0)--(2,0);\draw[thick](0,4)--(2,4);
\draw[thick](0,0)--(0,4);\draw[thick](1,0)--(1,4);
\draw[thick](2,0)--(2,4);\node at (0.5,2) {#3}; 
\node at (1.5,2) {#4};
\end{tikzpicture}}
\newcommand{\ltableau}[5]{%
\begin{tikzpicture}[x=#1,y=-#2]
\draw[thick](0,0)--(1.8,0);\draw[thick](0,4)--(1.8,4);
\draw[thick](0,0)--(0,8);\draw[thick](1,0)--(1,8);
\draw[thick](1.8,0)--(1.8,4);\node at (0.5,2) {#3}; 
\draw[thick](0,8)--(1,8);
\node at (1.4,2) {#4};\node at (0.5,6) {#5};
\end{tikzpicture}}
\newcommand{\thindoubletableau}[4]{%
\begin{tikzpicture}[x=#1,y=-#2,line width=0.65pt]
\draw(0,0)--(2,0);\draw(0,4)--(2,4);
\draw(0,0)--(0,4);\draw(1,0)--(1,4);
\draw(2,0)--(2,4);\node at (0.5,2) {#3}; 
\node at (1.5,2) {#4};
\end{tikzpicture}}
\newcommand{\triplevtableau}[3]{
\begin{tikzpicture}[x=0.4cm,y=-0.6cm,line width=1pt]
\draw(0,0)--(2,0)--(2,9)--(0,9)--(0,0);\draw (0,3)--(2,3);\draw (0,6)--(2,6);
\node at (1,1.5) {#1};
\node at (1,4.5) {#2};
\node at (1,7.5) {#3};
\end{tikzpicture}}
\newcommand{\tripletableau}[3]{
\begin{tikzpicture}[x=0.6cm,y=-0.4cm,line width=0.8pt]
\draw(0,0)--(9,0)--(9,2)--(0,2)--(0,0);\draw (3,0)--(3,2);\draw (6,0)--(6,2);
\node at (1.5,1) {#1};
\node at (4.5,1) {#2};
\node at (7.5,1) {#3};
\end{tikzpicture}}
\newcommand{\cp}{\hskip-1.25pt+\hskip-1.25pt}	
\newcounter{thmlistcnt}
\newenvironment{thmlist}%
	{\setcounter{thmlistcnt}{0}%
	\begin{list}{\emph{(\roman{thmlistcnt})}}{%
		\usecounter{thmlistcnt}%
		\setlength{\topsep}{0pt}%
		\setlength{\leftmargin}{0pt}%
		\setlength{\itemsep}{0pt}%
		\setlength{\labelwidth}{17pt}
		\setlength{\itemindent}{30pt}}%
	}%
	{\end{list}}%
\newcounter{defnlistcnt}
\newenvironment{defnlist}%
	{\setcounter{defnlistcnt}{0}%
	\begin{list}{(\roman{defnlistcnt})}{%
		\usecounter{defnlistcnt}%
		\setlength{\topsep}{0pt}%
		\setlength{\leftmargin}{32pt}%
		\setlength{\itemsep}{0pt}%
		\setlength{\labelwidth}{32pt}
		\setlength{\itemindent}{0pt}}%
	}%
	{\end{list}}%	
\newcommand{\SymG}{S}
\newcommand{\df}{\delta} 
\newcommand{\s}{\ell} 
\newcommand{\V}{\hskip-1pt V}
\newcommand{\B}{\mathcal{B}}
\newcommand{\nablanumuE}{\nabla^\nu\bigl(\nabla^\mu(E)\bigr)}	
\newcommand{\nablanumuEt}{\nabla^\nu\bigl(\nabla^{(r) \sqcup \mu}(E^+)\bigr)}	
\newcommand{\nablanumuEh}{\nabla^\nu\bigl(\nabla^{\mu + (1^r)}(E)\bigr)}	
\newcommand{\Ftt}{F(\hskip0.5pt\widetilde{\hskip0.25pt t}\hskip1pt)}
\newcommand{\Fttphic}{F(\hskip0.5pt\widetilde{\hskip0.25pt t}\hskip1pt \phi_c)}
\newcommand{\altbox}[1]{[#1]}
\newcommand{\Bp}{{\mathcal{B}^+}}   %Section 4: SSYT shape (r) \sqcup \mu with entries {1,..,d,d+1}
\newcommand{\CC}{{\mathcal{C}^\star}}     %Section 5: column-standard tableaux shape \mu + (1^r) with entries {1,...,d}
\newcommand{\BB}{{{\mathcal{B}^+}}}
\newcommand{\BBN}[1]{{\mathcal{B}^{+#1}}}     %Section 5: elements of C that are in fact semistandard
\newcommand{\e}{e}                %Section 5: column number of inserted column (b and c used for entries earlier)                                
\newcommand{\pairpq}{(I,J)}
\renewcommand{\u}{e}                %basis of E
\newcommand{\bt}[1]{#1^\star}
\newcommand{\nstar}{n^{\raisebox{-1pt}{$\hskip-0.5pt\scriptscriptstyle \star$}}}
\begin{document}

\title[Plethysms of symmetric functions]{Plethysms of symmetric functions and
highest weight representations}

%    Only \author and \address are required; other information is
%    optional.  Remove any unused author tags.

%    author one information
% \author[short version for running head]{name for top of paper}
\author{Melanie de Boeck}
\address{School of Mathematics, Statistics and Actuarial Science,    University of Kent,   Canterbury,    CT2 7FS, U.K.}
\curraddr{}
\email{melaniedeboeck@hotmail.com}
\thanks{}

%    author two information
\author{Rowena Paget}
\address{School of Mathematics, Statistics and Actuarial Science,   University of Kent,   Canterbury,    CT2 7FS, U.K.}
\curraddr{}
\email{r.e.paget@kent.ac.uk}
\thanks{}

\author{Mark Wildon}
\address{Department of Mathematics,  Royal Holloway,  University of London,     Egham, TW20 0EX, U.K.}
\curraddr{}
\email{mark.wildon@rhul.ac.uk}
\thanks{}
%    \subjclass is required.
\subjclass[2010]{Primary 05E05, Secondary: 05E10, 17B10, 20C30, 22E47}

\date{}

\dedicatory{}

\begin{abstract}
Let $s_\nu \circ s_\mu$ denote the plethystic product of the
Schur functions $s_\nu$ and $s_\mu$. In this article we
define an explicit
polynomial representation corresponding to $s_\nu \circ s_\mu$ with basis 
indexed by certain `plethystic' semistandard tableaux. Using these representations
we prove generalizations of four
results on plethysms due to Bruns--Conca--Varbaro, Brion, Ikenmeyer and
the authors.
In particular, we give a sufficient condition for the
multiplicity $\langle s_\nu \circ s_\mu, s_\lambda\rangle$
to be stable under insertion of new parts into $\mu$ and $\lambda$.
We also
characterize all
maximal and minimal partitions $\lambda$ in the dominance order such that $s_\lambda$
appears in $s_\nu \circ s_\mu$ and determine the corresponding multiplicities
using plethystic semistandard tableaux.
\end{abstract}

\maketitle

\section{Introduction}
\thispagestyle{empty}

Let $s_\lambda$ denote the Schur function labelled by the partition $\lambda$.
Expressing a general plethysm $s_\nu \circ s_\mu$ as a non-negative linear combination
of Schur functions has been identified by Stanley as a fundamental open problem in
symmetric combinatorics \cite[Problem 9]{StanleyPositivity}.
While many partial results are known, often obtained by  deep 
combinatorial, algebraic or geometric arguments,
a satisfying general solution remains out of reach.

In this article we generalize four results on plethysms due to % Manivel \cite{Manivel}, 
%Newell \cite{Newell}, 
Bruns--Conca--Varbaro \cite{BCV}, Brion \cite{Brion}, Ikenmeyer \cite{IkenmeyerThesis}, 
and the authors \cite{deBoeck, PagetWildonTwisted}, and give them unified proofs in the 
setting of polynomial representations of the general linear group. Our proofs are essentially
elementary, requiring little more than basic multilinear algebra and the background
%including the definitions of Schur functions and plethysm,
recalled in \S\ref{sec:background}. % below. In particular, see
The article is intended to be readable by non-experts:
in particular, we define the Schur functions $s_\lambda$ and the plethysm product~$\circ$
in~\S\ref{subsec:plethysm} and include several examples in this introduction.
% and include
%several examples in the introduction and an extended example in \S\ref{sec:tworow}.

To state our main results we need the following notation.
Let $\Par(r)$ denote the set of partitions of $r \in \N_0$ and let $\ell(\lambda)$
denote the number of parts of the partition~$\lambda$. 
%When convenient
%we regard partitions as elements of $\N_0^\N$ (with infinitely many zero entries);
%using this convention we define 
Given partitions
$\lambda$ and $\mu$, let $\lambda \sqcup \mu$
be the partition whose multiset of parts is the disjoint union of the multisets of parts of $\lambda$
and $\mu$. Define $\lambda + \mu$  by $(\lambda+\mu)_i = \lambda_i + \mu_i$.
(As a standing convention, if $i > \ell(\lambda)$ then we
set $\lambda_i = 0$.)
%where if~$i$ exceeds the number of parts of $\lambda$ or $\mu$ then the corresponding summand is zero.
For $N \in \N_0$ define $N\lambda$  by $(N\lambda)_i = N\lambda_i$ for each~$i$.
% defined by adding the 
%parts of $\lambda$ and $\rho$. 
Let $\lambda'$ denote the conjugate of the partition $\lambda$.
%Let $e(\lambda) = \sum_{i=1}^{\ell(\lambda)} (i-1)\lambda_i$

Fix $\mu \in \Par(m)$, $\nu \in \Par(n)$ and $\lambda \in \Par(mn)$.

%\begin{theorem}\label{thm:Manivel}
%Let  $C(\nu)$ denote the multiset
%$\{ j-i : (i,j) \in [\nu] \}$. Let $c \in \N_0$. Then %Manivel A
%\[ q^{e(\nu)} (s_\nu \circ s_{(c + \ell(\nu))})(1,q) = q^{e(\nu')} (s_{\nu'} \circ s_{(c + \ell(\nu'))})(1,q) \]
%if and only if the multisets $C(\nu) + \ell(\nu)$ and $C(\nu') + \ell(\nu')$ are equal.
%%Manivel B does not seem to have a generalization beyond rectangular partitions
%\end{theorem}
%
%In particular, Proposition~\ref{prop:Manivel} 
%holds when  $\nu$ is a rectangle, of the form $(a^b)$ and when $\nu$ is a hook,
%of the form $(a+1,1^b)$. 

%\begin{theorem}\label{thm:irreducible}
%We have
%\[ (s_\nu \circ s_{(m-k,k)})(x_1,x_2) = 1 \]
%if and only if 
%\emph{either} $m-2k \le 1$ \emph{or} $\nu = (1^{m-2k})$ \emph{or} 
%$\nu = ((p+1)^\alpha,p^{m-2k+1-a})$
%for some $p \in \N$ and some $a \in \{0,\ldots, m-2k+1\}$.
%\end{theorem}

\begin{theorem}\label{thm:BCV-N}
%Let $\mu \in \Par(m)$, $\nu \in \Par(n)$ and $\lambda \in \Par(mn)$. 
If $r$ is at least the greatest part of $\mu$ then
\[ \langle s_\nu \circ s_{(r) \sqcup \mu},
s_{(nr) \sqcup \lambda} \rangle = \langle s_\nu \circ s_\mu , s_\lambda \rangle. \]
\end{theorem}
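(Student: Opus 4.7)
The strategy is to construct an explicit bijection between the combinatorial models for the two multiplicities. By the combinatorial description developed earlier in the paper, $\langle s_\nu \circ s_\mu, s_\lambda\rangle$ is the number of highest-weight plethystic semistandard tableaux (PSSYTs) of outer shape $\nu$, inner shape $\mu$, and weight $\lambda$, and similarly for the left-hand side. One may also reduce to the case $\lambda_1 \leq nr$, since otherwise both multiplicities vanish: any Schur component $s_\lambda$ of $s_\nu \circ s_\mu$ satisfies $\lambda_1 \leq n\mu_1 \leq nr$, and analogously on the left-hand side.

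Given such a highest-weight PSSYT $T$, define $\Phi(T)$ by acting on each of the $n$ inner $\mu$-tableaux of $T$ independently: first increment every entry by $1$, then prepend a new top row of $r$ ones. Since $r \geq \mu_1$ the result has shape $(r) \sqcup \mu$, and semistandardness is immediate: the shifted entries of the original first row are now at least $2$, hence strictly greater than the prepended $1$'s directly above them. The outer semistandard condition is preserved, because a uniform upward shift combined with prepending a common top row cannot alter the relative order of any two inner tableaux under any reasonable total order. The total weight of $\Phi(T)$ is $(nr, \lambda_1, \lambda_2, \ldots) = (nr) \sqcup \lambda$, using $nr \geq \lambda_1$.

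For the inverse, the key observation is that the shape $(r) \sqcup \mu$ has exactly $r$ columns (again using $r \geq \mu_1$), so column-strictness forces at most $r$ ones in any one inner tableau; with $nr$ ones distributed over $n$ inner cells, each tableau then contains exactly $r$ ones, filling its entire top row. Stripping these top rows and decrementing the remaining entries by $1$ recovers the original PSSYT. The principal remaining point, which I expect to be the main obstacle, is that $\Phi$ must send highest-weight PSSYTs to highest-weight PSSYTs. In the Yamanouchi/lattice-word formulation of the highest-weight condition used earlier in the paper, the reading word of $\Phi(T)$ differs from that of $T$ by insertion of blocks of $1$'s together with a uniform shift of every other letter upward by $1$; both operations preserve the lattice condition, so the bijection restricts correctly to the highest-weight subsets. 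Care is needed here to check that the specific reading order on plethystic tableaux used in the paper interacts well with the shift-and-prepend operation, but no genuinely new combinatorics is required beyond the lattice-word invariance.
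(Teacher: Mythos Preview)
Your map $\Phi$ is exactly the map the paper uses, and your argument for its invertibility (each inner tableau must have exactly $r$ ones, necessarily filling the top row) is the content of the paper's weight-space isomorphism lemma. The gap is in the sentence ``By the combinatorial description developed earlier in the paper, $\langle s_\nu \circ s_\mu, s_\lambda\rangle$ is the number of highest-weight plethystic semistandard tableaux \ldots''. No such description exists in the paper, and none is known: giving a combinatorial rule of Yamanouchi/lattice-word type for general plethysm coefficients is precisely Stanley's Problem~9, the open problem cited in the opening paragraph. Theorem~\ref{thm:maxls} only covers \emph{maximal} $\lambda$ in the dominance order, where every weight vector of that weight is automatically highest-weight; for general $\lambda$ there is no tableau-level ``highest-weight'' condition to preserve.

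The paper instead works with genuine highest-weight \emph{vectors} in $\nabla^\nu\bigl(\nabla^\mu(E)\bigr)$, which are linear combinations $v = \sum_S \alpha_S F(S)$ rather than individual tableaux. Your $\Phi$ lifts to the linear map $v \mapsto \widetilde{v} = \sum_S \alpha_S F(\widetilde{S})$, and the substance of the proof is showing that $X^{(c)} \cdot v = 0$ if and only if $X^{(c+1)} \cdot \widetilde{v} = 0$. Applying $X^{(c)}$ to $F(S)$ produces non-semistandard $\mu$-tableau entries which must be straightened via snake relations; the key technical lemma shows that each snake relation for a $\mu$-tableau $t$ has an exact counterpart for $\widetilde{t}$ (the extra top-row box in column $j+1$ contributes only terms that vanish by column repetition), so the two straightenings proceed with identical coefficients. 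That is what replaces your lattice-word invariance step, and it genuinely requires the representation-theoretic setup rather than a reading-word argument.
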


%\begin{theorem}\label{thm:BCV-Ngl}
%Let $E$ be a $d$-dimensional complex vector space. 
%Let $\mu \in \Par(m)$, $\nu \in \Par(n)$ and $\lambda \in \Par(mn)$. If $k \ge a(\mu)$
%and $\ell(\mu) < d$ then
%\[ [\nabla^\nu \nabla^\mu E : \nabla^\lambda E] = [\nabla^\nu \nabla^{\mu \sqcup (k)} :
%\nabla^{\lambda \sqcup (nk)}]. \]
%\end{theorem}

%\begin{theorem}\label{thm:Brion}
%%Let $\mu \in \Par(m)$, $\nu \in \Par(n)$ and $\lambda \in \Par(mn)$. 
%%If $\rho$
%%is a partition then
%If $r \in \N$ then
%\[ \langle s_\nu \circ s_{\mu + N (1^r)},
%s_{\lambda + nN(1^r)} \rangle
%\ge \langle s_\nu \circ s_\mu, s_\lambda \rangle  \]
%for each $N \in \N_0$. Moreover $\langle s_\nu \circ s_{\mu + N \rho}, s_{\lambda + nN(1^r)} \rangle$
%is constant for 
%\[ N \ge n(\mu_1 + \cdots + \mu_{r-1}) + (n-1)\mu_r + \mu_{r+1} -
%(\lambda_1 + \cdots + \lambda_r). \]
%\end{theorem}

\begin{theorem}\label{thm:Brion}
If $r \in \N$ then
\[ \langle s_\nu \circ s_{\mu +  (1^r)},
s_{\lambda + (n^r)} \rangle \ge \langle s_\nu \circ s_\mu, s_\lambda \rangle. \] 
Moreover $\langle s_\nu \circ s_{\mu + N (1^r)}, s_{\lambda + N(n^r)} \rangle$
is constant for $N \in \N_0$ such that
\[ N \ge n(\mu_1 + \cdots + \mu_{r-1}) + (n-1)\mu_r + \mu_{r+1} -
(\lambda_1 + \cdots + \lambda_r). \]
\end{theorem}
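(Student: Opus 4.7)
The plan is to use the explicit combinatorial basis from \S\ref{sec:background}: $\langle s_\nu \circ s_\mu, s_\lambda \rangle$ counts plethystic semistandard tableaux of outer shape~$\nu$, inner shape~$\mu$, and weight~$\lambda$. For the first inequality I would construct an injection~$\Phi$ into the tableaux of inner shape $\mu+(1^r)$ and weight $\lambda+(n^r)$ by prepending the column $(1,2,\ldots,r)^{\mathsf{T}}$ to each of the $n$ inner $\mu$-tableaux. Inner row-semistandardness is preserved because the old row-$i$, column-$1$ entry is $\geq i$ and the inserted value~$i$ sits to its left. Inner column-semistandardness is preserved because the new first column starts $1,2,\ldots,r$ and, if $\ell(\mu)>r$, continues with the old column-$1$ entries from row $r+1$ downwards, each of which is $\geq r+1$. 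The weight increases by $(n^r)$, since each inner tableau contributes one copy of each of $1,\ldots,r$. Preservation of the outer semistandard order on inner tableaux follows from the paper's reading-word convention: the prepended prefix $1,2,\ldots,r$ is common to any two inner tableaux being compared, so their relative order is unchanged.

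Iterating~$\Phi$ gives injections at every level $N \to N+1$. To prove the stability claim I would show~$\Phi$ is surjective once $N$ meets the threshold; equivalently, that every plethystic SSYT of inner shape $\mu + (N+1)(1^r)$ and weight $\lambda + (N+1)(n^r)$ has first column $(1,2,\ldots,r)^{\mathsf{T}}$ in each inner tableau. Suppose for contradiction that some inner tableau~$S$ has column-$1$, row-$k$ entry exceeding~$k$ for some minimal $k\leq r$. Column strictness forces every entry of~$S$ in rows $\geq k$ to exceed~$k$, so entries of~$S$ that are at most~$k$ lie only in rows $1,\ldots,k-1$, of total capacity $\mu_1+\cdots+\mu_{k-1}+(k-1)(N+1)$. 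The remaining $n-1$ inner tableaux each admit at most $\mu_1+\cdots+\mu_k+k(N+1)$ entries $\leq k$, while the total count of entries $\leq k$ across all inner tableaux is exactly $\lambda_1+\cdots+\lambda_k+kn(N+1)$. Comparing yields
\[
N+1 \leq n(\mu_1+\cdots+\mu_{k-1}) + (n-1)\mu_k - (\lambda_1+\cdots+\lambda_k),
\]
which the stated threshold excludes. The extra $\mu_{r+1}$ summand in the threshold accounts for the separate partition-validity requirement $\mu_r + N \geq \mu_{r+1}$, needed so that $\Phi$ can be iterated back to the base.

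The main obstacle is the surjectivity step. The counting above is elementary once one thinks to measure entries bounded by~$k$ against the capacity of rows $1,\ldots,k$ across all $n$ inner tableaux simultaneously, but one has to rule out deviation from $(1,2,\ldots,r)^{\mathsf{T}}$ at every position $k \leq r$, with the worst case determining the bound. A secondary subtlety is verifying that~$\Phi$ respects whichever total order on $\mu$-tableaux is used in \S\ref{sec:background} to govern outer semistandardness; this should reduce to the observation that prepending a common prefix to two reading words does not alter their lexicographic comparison.
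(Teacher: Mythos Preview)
Your proposal rests on a false premise: it is \emph{not} true that $\langle s_\nu \circ s_\mu, s_\lambda \rangle$ counts plethystic semistandard tableaux of shape $\mu^\nu$ and weight $\lambda$. By Lemma~\ref{lemma:numubasis} and Proposition~\ref{prop:nablaT}(i), those tableaux index a basis of the $\lambda$-weight space of $\nabla^\nu\bigl(\nabla^\mu(E)\bigr)$, so their number is the coefficient of $x^\lambda$ in the formal character, namely $\sum_\kappa K_{\kappa\lambda}\,\langle s_\nu \circ s_\mu, s_\kappa\rangle$. This equals the Schur multiplicity only when $\lambda$ is a maximal weight (Theorem~\ref{thm:maxls}); in general it is strictly larger. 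Your injection $\Phi$ would therefore prove only a monomial-wise inequality of formal characters, and since the inverse Kostka matrix has negative entries, that does not imply the Schur-coefficient inequality. Concretely, $s_{(2,1)} = m_{(2,1)} + 2m_{(1,1,1)}$ is monomial-dominated by $s_{(3)} + s_{(1,1,1)} = m_{(3)} + m_{(2,1)} + 2m_{(1,1,1)}$, yet $\langle s_{(2,1)}, s_{(2,1)}\rangle = 1 > 0 = \langle s_{(3)}+s_{(1,1,1)}, s_{(2,1)}\rangle$.

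The paper uses essentially your column-insertion map, but the decisive extra step is Proposition~\ref{prop:Brioninjection}: one must show that the \emph{linear} extension $v \mapsto v^\star$ carries highest-weight vectors to highest-weight vectors. This is not automatic, because the image $F(T^\star)$ need not lie in the canonical basis (the inserted column can break row-semistandardness of the inner tableaux), so one must straighten and check that the relations $X^{(c)}\cdot v = 0$ survive; this is the content of Lemma~\ref{lemma:technicalhat} on transported snake relations together with the injectivity argument in Lemma~\ref{lemma:injective}. There is also a shape issue with your construction: when $r < \ell(\mu)$, prepending a column of length $r$ at the far left does not yield the Young diagram $[\mu+(1^r)]$, and your remark about the new first column ``continuing with the old column-$1$ entries from row $r+1$'' describes a column of length $\ell(\mu)$, giving the wrong shape $\mu+(1^{\ell(\mu)})$. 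The paper inserts the new column at position $e = \mu_{r+1}+1$ precisely to avoid this. Your pigeonhole bound for stability is close to Lemma~\ref{lemma:saturated}, but note that the paper's argument pinpoints the failure at row $r$ (not at a minimal $k\le r$) and uses the column position $e$ rather than column $1$, which is where the $\mu_{r+1}$ term genuinely enters; it is not a separate ``partition-validity'' correction.
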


In particular, $\langle s_\nu \circ s_{\mu +  (1^r)}, s_{\lambda + (n^r)} \rangle = \langle s_\nu \circ s_\mu, s_\lambda \rangle$
if $r \ge \ell(\mu)$ and $r \ge \ell(\lambda)$.
We give an upper bound for the stable multiplicity at the end of \S\ref{sec:Brion}.

%\begin{theorem}\label{Brion}
%%Let $\mu \in \Par(m)$, $\nu \in \Par(n)$ and $\lambda \in \Par(mn)$. 
%If $\rho$
%is a partition then
%\[ \langle s_\nu \circ s_\mu, s_\lambda \rangle \le \langle s_\nu \circ s_{\mu + N \rho},
%s_{\lambda + nN\rho} \rangle \]
%for each $N \in \N_0$. Moreover $\langle s_\nu \circ s_{\mu + N \rho}, s_{\lambda + nN\rho} \rangle$
%is constant for $N \ge n \sum_{i=1}^{r+1} \mu_i$.
%\end{theorem}

%\begin{theorem}\label{thm:Briongl}
%Let $E$ be a complex vector space.
%Let $\mu \in \Par(m)$, $\nu \in \Par(n)$ and $\lambda \in \Par(mn)$. If $\rho$
%is a partition then
%\[ [\nabla^\nu \nabla^\mu E : \nabla^\lambda E] \le [\nabla^\nu \nabla^{\mu+N\rho} E :
%\nabla^{\lambda + nN\rho}] \]
%for each $N \in \N_0$.
%Moreover $[\nabla^\nu \nabla^{\mu+N\rho} E : \nabla^{\lambda + nN\rho}]$ is
%constant for $N \ge {\bf to do}$.
%\end{theorem}

\begin{theorem}\label{thm:Ikenmeyer}
If $n^\star \in \N$, $\lambda^\star \in \Par(mn^\star)$ and
$\langle s_{(n^\star)} \circ s_\mu, s_{\lambda^\star}\rangle \ge 1$ then
%Let $\mu \in \Par(m)$, let $n$, $n^\star \in \N_0$ 
%and let $\lambda \in \Par(mn)$, $\lambda^\star \in \Par(mn^\star)$. Then
\[ \langle
s_{(n + n^\star)} \circ s_\mu, s_{\lambda + \lambda^\star} \rangle
\ge  \langle s_{(n)} \circ s_\mu, s_\lambda \rangle
. \]
\end{theorem}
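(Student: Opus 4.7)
Since $\nu = (n)$ is a single row, the representation corresponding to $s_{(n)} \circ s_\mu$ is the symmetric power $\mathrm{Sym}^n(\nabla^\mu(E))$, and similarly for $s_{(n^\star)} \circ s_\mu$ and $s_{(n+n^\star)} \circ s_\mu$. Setting $W = \nabla^\mu(E)$, the plan is to work inside the graded $\GL(E)$-equivariant algebra
\[ S = \mathrm{Sym}(W) = \bigoplus_{k \ge 0} \mathrm{Sym}^k(W), \]
which, being a polynomial ring on any basis of $W$, is an integral domain.

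By the framework of \S\ref{sec:background}, the multiplicity $\langle s_{(k)} \circ s_\mu, s_\alpha \rangle$ equals the dimension of the space of highest weight vectors of weight $\alpha$ in $\mathrm{Sym}^k(W)$, i.e.\ weight vectors annihilated by the strictly upper triangular part of $\gl(E)$. Write $k = \langle s_{(n)} \circ s_\mu, s_\lambda \rangle$ and fix a basis $v_1, \ldots, v_k$ of the highest weight space of weight $\lambda$ in $\mathrm{Sym}^n(W)$. By hypothesis we may also fix a nonzero highest weight vector $w \in \mathrm{Sym}^{n^\star}(W)$ of weight $\lambda^\star$.

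Since the raising operators act on $S$ as derivations and the diagonal torus acts multiplicatively on weights, each product $v_i w \in \mathrm{Sym}^{n+n^\star}(W)$ is again a highest weight vector, now of weight $\lambda + \lambda^\star$. Multiplication by the nonzero element $w$ in the domain $S$ is injective, so $v_1 w, \ldots, v_k w$ remain linearly independent. This exhibits at least $k$ linearly independent highest weight vectors of weight $\lambda + \lambda^\star$ in $\mathrm{Sym}^{n+n^\star}(W)$, giving the asserted inequality.

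The only real obstacle is making sure the abstract count of highest weight vectors aligns with the plethystic-tableau basis developed earlier in the paper, so that the vectors we produce genuinely contribute to the multiplicity $\langle s_{(n+n^\star)} \circ s_\mu, s_{\lambda + \lambda^\star} \rangle$; this identification is precisely what the explicit polynomial representation of \S\ref{sec:background} delivers. A more combinatorial route -- concatenating a plethystic semistandard tableau for $\lambda$ with a fixed tableau for $\lambda^\star$ and checking that semistandardness is preserved -- would sidestep the appeal to $S$ being a domain, but the algebraic argument is cleaner and avoids any case analysis on the entries of $\mu$.
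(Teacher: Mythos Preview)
Your proof is correct and follows essentially the same route as the paper: pick linearly independent highest-weight vectors $v_1,\ldots,v_k$ of weight $\lambda$ in $\Sym^n(\nabla^\mu E)$, a highest-weight vector $w$ of weight $\lambda^\star$ in $\Sym^{n^\star}(\nabla^\mu E)$, and observe that the products $v_i w$ in the polynomial algebra $\Sym(\nabla^\mu E)$ are linearly independent highest-weight vectors of weight $\lambda+\lambda^\star$. Your final paragraph's worry is unnecessary: the equality between the plethysm coefficient and the dimension of the highest-weight space follows directly from Proposition~\ref{prop:hw} and does not depend on the plethystic-tableau basis at all.
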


%\begin{theorem}\label{thm:Ikenmeyergl}
%Let $E$ be a complex vector space. Let $\mu \in \Par(m)$, let $n$, $n^\star \in \N_0$ 
%and let $\lambda \in \Par(mn)$, $\lambda^\star \in \Par(mn^\star)$. Then
%\[ [\Sym^n \nabla^\mu E : \nabla^\lambda E] \le 
%[\Sym^{n + n^\star} \nabla^\mu E : \nabla^{\lambda + \lambda^\star} E]. \]
%\end{theorem}

For our final theorem we need some further combinatorial definitions.
The dominance order
on partitions is the partial order
defined  by $\lambda \unrhd \mu$ if $\lambda_1 + \cdots + \lambda_i \ge \mu_1 + \cdots + \mu_i$
for all $i \in \N$.
Semistandard tableaux, with entries from an arbitrary totally ordered set, 
are defined in \S\ref{subsec:pts} below. In particular, $\SSYT_\N(\mu)$ denotes
the set of semistandard $\mu$-tableaux with entries from $\N$.
We order $\SSYT_\N(\mu)$ by
the total order defined in Definition~\ref{defn:totalorder}.
%In (iii) note that compositions may have zero parts.

\begin{definition}{\ }\label{defn:pssyt}
\begin{defnlist}
%\item A $\mu$-tableau is a function from the Young diagram of $\mu$ to $\N_0$.
%If box $(i,j)$ is sent to $b \in \N_0$, we say that $b$ is the \emph{entry} of $t$
%in position~$(i,j)$.

%\item A tableau  is \emph{semistandard} if its rows are 
%strictly increasing and its columns are weakly increasing.

\item A \emph{plethystic semistandard tableau} of \emph{shape} $\mu^\nu$ is
a semistandard $\nu$-tableau whose entries are tableaux in $\SSYT_\N(\mu)$.

\item Let $T$ be a plethystic semistandard tableau and let $M$ be the greatest 
entry of the tableau entries of $T$.
The \emph{weight} of $T$,  denoted $\wt(T)$, is 
the composition $(\beta_1,\ldots,\beta_M)$
such that for each $b$, the total number of occurrences of~$b$
in the tableau entries of $T$ is $\beta_b$.
\end{defnlist}
\end{definition}

These objects are illustrated in Example~\ref{ex:ex}.

\begin{theorem}\label{thm:maxls}
%Let $\mu \in \Par(m)$, $\nu \in \Par(n)$ and $\lambda \in \Par(mn)$. 
The maximal
partitions $\lambda$ in the dominance order
such that $s_\lambda$ is a constituent of
$s_\nu \circ s_\mu$ are precisely the maximal weights
of the plethystic semistandard tableaux of shape $\mu^{\nu}$.
Moreover if $\lambda$ is such a maximal partition then
$\langle s_\nu \circ s_\mu, s_\lambda \rangle$ is the number of plethystic
semistandard tableaux of shape~$\mu^\nu$ and weight $\lambda$.
\end{theorem}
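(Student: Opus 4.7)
The plan is to leverage the explicit plethystic tableau basis of the polynomial $\GL(E)$-module $\nabla^\nu(\nabla^\mu(E))$ constructed earlier in the paper. Since each basis tableau $T$ of shape $\mu^\nu$ is a torus weight vector of weight $\wt(T)$, the Schur function $s_\nu \circ s_\mu$, regarded as the formal character of this module, admits the monomial expansion
\[
s_\nu \circ s_\mu \;=\; \sum_{T} x^{\wt(T)},
\]
the sum running over all plethystic semistandard tableaux of shape $\mu^\nu$. The theorem then becomes a purely combinatorial statement about maximal elements, in the dominance order, of the monomial support of this character.

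To extract Schur multiplicities from monomial multiplicities I would invoke the unitriangularity of Kostka numbers. Writing $s_\nu \circ s_\mu = \sum_\kappa c_\kappa s_\kappa$ with $c_\kappa = \langle s_\nu \circ s_\mu, s_\kappa\rangle$, and using $s_\kappa = \sum_\alpha K_{\kappa\alpha} x^\alpha$ with $K_{\kappa\alpha}$ vanishing unless $\kappa \succeq \alpha$ in dominance and $K_{\kappa\kappa}=1$, the monomial support of $s_\nu \circ s_\mu$ is $\bigcup_{c_\kappa>0}\{\alpha : \kappa \succeq \alpha\}$. Because $s_\nu\circ s_\mu$ is a symmetric function, its support is permutation-invariant; and any composition that is not already a partition is strictly dominated by its decreasing rearrangement (by the usual swap argument on partial sums). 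Hence the maximal elements of the support are automatically partitions, and comparison with the tableau expansion shows that these maximal partitions coincide with the maximal values of $\wt(T)$ and with the maximal $\lambda$ having $c_\lambda>0$. This establishes the first assertion of the theorem.

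For the multiplicity, let $\lambda$ be such a maximal partition. The coefficient of $x^\lambda$ in $\sum_\kappa c_\kappa s_\kappa$ receives contributions only from $\kappa$ with $\kappa\succeq \lambda$ and $c_\kappa>0$; maximality of $\lambda$ forces $\kappa=\lambda$, contributing $K_{\lambda\lambda}\,c_\lambda = c_\lambda$. Equating with the tableau expansion gives
\[
\langle s_\nu \circ s_\mu, s_\lambda\rangle \;=\; \bigl|\bigl\{T : \wt(T) = \lambda\bigr\}\bigr|,
\]
which is the second assertion.

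The only substantive input is the construction of the plethystic tableau basis and the identification of its tableau weights with torus weights, both of which are developed in earlier sections. The remainder is a standard unitriangularity argument; the main conceptual point, and the reason no deeper highest-weight analysis is required, is that a weight which is maximal in the dominance order among those of a polynomial $\GL(E)$-module is automatically a partition and a highest weight, and its multiplicity space is the entire weight space.
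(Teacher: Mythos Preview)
Your proof is correct and takes a genuinely different route from the paper's. The paper works inside the representation $\nabla^\nu(\nabla^\mu(E))$ and shows directly, via the $\gl(E)$-action and straightening (Proposition~\ref{prop:maximalTableauImpliesHW}), that each $F(T)$ with $T$ of maximal weight is annihilated by every $X^{(c)}$ and hence is a highest-weight vector; together with Lemma~\ref{lemma:constituentImpliesTableau} this gives both directions, and the multiplicity statement follows because the $F(T)$ of weight $\lambda$ are then a basis of highest-weight vectors. Your argument bypasses all of this: once the character identity $s_\nu \circ s_\mu = \sum_T x^{\wt(T)}$ is in hand (which, as in the proof of Proposition~\ref{prop:compositionPlethysm}, follows from the combinatorial definition of plethysm alone), the Kostka unitriangularity $s_\kappa = m_\kappa + \sum_{\alpha \lhd \kappa} K_{\kappa\alpha}\, m_\alpha$ immediately yields both assertions. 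This is the standard observation that for any Schur-positive symmetric function the dominance-maximal monomials detect exactly the dominance-maximal Schur constituents, with matching multiplicities. Your route is shorter and needs none of the $X^{(c)}$-analysis or straightening; what the paper's route buys is explicit highest-weight vectors $F(T)$, in keeping with its goal of building a concrete model, and a proof in the same framework used for Theorems~\ref{thm:BCV-N}--\ref{thm:Ikenmeyer}. Note also that your argument gives Corollary~\ref{cor:partitionWeight} for free via the ``swap'' step, whereas the paper deduces it from the representation theory.
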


%\begin{theorem}\label{thm:maxlsgl}
%Let $E$ be a $d$-dimensional complex vector space.
%Let $\mu \in \Par(m)$, $\nu \in \Par(n)$ and $\lambda \in \Par(mn)$. The maximal
%partitions $\lambda$ such that $\nabla^\lambda E$ is a constituent of 
%$\nabla^\nu \nabla^\mu E$ are precisely the maximal weights 
%of the semistandard tableau family tuples of shape $\mu^{\nu'}$.
%\end{theorem}

%While not logically essential, in practice it is very useful to know that
%a tableau family of maximal weight satisfies the closure condition in Definition~\ref{defn:closure}
%below.

Applying the sign twist in Lemma~\ref{lemma:signtwist} to the main theorems
gives equivalent results that are also noteworthy. In particular, Theorem~\ref{thm:BCV-N} 
implies that if $r \ge \ell(\mu)$ %, where $\ell(\mu)$ is the number of parts of $\mu$, 
then
\begin{equation}\label{eq:BCV-Ntwisted}
\langle s_{\kappa} \circ s_{\mu+(1^r)},
s_{\lambda + (1^{nr})} \rangle
= \langle s_\nu \circ s_\mu, s_\lambda \rangle,
\end{equation}
where $\kappa = \nu$ if $r$ is even and $\kappa = \nu'$ if $r$ is odd.
The sign-twist of Theorem~\ref{thm:maxls} characterizes the minimal partitions $\lambda$
such that $s_\lambda$ appears in a general plethysm $s_\nu \circ s_\mu$.

In the setting of polynomial representations of general linear groups,
the Schur function $s_\lambda$ corresponds to the Schur functor $\nabla^\lambda$. We develop this background
in~\S\ref{sec:background}.
In~\S\ref{sec:model} we construct an explicit model for the module $\nabla^\nu \bigl(\nabla^\mu (E) \bigr)$, where~$E$
is a complex vector space. By Proposition~\ref{prop:compositionPlethysm}, the formal character of this module is
$(s_\nu \circ s_\mu)(x_1,\ldots, x_d)$, where $d = \dim E$. Using this model we prove
Theorems~%\ref{thm:Manivel},~
\ref{thm:BCV-N},~\ref{thm:Brion},~\ref{thm:Ikenmeyer},~\ref{thm:maxls} 
in \S\ref{sec:BCV-N}, \S\ref{sec:Brion}, \S\ref{sec:Ikenmeyer} and~\S\ref{sec:maxls}, respectively.
%OMIT?
%As a warm up, in \S\ref{sec:tworow} we take \hbox{$\dim E = 2$} and classify the partitions $\nu \in \Par(n)$ 
%and $(m-k,k) \in \Par(m)$ such that $\nabla^\nu
%\bigl(  \nabla^{(m-k,k)}(E) \bigr)$ is irreducible. We also
%generalize an earlier result of Manivel \cite{Manivel}
%to obtain a family of new plethysm identities, typified by $(s_{(6,5,1)} \circ s_{(\ell)})(q^{-1},q)
%= (s_{(6,5,1)'} \circ s_{(\ell+3)})(q^{-1}, q)$ for $\ell \in \N_0$.

%En route to Proposition~\ref{prop:Manivel}, we prove Proposition~\ref{prop:tworow}, which d

\begin{example}\label{ex:ex}
As an illustration of our  main theorems, 
we determine the plethysm $s_{(3)} \circ s_{(3)}$. By Lemma~\ref{lemma:signtwist},
its sign twist is
%(See \S\ref{subsec:plethysm} for the definition of Schur functions and plethysm.)
%First we apply Theorem~\ref{thm:maxls} to its sign twist,
 $s_{(1^3)} \circ s_{(1^3)}$. Using the closure condition in Definition~\ref{defn:closureAlt}
 it is easy to show that there are two plethystic semistandard
tableaux of shape $(1^3)^{(1^3)}$ and maximal weight,
as shown in margin. The weights are $(3,3,1,1,1)$ and $(3,2,2,2)$ respectively.% 
\begin{marginpar}{\vskip-48pt\scalebox{0.9}{\triplevtableau{\young(1,2,3)}{\young(1,2,4)}{\young(1,2,5)}}\raisebox{1in}{\,,} \ 
\scalebox{0.9}{\triplevtableau{\young(1,2,3)}{\young(1,2,4)}{\young(1,3,4)}}} \end{marginpar}
%\vspace{-18pt}
%\[ \scalebox{0.85}{\triplevtableau{\young(1,2,3)}{\young(1,2,4)}{\young(1,2,5)}}\raisebox{0.9in}{\,,} \quad
%\scalebox{0.85}{\triplevtableau{\young(1,2,3)}{\young(1,2,4)}{\young(1,3,4)}}
% \]
Hence, by Theorem~\ref{thm:maxls} and Lemma~\ref{lemma:signtwist},  
\[ \smash{\langle s_{(3)} \circ s_{(3)}, s_{(5,2,2)} \rangle
= \langle s_{(3)} \circ s_{(3)}, s_{(4,4,1)} \rangle = 1}. \] 
%The unique semistandard
%tableau family of shape $(3)^(1,1,1)$ and maximal weight is 
%\[ \bigl( \{ \young(111)\, \}, \{ \young(111)\, \}, \{ \young(111)\, \} \bigr) \]
%so $\langle s_{(3)} \circ s_{(3)}, s_{(9)} \rangle = 1$. 
Since $s_{(1^3)} \circ s_{\varnothing} = s_{\varnothing}$, it follows from~\eqref{eq:BCV-Ntwisted},
applied with $r=3$, that
$\langle s_{(1^3)} \circ s_{(1^3)}, s_{(1^9)}\rangle = 1$, and so 
$\langle s_{(3)} \circ s_{(3)}, s_{(9)} \rangle = 1$. (This also follows from Theorem~\ref{thm:maxls}, since
the unique plethystic semistandard tableau of shape $(3)^{(3)}$ and maximal
weight~is
\[ 
\tripletableau{\young(111)}{\young(111)}{\young(111)}
\]
and can, of course, be seen in many other ways.)

By Example~\ref{ex:hwFoulkes}, $s_{(3)} \circ s_{(2)} = s_{(6)} + s_{(4,2)} + s_{(2,2,2)}$.
By the final part of Theorem~\ref{thm:Brion}, applied to the second summand, 
$\langle s_{(3)} \circ s_{(2+N)}, s_{(4+3N,2)} \rangle$ is constant for $N \ge 0$,
hence $\langle s_{(3)} \circ s_{(3)}, s_{(7,2)} \rangle = 0$.
Moreover, by the Cayley--Sylvester formula (see for instance \cite[Exercise 6.18, solution]{FultonHarrisReps}), 
$\langle s_{(3)} \circ s_{(3)}, s_{(6,3)}\rangle$ is the number of partitions
of $3$ contained in the $3 \times 3$ box, minus the number of partitions of $2$ satisfying
the same restriction. Therefore $\langle s_{(3)} \circ s_{(3)}, s_{(6,3)}\rangle = 1$.
%%OMIT?
%Moreover, by Proposition~\ref{prop:tworowmult}%(used in the proof of Proposition~\ref{prop:Manivel})
%, $\langle s_{(3)} \circ s_{(3)}, s_{(6,3)}\rangle = 1$.
Hence %there exists $a \in \N$ such that
\begin{equation}
\label{eq:s33} 
s_{(3)} \circ s_{(3)} = s_{(9)} + s_{(7,2)} + s_{(6,3)} + s_{(5,2,2)} + s_{(4,4,1)} + f 
\end{equation}
for some symmetric function $f$ with non-negative coefficients in the Schur basis. Under the
characteristic isometry (see \S\ref{subsec:plethysm}), $s_{(3)} \circ s_{(3)}$ is the image
of the permutation character of $\SymG_9$ acting on the set partitions of $\{1,\ldots, 9\}$ into
$3$ sets, each of size~$3$. This character has degree $9!/3!^33! = 280$, which equals the sum 
of the degrees of the irreducible characters of $\SymG_9$ 
labelled by the partitions appearing in~\eqref{eq:s33}. 
Hence $f=0$.

% and a lower bound
%of $1$ is given by both Theorems~\ref{thm:Brion} and~\ref{thm:Ikenmeyer}, starting
%with the obvious $s_{(1)} \circ s_{(3)} = s_{(3)}$
%(It is helpful to know
%that any tableau family of maximal weight satisfies the closure property in 
%Definition~\ref{defn:closed}.)

%The
%unique semistandard tableau family tuple of shape $(3)^{(1^3)}$ and weight $(9)$ is
%$\bigl( \{ \young(111)\,, \},  \young(111)\,, \}, \young(111)\,, \} \bigr)$, by
%Theorem~4, $\langle s_{(3)} \circ s_{(3)}, s_{(9)} \rangle = 1$, 
\end{example}

We now explain the antecedents of the main theorems, before
giving a more detailed outline of the strategy of our proofs.
For general background on symmetric functions, including the Young and Pieri rules,
we refer the reader to \cite[Ch.~7]{StanleyII} or \cite[Ch.~1]{MacDonald}. 
For more background on plethysms
see \cite{LoehrRemmel} and the introduction~to~\cite{PagetWildonGeneralizedFoulkes}. 

\bigskip

\begin{samepage}
\subsection*{Antecedents of the main theorems}

%\subsubsection*{Proposition~\ref{prop:Manivel}}
%
%Let $\dim E = 2$. In \cite{Manivel}, Manivel proved that the plethysm $\nabla^{(a^b)} \Sym^{c+b-1} (E)$
%is completely
%symmetric in $a$, $b$ and $c$, when the module action is restricted to $\SL(E)$. Since 
%and $\nabla^{(m-k,k)}(E) \cong \det^k \otimes \Sym^k(E)$ by~\eqref{lemma:tworow}, this restriction loses
%no essential information. Manivel's result implies that
%\[ \nabla^\nu \bigl(\Sym^{c + \ell(\nu)} (E) \bigr) \cong_{\SL(E)} \bigl( \nabla^{\nu'} \Sym^{c + \ell(\nu')} (E) \bigr) \]
%when $\nu$ is the rectangular partition $(a^b)$. Our Proposition~\ref{prop:Manivel} generalizes this
%to a much wider class of partitions; by Proposition~\ref{prop:ManivelClassA} these include all hook partitions.
%In turn, Manivel's Theorem generalized two earlier results.
%Taking \hbox{$a=1$} and swapping $a$ and $b$ 
%we get the Wronskian isomorphism (see for instance \cite[\S 2.5]{AbdesselamChipalkatti})
%\[ \bigwedge^n \bigl( \Sym^{(m+n-1)} (E) \bigr) \cong_{\SL(E)} \Sym^{(n)} \bigl( \Sym^{(m)} (E) \bigr). \]
%Taking $b=1$ and swapping $a$ and $c$
%we get the Cayley--Sylvester isomorphism
%\[ \Sym^{(n)} \bigl( \Sym^{(m)}(E) \bigr) \cong_{\SL(E)} \bigl( \Sym^{(m)}\Sym^{(n)} (E) \bigr); \]
%in this case the degrees coincide, so the isomorphism also holds with the action extended to $\GL(E)$.

\subsubsection*{Theorem~\ref{thm:BCV-N}}
Stated in the language of symmetric functions, 
%and regularising notation for consistency
%with our main theorems,
Proposition~1.16 of \cite{BCV} becomes
\begin{equation}\label{eq:BCV}\langle s_{\nu} 
\circ s_{(1^{m+1})}, s_{(n) \sqcup \lambda} \rangle
=  \langle s_{\nu} \circ s_{(1^m)}, s_{\lambda} \rangle 
, \end{equation}
provided that $n$ is at least the first part of $\lambda$.
% \ge \ell(\lambda)$. %, where $\ell(\lambda)$ is the number of parts of $\lambda$. 
% is at least the number of parts of $\lambda$.
By Remark~\ref{remark:BCVnz}, both sides in Theorem~\ref{thm:BCV-N} are zero
if $\lambda_1 > nr$. Therefore this proposition is equivalent to the case
$\mu = (1^m)$ and $r=1$ of Theorem~\ref{thm:BCV-N}. 
(The statement in \cite{BCV} replaces $ (n) \sqcup \lambda$ with its conjugate partition
$\lambda' + (1^n)$; the conjugation arises because
the functor $L_\lambda$ in \cite{BCV} is our~$\nabla^{\lambda'}$.)
%Let $B$ denote the subgroup of upper-triangular matrices in $\GL(E)$.
The proof in \cite{BCV}
 gives a bijection between the invariants 
in $\bigotimes^n (\bigwedge^m E)$ and $\bigotimes^n (\bigwedge^{m+1} E)$
for a Borel subgroup of $\GL(E)$.
%A $B$-invariant vector
%is a highest-weight vector when the action is extended to $\GL(E)$. 
Our
proof establishes a corresponding bijection between highest-weight vectors, with~$\bigwedge^m$
replaced with an arbitrary Schur functor.
We remark after this proof on the connection with the later
proof of~\eqref{eq:BCV} given in \cite[Lemma 3.2]{KahleMichalek}.
\end{samepage}

Specializing~\eqref{eq:BCV} by taking $\nu = (n)$ or $\nu=(1^n)$ gives
two results first proved in~\cite{Newell}. Since Newell's paper is not easy to read,
we explain the connection.
Taking the inner product of both sides of Theorem~\ref{thm:BCV-N} of \cite{Newell} with $s_\lambda$,
where $\lambda \in \Par(mn-k)$, gives
\begin{equation}\label{eq:Newell} 
 \bigl\langle (s_{(1^k)} \circ
s_{(m-1)})(s_{(n-k)} \circ s_{(m)}), s_\lambda \bigr\rangle
=\bigl\langle s_{(n)} \circ s_{(m)}, s_{(1^k)} s_\lambda \bigr\rangle
. \end{equation}
(This corrects a typographical error in \cite{Newell}: as can be seen
from the correctly stated and analogous Theorem 1A, $g_{(1^k)\xi\nu} \{\nu\}$
should be $g_{(1^k)\xi\nu} \{\xi\}$; note that, by definition,
$g_{(1^k)\xi\nu} = \langle s_{(1^k)} s_\xi , s_\nu \rangle$.) 
Taking $k=n$ we obtain 
$\langle s_{(1^n)} \circ s_{(m-1)}, s_\lambda \rangle
= \langle s_{(n)} \circ s_{(m)}, s_{(1^n)} s_\lambda \rangle$.
By Pieri's rule, $s_{(1^n)} s_\lambda$ is the sum of all the Schur functions labelled by partitions
obtained from $\lambda$ by adding~$n$ boxes, no two in the same row.
On the other hand, by Young's rule, $s_{(n)}$ is a summand of $s_{(1)}^n$, so the plethysm
$s_{(n)} \circ s_{(m)}$ is contained in $s_{(1)}^n \circ s_{(m)} = (s_{(1)}\circ s_{(m)})^{n} 
= s_{(m)}^n$. Another application of Young's rule now shows that
if $s_\lambda$ is a constituent of $s_{(n)} \circ s_{(m)}$ then $\ell(\lambda) \le n$. Hence
\begin{equation} \label{eq:Newell1}
 \langle s_{(1^n)} \circ
s_{(m-1)}, s_\lambda \rangle= \langle s_{(n)} \circ s_{(m)}, s_{\lambda + (1^n)}\rangle.
\end{equation} Similarly
Theorem 1A in \cite{Newell} implies that
\begin{equation} \label{eq:Newell2}
 \langle s_{(n)} \circ
s_{(m-1)}, s_\lambda \rangle
= \langle s_{(1^n)} \circ s_{(m)}, s_{\lambda + (1^n)} \rangle.
\end{equation}
%Newell's result is stated, essentially in the form, as (2.4a) in
%\cite{Weintraub}; (2.4b) gives Theorem 1A in \cite{Newell}, obtained from 
These are the special case of the equivalent form of Theorem~\ref{thm:BCV-N}
stated in~\eqref{eq:BCV-Ntwisted} when $\mu = (1^m)$, $r = 1$
and $\nu = (n)$ or $\nu = (1^n)$. 

%Newell's proof is based on Littlewood's `third method' for decomposing plethysms: see 
%\cite[page 349]{LittlewoodInvariantTheoryTensorsAndGroupCharacters1944}. 
%As an illustration, computational data
%shows that whenever $n \le 8$, there is a unique symmetric function $f$ of degree $3n$ such
%that \hbox{$\langle f, s_{(3n)}\rangle = 1$} and $\langle f, s_{(1)}s_\lambda \rangle = 
%\langle (s_{(n-1)} \circ s_{(3)})s_{(2)}, s_\lambda \rangle$
%for all partitions $\lambda$ of \hbox{$3n-1$}.
% (This claim may be verified
%using the Haskell \cite{Haskell98} program \texttt{UniqueRestriction.hs}, available from the 
%third author's website.\footnote{See \url{www.ma.rhul.ac.uk/~uvah099/}.})
%This gives a way to decompose the plethysms $s_{(n)} \circ s_{(3)}$ that
%extends Littlewood's calculation in 
%\cite[pages 350--352]{LittlewoodInvariantTheoryTensorsAndGroupCharacters1944} 
%for $n \le 6$. By Theorem~2.7 in \cite{Weintraub}
%the analogous result for $s_{(3)} \circ s_{(m)}$ holds for all $m \in \N$.
%The short proof of the well known identity $s_{(n)} \circ s_{(2)} = \sum_{\lambda \in \Par(n)}
%s_{2\lambda}$ in \cite[Lemma~1]{IRS} 
%may be seen as another successful application of Littlewood's third method (rephrased
%in terms of characters of symmetric groups). However,
%Littlewood's method is ineffective for $s_{(n)} \circ s_{(m)}$ when $m \ge 4$. [{\bf Cut?}]
%

%A modern statement of~\eqref{eq:Newell1} and~\eqref{eq:Newell2} 
%is given in \cite[2.4a, 2.4b]{Weintraub}.

It follows from our Theorem 1, or by combining~\eqref{eq:Newell1} and~\eqref{eq:Newell2},
that $\langle s_{(n)} \circ s_{(m+2)}, s_{\lambda + (2^n)} \rangle = \langle s_{(n)} \circ s_{(m)},
s_\lambda \rangle$ for all $\lambda \in \Par(mn)$. This was proved by 
Dent in \cite[%page 36, 
Theorem 3.8]{DentThesis} using the symmetric group.

\subsubsection*{Theorem~\ref{thm:Brion}}
The special case of part (i)  of the theorem on page 354 of \cite{Brion}
when~$G$ is $\GL(E)$ asserts that if $\rho$ is any partition then
%Taking the reductive algebraic group $G$ in part (i) of the theorem on page 354 of \cite{Brion}
%to be $\GL(E)$, this theorem
$\langle s_\nu \circ s_{\mu + N\rho}, s_{\lambda + Nn\rho} \rangle$
is a non-decreasing function of $N \in \N_0$. Part (ii) gives a condition in terms
of simple roots for the values
to stabilise: in the special case of $\GL(E)$, it becomes %the combinatorial condition
\[ \mu_i - \mu_{i+1} + 
N(\rho_i - \rho_{i+1}) \ge n(\mu_1 + \cdots + \mu_i) - (\lambda_1 + \cdots + \lambda_i) \]
for every $i$ such that $\rho_i > \rho_{i+1}$, as stated in \cite[page 362, Corollary 1]{Brion}. 
Part~(iii) gives a technical formula for the stable multiplicity.

Our Theorem~\ref{thm:Brion} is Brion's Theorem in the case $\rho = (1^r)$.
Brion's theorem for a general partition $\rho$ follows by
repeatedly applying our theorem to each column of $\rho$ in turn.
Brion's proof uses
$\GL(E)$-invariant vector bundles on the Grassmannian variety of full flags in $E$ and the long exact cohomology sequence.
The more elementary proof given here, which leads to a combinatorial
upper bound on the stable multiplicity (see Proposition~\ref{prop:stable}), is therefore of  interest.
% It therefore seems well worth giving a more elementary proof.

Taking $\nu = (n)$, $\mu = (m)$ and $r=1$ in Theorem~\ref{thm:Brion} we obtain
$\langle s_{(n)} \circ s_{(m)}, s_\lambda\rangle \le \langle s_{(n)} \circ s_{(m+1)}, s_{\lambda+(n)}
\rangle$. This is Foulkes' Second Conjecture, stated as a working hypothesis 
at the end of \S 1 of \cite{Foulkes}, and proved by Brion in~\cite{Brion}.

\subsubsection*{Theorem~\ref{thm:Ikenmeyer}}
Proposition 4.3.4 of \cite{IkenmeyerThesis} is equivalent to the special case
of Theorem~\ref{thm:Ikenmeyer} when $\mu = (m)$. The proof in \cite{IkenmeyerThesis}
uses polynomial representations of $\GL(E)$, where $E$ is a complex vector space.
The key idea is to multiply highest-weight vectors in $\Sym^n (\Sym^m E)$ and
$\Sym^{\nstar}\! (\Sym^m E)$ to get a highest-weight vector in $\Sym^{n+\nstar} (\Sym^m E)$.
This generalizes  to prove Theorem~\ref{thm:Ikenmeyer}.
To motivate this proof we digress briefly 
to illustrate the geometric interpretation of this multiplication,
basing our discussion on the examples in \cite[\S 11.3]{FultonHarrisReps}. 
(This example is not logically essential.)

\begin{example}\label{ex:geometricFoulkes}
Let $E$ have basis $\u_1, \ldots, \u_d$.
Then $\Sym^2\! E$ has basis 
\[ \{ \u_i^2 : 1 \le i \le d \} \cup \{ 2\u_i\u_j : 1 \le i < j \le d\}. \]
Let $X_{ii} = (\u_i^2)^\star$ and $X_{ij} = (2\u_i\u_j)^\star$ be the corresponding elements
of the dual space $(\Sym^2 \! E)^\star$, regarded as a polynomial representation of~$\GL(E)$ by
the contravariant duality 
in \cite[\S 2.7]{ErdmannGreenSchockerGLn}. (Thus if $\rho(g)$ is the matrix
representing~$g$ in its action on a polynomial representation~$V$ of $\GL(E)$ then
$\rho(g^{\mathrm{tr}})^{\mathrm{tr}}$ represents~$g$ in its action on~$V^\star$.)
Let~$\mathcal{C}$ be the image of $E$ under
the map $E \rightarrow \Sym^2 \!E$ defined by 
$v \mapsto v^2$, so
\[ \mathcal{C} = \Bigl\{ \sum_{i=1}^d \alpha_i^2 e_i^2 + \!\!\sum_{1 \le i < j \le d} \!\!2\alpha_i\alpha_j e_i e_j
\,:\, \alpha_1, \ldots, \alpha_d \in \C \Bigr\}. \]
Thinking of $\mathcal{C}$ as an affine variety contained in $\Sym^2 \!E$, we see
that the vanishing ideal of~$\mathcal{C}$ in the coordinate ring $\mathcal{O}(\Sym^2\! E)$
contains $X_{11}X_{22} - X_{12}^2$ in degree~$2$. This is a highest-weight vector
for $\GL(E)$ of weight $(2,2)$, so by Proposition~\ref{prop:hw}(ii) it generates a 
submodule of $\mathcal{O}(\Sym^2\! E)$  isomorphic to $\nabla^{(2,2)}(E)$.
This submodule is the kernel of the map $\mathcal{O}(\Sym^2\! E ) 
\rightarrow \mathcal{O}(E)$ induced by 
restricting a coordinate function
on $\Sym^2\! E$ to $\mathcal{C}$ 
and then pulling it back to~$E$ using the squaring map \hbox{$E \rightarrow \mathcal{C}$}.
Thus
taking generators for the coordinate ring $\mathcal{O}(E)$ so that
$\mathcal{O}(E) = \C[Y_1,\ldots, Y_d]$, we have $X_{ij} \mapsto Y_i Y _j $.
This
defines a  homomorphism of $\GL(E)$-modules $\Sym^2 \bigl( (\Sym^2 \!E)^\star
\bigr) \rightarrow \Sym^4 (E^\star)$
with kernel $\nabla^{(2,2)}(E)$. 
Since $X_{11}^2 \in \mathcal{O}(\Sym^2\! E)_2$ maps to $Y_1^4
\in \mathcal{O}(E)_4$,
which is highest-weight of weight~$(4)$, this $\GL(E)$-homomorphism is surjective.
Since all irreducible $\GL(E)$-modules are self-dual under contravariant duality, we obtain
\[ \Sym^2(\Sym^2 \! E) \cong \Sym^4 \! E \oplus \nabla^{(2,2)}(E).\] %, where
%$\Sym^4 E$ is generated by the square of the highest-weight vector $X_{11}$ of weight $2$.
Multiplying highest-weight vectors in the coordinate ring $\mathcal{O}(\Sym^2\! E)$, we see that for $r$, $s \in \N_0$, the product
$(X_{11}X_{22} - X_{12}^2)^r X_{11}^{s}$ vanishes on $\mathcal{C}$ with multiplicity $r$
and is highest-weight of weight $(2s+2r,2r)$. It follows that
\[ \Sym^n (\Sym^2 \!E) \cong \nabla^{2n}(E) \oplus \nabla^{(2n-2,2)}(E) \oplus \nabla^{(2n-4,4)}(E)
\oplus \cdots \oplus W \]
where if $\nabla^\lambda(E)$ appears in $W$ then $\ell(\lambda) \ge 3$. In particular,
if $d = 2$ then $W=0$, and every summand in the decomposition of $\Sym^n (\Sym^2 \!E)$ has
a simple geometric interpretation. The  decomposition for general $d$ is obtained in Example~\ref{ex:hwFoulkes}
below.
\end{example}

In \cite{deBoeck}, the first author constructed
explicit homomorphisms between modules for the symmetric group that prove
the special cases of Theorem~\ref{thm:Ikenmeyer} when $\mu = (1^m)$ and $n^\star = 1$
(and necessarily $\lambda^\star = (1^m)$) and,
subject to the conditions that $m$ is even and $\lambda$ has at most $2m$ parts, 
when $\mu = (1^m)$ and $\lambda^\star = (1^{m n^\star})$.

\subsubsection*{Theorem~\ref{thm:maxls}}
This theorem strengthens the main result of \cite{PagetWildonGeneralizedFoulkes}.
The proof in \cite{PagetWildonGeneralizedFoulkes} is entirely within the symmetric group, and
constructs an explicit homomorphism corresponding to each maximal partition $\lambda$
such that $s_\lambda$ appears in $s_\nu \circ s_\mu$. This requires a lengthy and quite
intricate argument, so again we believe that the shorter
proof presented here, which also gives a precise result on the multiplicity, is of interest.
As a corollary (see Corollary~\ref{cor:partitionWeight}) we show that if $T$ is a plethystic semistandard 
tableau of maximal weight for its shape, under the dominance order, then $\wt(T)$ is a partition. This fact
was mentioned in \cite{PagetWildonGeneralizedFoulkes}, where we noted that 
it has a non-trivial combinatorial proof using a 
variation on the Bender--Knuth involution  
(see \cite[page~47]{BenderKnuth}). 
%This proof has features of interest, so we have included it as
%an appendix to this paper. 

Although it is not logically essential to the proof of Theorem~\ref{thm:maxls}, 
it is often useful in calculations (as seen already in Example~\ref{ex:ex}) that
the $\mu$-tableau entries of a plethystic semistandard tableau of shape $\mu^{(1^n)}$ and
maximal weight satisfy the following closure condition. Recall that $\unrhd$ denotes
the dominance order on partitions, defined before Theorem~\ref{thm:maxls}.
%TO DO add dominance order, defining it

%\begin{definition}\label{defn:closure}
%\begin{itemize}
%\item[(i)]Let $X$ and $Y$ be finite subsets of $\N$ of size $k$. If the elements of $X$ and $Y$ can be
%ordered so that $x_1 < \ldots < x_k$ and $y_1 < \ldots < y_k$ and
%$x_i \le y_i$ for each $i$, then we say that $Y$ \emph{majorizes} $X$.
%\item[(ii)] Let $s$ and $t$ be semistandard $\mu$-tableaux with entries from $\N$. Let $\col_j(s)$ and
%$\col_j(t)$ be the sets of entries in column $j$ of $s$ and $t$, respectively.
%We say that $t$ \emph{majorizes} $s$, and write $s \preceq t$, if $\col_j(t)$ majorizes $\col_j(s)$ for each $j$.
%\item[(iii)] A set $\mathcal{T}$ of semistandard $\mu$-tableaux is \emph{closed} if whenever
%$t \in \mathcal{T}$ and $s$ is a semistandard $\mu$-tableau such that $s \preceq t$ then $s \in \mathcal{T}.
%\end{itemize}
%\end{definition}

\begin{definition}\label{defn:closureAlt}
Let $\mathcal{T}$ be a set of semistandard $\mu$-tableaux. We say that~$\mathcal{T}$ is \emph{closed}
if whenever $t \in \mathcal{T}$ and $s$ is a semistandard $\mu$-tableau obtained from~$t$ by
changing a single entry $c$ to $c-1$, then $s \in \mathcal{T}$.
\end{definition}

\begin{proposition}\label{prop:closed}
Let $T$ be a plethystic semistandard tableau of shape $\mu^{(1^n)}$. If the weight of $T$ is maximal
in the dominance order for its shape then the set of $\mu$-tableau entries of $T$ is closed.
\end{proposition}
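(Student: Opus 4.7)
The plan is to prove the contrapositive: assuming $\mathcal{T}$, the set of $\mu$-tableau entries of $T$, is not closed, I will produce a plethystic semistandard tableau $T'$ of shape $\mu^{(1^n)}$ whose weight strictly dominates $\wt(T)$. This contradicts the hypothesis that $\wt(T)$ is maximal in the dominance order among plethystic semistandard tableaux of shape $\mu^{(1^n)}$.

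By Definition~\ref{defn:closureAlt}, non-closedness of $\mathcal{T}$ gives some $t \in \mathcal{T}$ and some entry $c$ appearing in $t$ such that the tableau $s$ obtained from $t$ by changing this single occurrence of $c$ to $c-1$ is still a semistandard $\mu$-tableau, yet $s \notin \mathcal{T}$. Replace $t$ by $s$ in the column $T$, obtaining $n$ semistandard $\mu$-tableaux that are pairwise distinct precisely because $s \notin \mathcal{T}$. Sorting these $n$ tableaux in strictly increasing order under the total order of Definition~\ref{defn:totalorder} yields a plethystic semistandard tableau $T'$ of shape $\mu^{(1^n)}$.

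Writing $\wt(T) = (\beta_1, \ldots, \beta_M)$ and $\wt(T') = (\beta'_1, \ldots, \beta'_M)$, the multisets of tableau-entries differ only in that one occurrence of $c$ has been replaced by an occurrence of $c-1$. Hence $\beta'_{c-1} = \beta_{c-1} + 1$, $\beta'_c = \beta_c - 1$, and $\beta'_i = \beta_i$ for all other $i$. Comparing partial sums gives $\sum_{i=1}^{j} \beta'_i = \sum_{i=1}^{j} \beta_i$ for every $j \neq c-1$, while $\sum_{i=1}^{c-1} \beta'_i = \sum_{i=1}^{c-1} \beta_i + 1$. Thus $\wt(T')$ strictly dominates $\wt(T)$, yielding the desired contradiction.

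The only place requiring care is the verification that the altered column can be legitimately reassembled into a plethystic semistandard tableau of the prescribed shape; the hypothesis $s \notin \mathcal{T}$ is exactly what ensures the $n$ tableaux in $T'$ are distinct, so that the strict-increase condition along the column can be enforced after sorting. Everything else is bookkeeping on partial sums.
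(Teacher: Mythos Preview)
Your proof is correct and follows essentially the same approach as the paper: both argue by contrapositive, replace the offending tableau $t$ by $s$ in the column, reorder to obtain a valid plethystic semistandard tableau, and observe that the resulting weight strictly dominates $\wt(T)$ since one occurrence of $c$ has been traded for an occurrence of $c-1$. Your explicit remark that $s \notin \mathcal{T}$ is precisely what guarantees distinctness (and hence that the reordered column is genuinely semistandard of shape $(1^n)$) makes a point the paper leaves implicit.
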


\begin{proof}
Let $\mathcal{T}$ be the set of $\mu$-tableau entries of $T$. Suppose that $\mathcal{T}$ is not closed,
and let $t \in \mathcal{T}$, $s \not\in \mathcal{T}$ and $c$ be as in Definition~\ref{defn:closureAlt}.
Let $S$ be the  plethystic semistandard tableau of shape $\mu^{(1^n)}$ obtained from $T$
by deleting $t$ and inserting $s$, and then reordering (if necessary) the $\mu$-tableau entries
within the single column of~$S$
so that~$S$ is column-standard in the total order $<$ on semistandard $\mu$-tableau in Definition~\ref{defn:totalorder}.
Then $S$ is a plethystic semistandard tableau of shape $\mu^{(1^n)}$ and
\[  \wt(S)_b = \begin{cases} 
\wt(T)_b+1 & \text{if $b = c-1$} \\
\wt(T)_b-1 & \text{if $b = c$} \\
\wt(T)_b & \text{otherwise.} \end{cases} %\label{eq:weightUp}
\]
Therefore $\wt(S) \rhd \wt(T)$.
\end{proof}

%\begin{proof}
%Let $\mathcal{T}$ be the set of $\mu$-tableau entries of $T$. If $\mathcal{T}$ is not closed then 
%it follows easily from the definition of majorization that there
%exist a $\mu$-tableau $t \in \mathcal{T}$ and a box $(i,j) \in [\mu]$, say with $t_{(i,j)} = c$,
%such that changing the entry in $t$ in this position to $c-1$ gives 
%semistandard $\mu$-tableau $s \not\in \mathcal{T}$. Replacing $t$ with $s$ in $T$, and then
%reordering (if necessary), the entries in its single column so they are increasing under $<$, gives
%a plethystic semistandard tableau $S$ such that $\wt(S) \rhd \wt(T)$. 
%\end{proof}

\subsection*{Polynomial functors and highest-weight vectors}

Let $d \in \N$ and 
let~$E$ be a $d$-dimensional complex vector space. Let $V$ be an $D$-dimensional representation of $\GL(E)$
corresponding, under some choice of bases of $E$ and~$V$, to the homomorphism $\rho : \GL_d(\C) \rightarrow \GL_D(\C)$. Recall that~$V$ 
is a \emph{polynomial representation} of {degree $r$} if for all $a$, $b \in \{1,\ldots, D\}$
the matrix coefficient $\rho(g)_{ab}$ is a polynomial of degree $r$ in the matrix coefficients of 
$g \in \GL_d(\C)$.
Let $\GLEmod$ be the additive category of finitely generated
polynomial representations of $\GL(E)$ and let $\GLEmodr{r}$ be its full subcategory 
of representations of polynomial degree~$r$. 
For each $\lambda \in \Par(r)$ let $\nabla^\lambda : \GLEmod_p \rightarrow \GLEmod_{pr}$ 
be the Schur functor, as defined in~\S\ref{subsec:polyreps}. (Our construction
and proofs have some novel features, but this section will be background for most readers.)

By Proposition~\ref{prop:hw} every polynomial representation~$V \in \GLEmod$
decomposes as a direct sum of submodules each isomorphic to some $\nabla^\lambda(E)$.
Let $[W : \nabla^\lambda(E)]$ denote the number of irreducible summands of the 
polynomial representation
$W$ that are isomorphic to $\nabla^\lambda(E)$. 
% for $\lambda \in \Par(r)$. 
%$\{ \nabla^\lambda(E) : \lambda \in \Par(r) \}$ is a complete set of non-isomorphic
%irreducible polynomial $\GL(E)$-modules of degree $r$. 
%By Proposition~\ref{prop:chars},
%the formal character of $\nabla^\lambda E$ is $s_\lambda(x_1,\ldots,x_d)$. 
By Proposition~\ref{prop:compositionPlethysm},
composition of Schur functors corresponds to plethysm of Schur functions. Hence, by Proposition~\ref{prop:hw}(iv),
\begin{equation}\label{eq:schurnabla} \langle s_\nu \circ s_\mu, s_\lambda \rangle
=  [ \nabla^\nu (\nabla^\mu E) : \nabla^\lambda E] 
 \end{equation}
for all partitions $\lambda$, $\mu$ and $\nu$ with at most $d$ parts.
Thus each of the main theorems has an equivalent restatement as a result on polynomial
representations of $\GL(E)$. %$\GLEmod$.
% see \cite[page 448]{StanleyII}.
To prove these restatements, we use the model for $\nabla^\nu\bigl( \nabla^\mu (E) )$ constructed
in \S\ref{sec:model} and
%a new construction of the Schur functors $\nabla^\lambda$,
%given in \S\ref{subsec:polyreps} below, together with 
the following key fact about highest-weight vectors, as characterized in Lemma~\ref{lemma:hw} using
the Lie algebra action of $\gl(E)$, and proved in Proposition~\ref{prop:hw}: %(i), (ii): 
\emph{if $V$ is a polynomial $\GL(E)$-module 
then $V$ contains a highest-weight vector $v$; moreover, if the weight of $v$ is $\lambda$ then the submodule of $V$
generated by $v$ is isomorphic to $\nabla^\lambda(E)$}.

%basic results
%on highest-weight vectors stated in the proposition immediately below,
%which we prove at the end of \S\ref{sec:background}.
%
%%A polynomial module $V$ for $\GL(E)$ becomes
%%a module for $\gl(E)$ by differentiating the corresponding homomorphism $\rho : \GL(E)
%%\rightarrow \GL(V)$ at the identity. (This is made
%%more concrete in Example~\ref{ex:hwFoulkes} and Proposition~\ref{prop:nabla}(iii).)
%%The relevant results are summarised in \S\ref{subsecweights}.
%
%
%\begin{proposition}\label{prop:hw}
%\prophwtext
%\end{proposition}

%In practice the calculations are
%simplified by passing to the Lie
%algebra $\gl(E)$ and using the characterization of highest-weight
%vectors in Lemma~\ref{lemma:hw}.
To illustrate the power of this property
we end this introduction  %Example~\ref{ex:geometricFoulkes}
by giving
a very short proof that if $\dim E \ge n$ then
\begin{equation} \label{eq:Foulkes}
\Sym^{n}(\Sym^2\! E ) \cong \!\bigoplus_{\lambda \in \Par(n)} \!\nabla^{2\lambda}(E). \end{equation}
By much more lengthy arguments, Boffi shows in \cite{Boffi} 
that, over an arbitrary field, $\Sym^n \hskip-0.5pt\Sym^2$ has a filtration by the functors $\nabla^\lambda$; 
he reports that this result had previously been obtained in~\cite{Abeasis}.
The analogous result for the symmetric
group was proved independently by the second author in~\cite{PagetFiltration}.
Boffi's result was generalized to arbitrary commutative rings in \cite{ChoiKimKoWon}.

\begin{example}\label{ex:hwFoulkes}
\newcommand{\W}[1]{W(#1)}
Let $E$ have basis $\u_1,\ldots, \u_n$.
For each $\s \in \{1,\ldots, n\}$
and each function $\sigma : \{1,\ldots, \s\}
\rightarrow \{1,\ldots, \s\}$, define
\[ w(\sigma) = (\u_{1}\u_{1\sigma})\ldots (\u_\s \u_{\s\sigma})
\in \Sym^\s (\Sym^2 E) \]
where $i \sigma$ is the image of $i$ under the function $\sigma$.
Let $S_\ell$ denote the symmetric group of all permutations of $\{1,\ldots, \ell\}$.
Define 
\[ \W{\s} = \sum_{\sigma \in \SymG_\s} w(\sigma) \sgn(\sigma) \]
for each $\s \in \N_0$. Since $\prod_{i=1}^s e_i e_{i\sigma} = \prod_{j=1}^\s e_{j\sigma^{-1}} e_j$
for $\sigma \in S_\ell$ we have $w(\sigma) = w(\pi)$ if $\pi = \sigma^{-1}$, and it is easily
seen that the converse also holds. In particular, the unique permutation $\sigma$ 
such that $w(\sigma) =
\u_1^2 \ldots \u_\s^2$ is the identity. Hence
the coefficient of $\u_1^2 \ldots \u_\s^2$ in $\W{\s}$ is~$1$, and $\W{\s} \not=0$.
For $c \in \{2,\ldots, \ell\}$, let
 $X^{(c)} \in \gl(E)$ be the Lie algebra element defined, as immediately before Lemma~\ref{lemma:hw},
by $X^{(c)} \cdot \u_c = \u_{c-1}$ and $X^{(c)} \cdot \u_b = 0$ if~$b\not= c$. 

Fix $c \in \{2,\ldots, \ell\}$ and
define $\df : \{1,\ldots,\s\} \rightarrow \{1,\ldots, \s\}$ by
$\df(c) = c-1$ and $\df(b) = b$ if $b \not= c$. Thus $\df(b) = c-1$ if and only if $b \in \{c-1,c\}$.
The action of $\gl(E)$ on symmetric powers is recalled immediately
before the proof of 
Proposition~\ref{prop:nabla}. Using this action and
$w(\sigma) = w(\sigma^{-1})$,
 we find that
\[ X^{(c)} \cdot w(\sigma) = (\u_{c-1} \u_{c\sigma}) \prod_{i \not= c} (\u_{i} \u_{i \sigma})
+ (\u_{c \sigma^{-1}} \u_{c-1}) \prod_{j \not= c} (\u_{j \sigma^{-1}}\u_j). \]
The summands are the products of all $\u_{b\sigma^{-1}\delta}\u_b$ and 
$\u_{b}{\u_{b\sigma\delta}}$, respectively.
Hence 
$X^{(c)} \cdot w(\sigma) = w(\sigma^{-1} \df) + w(\sigma \df)$ and so
\begin{equation} X^{(c)} \cdot \W{\s} = \sum_{\sigma \in S_\ell}  w(\sigma^{-1} \df) \sgn(\sigma) + \sum_{\sigma \in S_\ell}  w(\sigma \df)\sgn(\sigma) . \label{eq:XonW} \end{equation}
%and~\eqref{eq:tensorAction} below.)
Let $\tau$ be the transposition $(c-1,c)$. Using that
$b \tau \df = b\df$ for each $b \in \{1,\ldots, \ell\}$ and so $\tau\df = \df$ we get
%\begin{align*}
\[ \sum_{\sigma \in S_\s} w(\sigma \df) \sgn(\sigma) =
\sum_{\sigma' \in S_\s} w(\sigma' \tau \df) \sgn(\sigma' \tau) 
= -\sum_{\sigma' \in S_\s} w(\sigma' \df) \sgn(\sigma'). \]
Hence $\sum_{\sigma \in S_\s} w(\sigma \delta) \sgn(\sigma) = 0$. Similarly
$\sum_{\sigma \in S_\s} w(\sigma^{-1} \delta) \sgn(\sigma) = 0$. It now follows from~\eqref{eq:XonW} that
$X^{(c)} \cdot \W{\s} = 0$.
(This cancellation 
has an attractive combinatorial interpretation, shown in Figure~1 above.)
By Lemma~\ref{lemma:hw},~$\W{\s} \in \Sym^\s ( \Sym^2 E)$ is a highest-weight vector of weight~$(2^\s)$.

\begin{figure}[t]
\begin{center}
\scalebox{0.9}{\includegraphics{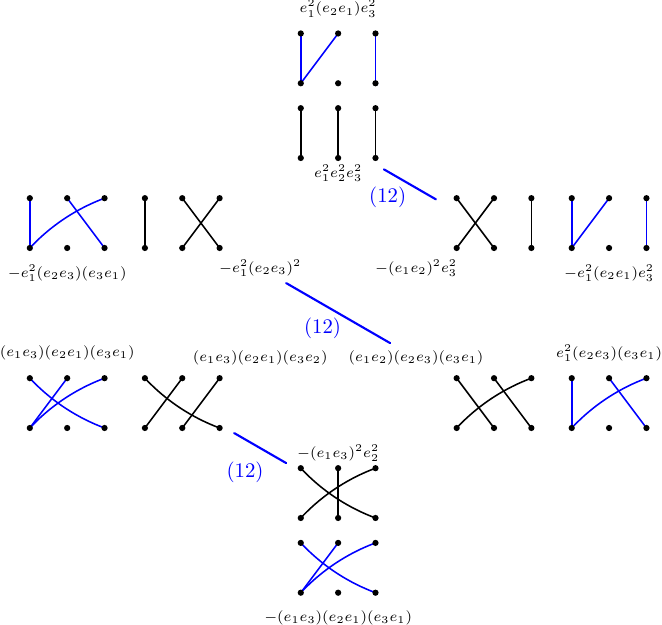}}
\end{center}
\vspace*{-6pt}
\caption{\small Example~\ref{ex:hwFoulkes} for $\ell=3$. Writing permutations $\sigma \in S_3$
in one-line form, the six summands in $w_3$ are $w(123)$, $w(213)$, $w(231)$, $w(321)$, $w(312)$, $w(132)$,
clockwise from the top. 
 Identifying $1$ and $2$ in the image of $\sigma$ (shown as the bottom row in the permutation
 diagrams)
the contributions from $\sigma$ and $\sigma (12)$ cancel. Cancelling pairs are connected
by edges marked $(12)$.
Thus $\sum_{\sigma \in S_3} w(\sigma \delta) \sgn(\sigma) = 0$.}
%[{\bf MW: See `too indulgent' comment, but maybe this might be good for a talk}]
%}
\end{figure}

Now take  $\lambda \in \Par(n)$.
Multiplying highest-weight vectors as in the proof of Theorem~\ref{thm:Ikenmeyer}
shows that if $a$ is the first part of $\lambda$
then 
%\begin{equation}\label{eq:wProduct} 
\[ \W{\lambda'_1}\ldots \W{\lambda'_a} \in \Sym^n (\Sym^2\! E) \] %\end{equation}
is a highest-weight vector
of weight $2\lambda$. By the key fact stated before this example,
$\Sym^n (\Sym^2\! E)$ has $\nabla^{2\lambda}(E)$
as a summand.

We use the  Frobenius--Schur count of involutions to show
that there are no further summands. 
For $\lambda \in \Par(r)$,  %NB already defined in introduction
let $\chi^\lambda$ be the irreducible character of the symmetric
group $\SymG_r$ corresponding to $\nabla^\lambda(E)$
(and, by the characteristic isometry, to the Schur function~$s_\lambda$). By the count of involutions,
$\sum_{\lambda \in \Par(r)} \chi^\lambda(1) = T_r$, where $T_r$ is the number of 
permutations of order at most two in $\SymG_r$.
On the other hand, setting
\[ M_k = \bigl(\Sym^n (\Sym^2 \! E)\bigr) \otimes \smash{\bigwedge^k}\phantom{\rule{0pt}{16pt}} E, \]
it follows from Pieri's rule and the result already proved that
$M_k$ contains $\bigoplus_{\lambda} % \in \Par(r)}
\nabla^\lambda (E)$, where the sum is over all partitions $\lambda \in \Par(r)$ having exactly~$k$ 
odd parts. By~\eqref{eq:charisoplethysm} in \S\ref{subsec:plethysm} below,
the character of the symmetric group $S_r$
corresponding to $\bigoplus_{2n + k = r} M_k$ is 
$\sum_{2n+k = r} (1_{C_2 \wr S_n} \times \sgn_{S_k})\!\ind_{C_2 \wr S_n \times S_k}^{S_r}$.
For $n \ge 1$, the degree of $(1_{C_2 \wr S_n} \times \sgn_{S_k})\!\ind_{C_2 \wr S_n \times S_k}^{S_r}$ is
the number of involutions in $S_{2n}$ having precisely~$k$ fixed points. Therefore this
character has degree~$T_r$ and
so $\bigoplus_{2n+k = r}M_k \cong \bigoplus_{\lambda \in \Par(r)}
\nabla^\lambda(E)$ and $M_0 = \Sym^n \bigl( \Sym^2 \! E\bigr)$ 
has no summands other than those already found.
\end{example}

%\begin{figure}[h]
%\begin{center}
%\scalebox{0.9}{\includegraphics{S3Cancellation.pdf}}
%\end{center}
%\vspace*{-6pt}
%\caption{\small Example~\ref{ex:hwFoulkes} for $\ell=3$. Writing permutations $\sigma \in S_3$
%in one-line form, the six summands in $w_3$ are $w(123)$, $w(213)$, $w(231)$, $w(321)$, $w(312)$, $w(132)$,
%clockwise from the top. 
% Identifying $1$ and $2$ in the image of $\sigma$ (bottom row)
%the contributions from $\sigma$ and $\sigma (12)$ cancel (shown in blue).
%Thus $\sum_{\sigma \in S_3} w(\sigma \delta) \sgn(\sigma) = 0$.
%Dually, identifying
%$1$ and $2$ in the domain of $\sigma$ (top row), the contributions from $\sigma$ and $(12)\sigma$ cancel
%(shown in red). Thus $\sum_{\sigma \in S_3} w(\delta \sigma)\sgn(\sigma) = 0$.
%Cancelling pairs are connected by lines of the appropriate colour.}
%%[{\bf MW: See `too indulgent' comment, but maybe this might be good for a talk}]
%%}
%\end{figure}

%\subsection*{Outline}
%Background results are collected in \S 2 below. The proofs of Theorems~\ref{thm:BCV-N},
%\ref{thm:Brion},~\ref{thm:Ikenmeyer},~\ref{thm:maxls} are then given in \S\S 3--6. 

\section{Background}\label{sec:background}

\subsection{Partitions and tableaux}\label{subsec:pts}
Let $\lambda$ be a partition of $r \in \N$. 
Recall that $\ell(\lambda)$ denotes the number of parts of $\lambda$.
The \emph{Young diagram} of $\lambda$ is the set $[\lambda] = \{ (i,j) : 1 \le i \le \ell(\lambda),
1 \le j \le \lambda_i\}$. Let $\B$ be a set.
A $\lambda$-\emph{tableau with entries from $\B$} is a function $t : [\lambda] \rightarrow \B$.
%No, instead extend definition there, since complicates definition of content
%(The reason for permitting zero entries will be seen in the proof of Theorem~\ref{thm:BCV-N}.)
We write the image of $(i,j)$ under $t$ as $t_{(i,j)}$. 
If $t_{(i,j)} = b \in \B$ then we say that $b$ 
is the \emph{entry} of $t$ in row $i$ and column $j$.
When $\B$ is a set of natural numbers,
this corresponds to the usual diagrammatic representation of tableaux (see \S\ref{subsec:polyreps} for
a small example). 

Now suppose that $\B$ is totally ordered by an order denoted $<$.
We say that a tableau~$t$ with entries from $\B$ is \emph{row-semistandard} if its rows are weakly increasing
from left to right, \emph{column-standard} if its columns are strictly increasing from top to bottom,
both under the order $<$. We say that $t$ is
\emph{semistandard} if it is both row-semistandard and column-standard. %, all under the order $<$. 
The terms
\emph{row-standard} and \emph{standard} are defined analogously, requiring in addition
 that the rows are strictly
increasing. 
Let $\SSYT_\B(\lambda)$ denote the set of semistandard $\lambda$-tableaux with entries from~$\B$.
When $\B \subseteq \N$, we refer to elements of $\SSYT_\B(\lambda)$ simply as semistandard $\lambda$-tableaux.
If $\B = \{1,\ldots, d\}$ and $t$ has exactly $\beta_b$ entries 
for each $b \in \{1,\ldots,d\}$, then we say that $t$ has \emph{content}~$\beta$.
%and write $\cont(t) = \beta$.
Let $\SSYT(\lambda,\beta)$ denote the set of semistandard $\lambda$-tableaux 
of content $\beta$.
Let $t^{\lambda}$ be the unique element of $\SSYT(\lambda,\lambda)$, as defined by
$t^\lambda_{(i,j)} = i$ for
each \hbox{$(i,j) \in [\lambda]$}. In the total order on column-standard $\lambda$-tableau in the following
definition,
$t^\lambda$ is the least element.

\begin{definition}\label{defn:totalorder}
Given column-standard $\lambda$-tableaux $t$ and $u$ with entries
in a totally ordered set, set $t < u$ if and only if in the rightmost column
that differs between $t$ and $u$, the greatest entry not appearing in both columns lies in $u$.
\end{definition} %NB may compare tableau with different content

In particular, if $\mu$ and $\nu$ are partitions then the semistandard $\mu$-tableaux are totally
ordered by $<$. A plethystic semistandard tableau of shape $\mu^\nu$, as defined in Definition~\ref{defn:pssyt},
is a semistandard $\nu$-tableau with entries from $\SSYT_{\N}(\mu)$.

%We leave it as an exercise to check this is a total order

The symmetric group on $[\lambda]$ acts on $\lambda$-tableaux on the right
by place permutation: thus if $\sigma \in \SymG_{[\lambda]}$ and $t$ is a $\lambda$-tableau 
then $(t  \sigma)_{(i,j)} = t_{(i,j)\sigma^{-1}}$. Thus the entry in position $(i,j)$ of~$t$
is found in position $(i\sigma,j\sigma)$ of $t\sigma$. Let %$\RPP(\lambda)$, respectively 
$\CPP(\lambda)$ denote the Young subgroup
of $\SymG_{[\lambda]}$ having as its orbits the %rows, respectively 
columns of~$[\lambda]$.

\subsection{Schur functions and plethysm}\label{subsec:plethysm}
Let $\Lambda$ be the ring of symmetric functions as defined in \cite[\S 7]{StanleyII}.
Let $\lambda \in \Par(r)$.
Given a $\lambda$-tableau~$t$ of content $\beta$, let~$x^t$ denote the
monomial $x_1^{\beta_1} \ldots x_{\ell(\beta)}^{\beta_{\ell(\beta)}}$.
%$\prod_{b\in \N} x_b^{\beta_b}$. 
The \emph{Schur function} $s_\lambda \in \Lambda$ is defined by
$s_\lambda = \sum_{t \in \SSYT_\N(\lambda)} x^t$. % where the sum is over all $t \in \SSYT_\N(\lambda)$.
%semistandard $\lambda$-tableaux~$t$
%with entries from $\N$.
By~\cite[Theorem 7.10.2]{StanleyII}, $s_\lambda$ is a symmetric function. Thus
%Equivalently, by \cite[Theorem 7.10.2]{StanleyII},
\begin{equation}
\label{eq:Schur} s_\lambda = \sum_{\kappa \in \Par(r)} |\SSYT(\lambda,\kappa)| \mon_\kappa
\end{equation}
where $\mon_\kappa = x_1^{\kappa_1}\ldots \smash{x_{\ell(\kappa)}^{\kappa_{\ell(\kappa)}}} + \cdots$ 
denotes the monomial symmetric function corresponding to $\kappa$.
For example, $s_{(2)} = x_1^2 + x_2^2 + \cdots + x_1x_2 + x_1x_3+x_2x_3 + \cdots 
= \mon_{(2)} + \mon_{(1,1)}$.

Given $f(x_1,x_2,\ldots ) \in \Lambda$, the plethysm $f \circ s_\lambda$  may be defined by substituting
the monomials $x^t$, where $t$ ranges over all semistandard $\lambda$-tableaux with entries from $\N$,
for the variables $x_1, x_2,\ldots $ of $f$. The general definition of $f \circ g$ is given in
\cite[Ch.~7, Appendix 2]{StanleyII}, 
\cite[Ch.~1, Appendix A]{MacDonald} or \cite{LoehrRemmel}.
For example,
the combinatorial analogue of the case $n=2$ of~\eqref{eq:Foulkes} is
\[ s_{(2)} \circ s_{(2)} = s_{(2)}(x_1^2, x_1x_2, \ldots )
= \mon_{(4)} + \mon_{(3,1)} + 2\mon_{(2,2)} = s_{(4)} + s_{(2,2)}.\]
The coefficient of $\mon_{(2,2)}$ is $2$ since $x_1^2x_2^2$ may be
obtained as both $(x_1^2)(x_2^2)$ 
and $(x_1x_2)^2$ when multiplying out $\smash{s_{(2)}(x_1^2,x_1x_2,\ldots)}$.

There is an involutory ring homomorphism 
$\omega : \Lambda \rightarrow \Lambda$ defined by $\omega(s_\lambda) = s_{\lambda'}$. 
We call $\omega$ the \emph{sign twist}. Its effect on plethysms is as follows.
%\cite[Ch.~I, Equation~(2.7)]{Macdonald}

\begin{lemma}\label{lemma:signtwist}
Let $\mu$ and $\nu$ be partitions. If $\mu$ is a partition of $m$ then
\[ \omega(s_\nu \circ s_\mu) = \begin{cases} s_\nu \circ s_{\mu'} & \text{if $m$ is even} \\
s_{\nu'} \circ s_{\mu'} & \text{if $m$ is odd.} \end{cases} \qedhere \]
\end{lemma}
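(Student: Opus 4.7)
The plan is to deduce the lemma from a more general identity: for any $f \in \Lambda$ and any homogeneous $g \in \Lambda$ of degree $m$,
\[ \omega(f \circ g) = \begin{cases} f \circ \omega(g) & \text{if $m$ is even,} \\ \omega(f) \circ \omega(g) & \text{if $m$ is odd.} \end{cases} \]
Specialising to $f = s_\nu$ and $g = s_\mu$ and invoking $\omega(s_\lambda) = s_{\lambda'}$ immediately yields the two cases of the lemma.

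To prove this general identity I would pass to the power-sum basis of $\Lambda$, using only two ingredients: $\omega(p_k) = (-1)^{k-1} p_k$ and the fact that $p_n \circ -$ is the $n$th Adams operation $h \mapsto h(x_1^n, x_2^n, \ldots)$, so that $p_n \circ p_\lambda = p_{n\lambda}$ for every partition $\lambda$. The base case is then a short sign count: for $\lambda \vdash m$,
\[ \omega(p_n \circ p_\lambda) = \omega(p_{n\lambda}) = (-1)^{nm - \ell(\lambda)} p_{n\lambda}, \qquad p_n \circ \omega(p_\lambda) = (-1)^{m - \ell(\lambda)} p_{n\lambda}, \]
so $\omega(p_n \circ p_\lambda) = (-1)^{(n-1)m}\, p_n \circ \omega(p_\lambda)$. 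Both sides are $\Q$-linear in the inner argument, so this extends to $\omega(p_n \circ g) = (-1)^{(n-1)m}\, p_n \circ \omega(g)$ for every $g$ of degree $m$.

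To bootstrap from $p_n$ to a general $p_\mu$, I would use that plethysm is a ring homomorphism in its first argument, giving $p_\mu \circ g = \prod_i (p_{\mu_i} \circ g)$; applying $\omega$ factor-by-factor then yields $\omega(p_\mu \circ g) = (-1)^{(|\mu| - \ell(\mu))m}\, p_\mu \circ \omega(g)$. Writing an arbitrary $f = \sum_\mu c_\mu p_\mu$ and again using linearity in the first argument, I obtain $\omega(f \circ g) = \sum_\mu c_\mu (-1)^{(|\mu| - \ell(\mu))m}\, p_\mu \circ \omega(g)$. When $m$ is even the signs vanish and the right-hand side is $f \circ \omega(g)$; when $m$ is odd the sign $(-1)^{|\mu|-\ell(\mu)}$ is precisely the factor produced by $\omega$ on $p_\mu$, so the sum collapses to $\bigl(\sum_\mu c_\mu \omega(p_\mu)\bigr) \circ \omega(g) = \omega(f) \circ \omega(g)$.

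The main obstacle is nothing deeper than careful sign bookkeeping, together with the small care needed because plethysm is a ring homomorphism only in its first argument; there is no conceptual subtlety beyond the two power-sum formulas already highlighted.
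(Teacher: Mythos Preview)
Your argument is correct and is exactly the standard power-sum computation one finds in Macdonald; the paper does not give an independent proof but simply cites Macdonald for this identity, so your approach coincides with the referenced one. One minor remark: the step ``both sides are $\Q$-linear in the inner argument'' deserves the one-line justification that $g\mapsto p_n\circ g$ is the $\Q$-algebra endomorphism $p_k\mapsto p_{nk}$ of $\Lambda_\Q$ (hence linear), since plethysm is not linear in its second argument for general outer function.
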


\begin{proof}
See \cite[Ch.~I, Equation~(2.7)]{MacDonald}.
\end{proof}

In Examples~\ref{ex:ex} and \ref{ex:hwFoulkes} we used the characteristic
isometry, which sends the irreducible character $\chi^\lambda$ of~$\SymG_r$
to the Schur function $s_\lambda$ where $\lambda \in \Par(r)$. 
By \cite[A2.8]{StanleyII} or 
\cite[Ch.~1, Appendix A, (6.2)]{MacDonald}, if $\mu \in \Par(m)$ and $\nu \in \Par(n)$
then under this isometry,
\begin{equation}
  \label{eq:charisoplethysm} 
  \bigl( (\chi^{\mu})^{\widetilde{\times n}} \Inf_{S_n}^{S_m \wr S_n} \!\chi^\nu \bigr) \Ind_{S_m 
  \wr S_n}^{S_{mn}} \mapsto
    s_{\nu} \circ s_{\mu}
\end{equation}
where $(\chi^{\mu})^{\widetilde{\times n}}$ is the character of 
the irreducible representation of $\SymG_m \wr \SymG_n$
defined in \cite[(4.3.8)]{JK} and $\Inf_{S_n}^{S_m\wr S_n}$ is the inflation map.
%with image all characters of $S_m \wr S_n$ on which the base group $S_m \times \cdots \times S_m$ acts
%trivially. 
Thus all the  main theorems have immediate translations
into results on characters of symmetric groups.
%In particular, if $\mu = (m)$ and $\nu = (n)$ 
%then, since $\chi^{(m)}$ and $\chi^{(n)}$ are trivial characters,
%induce the trivial character to obtain the
%the preimage is the permutation character of $\SymG_{mn}$ acting on the cosets of $\SymG_m \wr \SymG_n$.
We 
 shall not use the characteristic isometry any further below.

%the correspondence between plethysms and characters of
%symmetric groups induced from wreath products 

\subsection{Polynomial representations of $\GL_d(K)$.}
\label{subsec:polyreps}

In the following three subsections
we construct the Schur functors $\nabla^\lambda$ 
used in the proofs of the main theorems. We end with Remark~\ref{remark:James}
which explains the connection with an earlier construction due to James \cite[Ch.~26]{James}.
%%OMIT
%In the appendix  we explain the
%connection with earlier constructions due to Fulton, Green and James. 
Let $K$  be a commutative ring, let~$\B$ be a totally ordered set as in \S\ref{subsec:pts},
and let $V$ be a free $K$-module with basis $\{v_b : b \in \B \}$.
Except in the proof of Lemma~\ref{lemma:GLgarnir}, the case $K = \C$ suffices. The set
 $\B$ is either $\{1,\ldots, d\}$
or, when we compose Schur functors, 
the set of semistandard tableaux with entries from $\{1,\ldots, d\}$,
with the order from Definition~\ref{defn:totalorder}.

Fix $\lambda \in \Par(r)$. Recall that 
 $\CPP(\lambda)$ is defined after Definition~\ref{defn:totalorder}.

\begin{definition}\label{defn:polytabloids}
Let $\Sym^\lambda \V =  \bigotimes_{i=1}^{\ell(\lambda)} \Sym^{\lambda_i} \V$.
Given a $\lambda$-tableau $t$ with entries from $\B$, the \emph{$\mathrm{GL}$-tabloid}
corresponding to $t$ is the element $f(t) \in \Sym^\lambda \V$ defined by
\[ f(t) = \bigotimes_{i=1}^{\ell(\lambda)} \prod_{j=1}^{\lambda_i} v_{t_{(i,j)}} \in \Sym^\lambda \V.\]
The $\emph{$\mathrm{GL}$-polytabloid}$ corresponding to $t$ is
\[ F(t) = \sum_{\sigma \in \CPP(\lambda)} f(t  \sigma)\sgn(\sigma) \in \Sym^\lambda \V. \]
We define $\nabla^\lambda(V)$ 
to be the $K$-submodule of $\Sym^\lambda \V$ spanned by
the $\mathrm{GL}$-polytabloids $F(t)$ for $t$ a $\lambda$-tableau with
entries from $\B$. %, acted on by $\GL(V)$.
\end{definition}

%The $\GL$-tabloids $f(t)$ for $t$ a row-semistandard $\lambda$-tableau form
%a basis for $\Sym^\lambda(V)$.
Since $f(t) = f(t')$ if and only if the rows of $t$ and $t'$ are equal as multisets, 
\begin{equation}\label{eq:symBasis} 
\{ f(t) : \text{ $t$ a row-semistandard $\lambda$-tableau with entries from $\B$} \} \end{equation}
is a basis of  $\Sym^\lambda(V)$.
It is also useful to note that if $\sigma \in \CPP(\lambda)$ then 
\begin{equation} \label{eq:column} F(t  \sigma) = F(t) \sgn(\sigma). \end{equation}
In particular, $F(t) = 0$ if  $t$ has a column with a repeated entry, and
so  $\nabla^\lambda(V) = 0$ if $\ell(\lambda) > \dim V$, as used in Example~\ref{ex:geometricFoulkes}.
A potential trap is that
 $F(t)$ depends on the tableau $t$, not just on the $\GL$-tabloid $f(t)$. 
 For example, if $\B = \{1,2,3\}$ and
\[ t = \young(13,21)\, , \quad t' = \young(13,12) \]
then $f(t) = f(t')$ %by the first sentence,
but $F(t') = 0$  %since the first column of $t'$ has a repeated entry, 
whereas
$F(t) = v_1v_3 \otimes v_2v_1 - v_2v_3 \otimes v_1^2 - v_1^2 \otimes v_2v_3 + v_2v_1 \otimes v_1v_3 \not=0$.
It is clear that $\nabla^\lambda(V)$ is functorial in $V$, so $\nabla^\lambda$
is a endofunctor of the category of free $K$-modules of finite rank.

Postponing the action of the general linear group 
for the moment, we find an explicit basis for $\nabla^\lambda(V)$,
introducing two results that are critical to the proofs of the main theorems.
The following lemma is the analogue
for $\GL$-polytabloids %acted on by place permutation)
of part of the proof of Theorem 8.4 in \cite{James}. There are some subtle differences between the proofs
because of our use of place permutations.

\begin{lemma}\label{lemma:GLgarnir}
Let $1 \le j < j' < \lambda_1$ and let $1 \le i \le \lambda'_{j'}$.
Define subsets of $[\lambda]$ by $A_\lambda(i,j) = \{(i,j),\ldots,(\lambda_j',j)\}$ and 
$B_\lambda(i,j') = \{(1,j'),\ldots,(i,j')\}$. If $t$ is a $\lambda$-tableau then
\[ \sum_{\tau} F(t  \tau) \sgn(\tau) = 0 \]
where the sum is over all $\tau \in  \SymG_{A_\lambda(i,j) \cup B_\lambda(i,j')}$.
\end{lemma}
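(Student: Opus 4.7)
The plan is to lift the $\mathrm{GL}$-polytabloids to the ambient tensor power $V^{\otimes r}$, where $r = |\lambda|$, and exploit the following structural observation: the transposition $\sigma_0 = ((i,j),(i,j'))$ lies \emph{both} in $S_{A \cup B}$ (since $(i,j)$ is the top of $A_\lambda(i,j)$ and $(i,j')$ is the bottom of $B_\lambda(i,j')$) and in the row stabilizer of $[\lambda]$ (both positions sit in row $i$). This double membership is precisely what the pigeonhole count $|A_\lambda(i,j)| + |B_\lambda(i,j')| = \lambda'_j + 1$ forces, and it will collapse the Garnir sum by a two-element coset argument.

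First I would fix a total order on $[\lambda]$, set $v_t = \bigotimes_{p} v_{t_p} \in V^{\otimes r}$, and take $\phi \colon V^{\otimes r} \to \Sym^\lambda V$ to be the canonical projection that symmetrizes each row block. Under the place-permutation action of $S_{[\lambda]}$ on $V^{\otimes r}$, the identity $(t\sigma_1)\sigma_2 = t(\sigma_1\sigma_2)$ used in the paper translates to $v_{t\sigma} = \sigma \cdot v_t$. Writing $\alpha_G = \sum_{g \in G}\sgn(g)\, g$ for any subgroup $G \le S_{[\lambda]}$, we have $f(t) = \phi(v_t)$ and $F(t) = \phi(\alpha_{\CPP(\lambda)} v_t)$; a one-line check then yields $\alpha_{\CPP(\lambda)} v_{t\tau} = \tau\, \alpha_{\CPP(\lambda)} v_t$ and hence
\[
\sum_{\tau \in S_{A \cup B}}\sgn(\tau)\, F(t\tau) \;=\; \phi\bigl(\alpha_{S_{A \cup B}}\,\alpha_{\CPP(\lambda)}\,v_t\bigr).
\]

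Next, I would pick a right transversal $T$ of $\langle\sigma_0\rangle$ in $S_{A\cup B}$ and, using $\sgn(\sigma_0) = -1$, pair $\tau$ with $\sigma_0\tau$ to factor
\[
\alpha_{S_{A \cup B}} \;=\; (1 - \sigma_0)\sum_{\tau \in T}\sgn(\tau)\,\tau.
\]
For any $u \in V^{\otimes r}$ we have $\phi((1 - \sigma_0)u) = \phi(u) - \phi(\sigma_0 u) = 0$, because $\sigma_0$ permutes two tensor factors inside the common symmetric block $\Sym^{\lambda_i}V$ of $\Sym^\lambda V$, on which $\phi$ is already invariant. Applying this with $u = \bigl(\sum_{\tau\in T}\sgn(\tau)\,\tau\bigr)\alpha_{\CPP(\lambda)}v_t$ yields the desired vanishing.

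I do not foresee any real obstacle. The only delicate point is bookkeeping the right action on tableaux against the left action on tensors, which is entirely controlled by the identity $v_{t\sigma} = \sigma v_t$. The argument uses no division and so works over the arbitrary commutative ring $K$ permitted by the statement, which is what the proof of Lemma~\ref{lemma:GLgarnir} demands in full generality.
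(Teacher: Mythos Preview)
Your argument has a genuine gap at the step you call a ``one-line check''. The identity $\alpha_{\CPP(\lambda)}\, v_{t\tau} = \tau\,\alpha_{\CPP(\lambda)}\, v_t$ is false: under any consistent action it amounts to $\tau\,\alpha_{\CPP(\lambda)} = \alpha_{\CPP(\lambda)}\,\tau$ in the group algebra, i.e.\ to $\tau$ normalising $\CPP(\lambda)$. But $\tau\in S_{A\cup B}$ typically moves boxes between columns $j$ and $j'$, so conjugating a column permutation by $\tau$ leaves $\CPP(\lambda)$. Concretely, in the double sum
\[
\sum_{\tau\in S_{A\cup B}}\sum_{\sigma\in\CPP(\lambda)}\sgn(\tau\sigma)\,f(t\tau\sigma),
\]
your pairing $\tau\leftrightarrow\sigma_0\tau$ compares $f(t\tau\sigma)$ with $f(t\sigma_0\tau\sigma)$; the latter tableau is obtained from the former by swapping the entries in positions $(i,j)\tau\sigma$ and $(i,j')\tau\sigma$, and these need not lie in the same row once $\tau\sigma$ has been applied. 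Equivalently, in your tensor-power formulation the map $\phi$ is invariant only under row transpositions acting \emph{last}, whereas your factorisation places $1-\sigma_0$ to the \emph{left} of $\alpha_{\CPP(\lambda)}$, and you cannot commute it past.

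The paper's proof confronts exactly this obstruction: it fixes $\sigma\in\CPP(\lambda)$ first, conjugates the inner sum to $S_{A\sigma\cup B\sigma}$, and then uses the pigeonhole count $|A\cup B|=\lambda'_j+1$ to find, for each $\sigma$ separately, a transposition $\delta=\delta(\sigma)$ swapping two boxes of $A\sigma\cup B\sigma$ lying in a common row. The dependence of $\delta$ on $\sigma$ is essential; your single fixed $\sigma_0$ cannot serve all $\sigma$ at once.
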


\begin{proof}
Let $A$ and $B$ denote $A_\lambda(i,j)$ and $B_\lambda(i,j')$, respectively. By definition,
the left-hand side is %new line
$\sum_{\tau \in \SymG_{A \cup B}} \!\sum_{\sigma \in \CPP(\lambda)} f\bigl( (t\tau) \sigma \bigr) \sgn(\tau) \sgn(\sigma)$.
Therefore it suffices to
show that
$\sum_{\tau \in \SymG_{A \cup B}} f\bigl( (t  \tau) \sigma\bigr) \sgn(\tau) = 0$
for each $\sigma \in \CPP(\lambda)$. 
%Observe that $(t  \tau)  \sigma = (t  \sigma)  \tau^\sigma$.
Since $|A \cup B| = \lambda_j' + 1$, there exist boxes $(h,j)\sigma \in A\sigma$ %corrected \sigma acting on row/column, not on the box itself
and $(h',j')\sigma \in B\sigma$ such that $(h,j)\sigma$ and $(h',j')\sigma$ are in the same row of $[\lambda]$. %re-expressed i\sigma = i'\sigma
Let $\delta = \bigl( (h,j)\sigma, (h',j')\sigma \bigr) \in \SymG_{A\sigma \cup B\sigma}$
be the transposition swapping these boxes. 
%Since the rows of $(t  \sigma)  \delta$ and $t  \sigma$ are equal as sets,
%we have $f (t  \sigma  \delta \theta) = f(t  \sigma \theta)$ for any $\theta \in \SymG_{[\lambda]}$.
%This is just false: OK with delta at end, but rephrased
Let $\theta_1, \ldots, \theta_\ell$ be representatives for the left cosets of $\langle \delta\rangle$
in $\SymG_{A \sigma \cup B\sigma}$. Thus
$\SymG_{A\sigma \cup B\sigma} = \bigcup_{c=1}^\ell \theta_c \langle \delta \rangle$
%for $\langle \delta \rangle \le \SymG_{A\sigma \cup B\sigma}$ we have
and
\begin{align*}
\sum_{\tau \in \SymG_{A \cup B}} \!\!\! f\bigl( (t  \tau) \sigma\bigr) \sgn(\tau) 
 &= \!\!\! \sum_{\tau \in \SymG_{A \cup B}} \!\!\! f\bigl( (t  \sigma)  (\sigma^{-1}\tau \sigma) \bigr) \sgn(\tau) \\
 &= \sum_{\tau^\star \in \SymG_{A\sigma \cup B\sigma}} \!\!\! f (t \sigma \tau^\star) \sgn(\tau^\star) \\
 &= \!\sum_{c=1}^\ell \bigl( f(t \sigma   \theta_c) 
- f(t \sigma \theta_c  \delta) \bigr) \sgn(\theta_c) \\
&= 0 \end{align*}
where the final equality holds because $\delta$
swaps two boxes in the same row of $[\lambda]$, 
and so the tableaux $t \sigma \theta_h$ and $(t \sigma \theta_h) \delta$ 
have equal rows.
%
%\[\sum_{\tau \in \SymG_{A \cup B}} \!\!\! f\bigl( (t  \tau) \sigma\bigr) \sgn(\tau) 
% = \!\!\! \sum_{\tau \in \SymG_{A \cup B}} \!\!\! f\bigl( (t  \sigma)  \tau^\sigma \bigr) \sgn(\tau)
% = \!\sum_{h=1}^k \bigl( f(t \sigma   \theta_h)
%- f(t \sigma \delta \theta_i) \bigr)
%= 0 \]
%as required.
\end{proof}

%If $A$ and $B$ are subsets of $[\lambda]$ as in Lemma~\ref{lemma:GLgarnir} then, by this lemma,
%\begin{equation}
%\label{eq:snake}
%F(t) = -\sum_{\tau \in \SymG_{A\cup B} \atop \tau\not= \id\rule{0pt}{6pt}} F(t\tau) \sgn(\tau).
%\end{equation}
%We call~\eqref{eq:snake} a \emph{snake relation}, because of the shape formed by the boxes in~$A$ and $B$
%when $j' = j+1$. %In this case $A$ and $B$ are determined by $(i,j)$.
%%To keep track of successive applications of snake relations, we use
%%the order in Definition~\ref{defn:totalorder}.
%

If $\tau \in S_{A_\lambda(i,j)} \times S_{B_\lambda(i,j')}$ then,
by~\eqref{eq:column}, $F(t \tau) \sgn(\tau) = F(t)$. 
Let $\phi_1, \ldots, \phi_\ell$ be representatives for the left cosets
of $\SymG_{A_\lambda(i,j)} \times \SymG_{B_\lambda(i,j')}$
in $\SymG_C$, where $C = A_\lambda(i,j)\cup B_\lambda(i,j')$, chosen so that $\phi_1 = \id$. Thus
%$\SymG_{A_\lambda(i,j)} \times \SymG_{B_\lambda(i,j')}  WRONG
$S_C = \bigcup_{c=1}^\ell 
\phi_c(\SymG_{A_\lambda(i,j)} \times \SymG_{B_\lambda(i,j')})$.
By~\eqref{eq:column}, $F(t \phi_c \tau) = F(t \phi_c) \sgn(\tau)$ for each
$c$ and each $\tau \in S_{A_\lambda(i,j)} \times S_{B_\lambda(i,j')}$.
Therefore Lemma~\ref{lemma:GLgarnir} implies that
\[ |A_\lambda(i,j)|!\, |B_\lambda(i,j')|! \sum_{c=1}^\ell F(t \phi_c) \sgn(\phi_c)  = 0.\]
When $K = \Z$ we may cancel the factorials since $\nabla^\lambda(V)$ is a submodule
of the free $\Z$-module $\Sym^\lambda (V)$.
Thus the relation
\begin{equation}
\label{eq:snake}
F(t) = -\sum_{c=2}^\ell F(t \phi_c) \sgn(\phi_c).
%F(t) = -\sum_{\tau \in \SymG_{C}
%\atop \tau\not= \id\rule{0pt}{6pt}} F(t\tau) \sgn(\tau)
\end{equation}
holds over an arbitrary commutative ring $K$.
%%A convenient way to choose the coset representatives is to take each $\phi_i$ 
%%to be a product of transpositions swapping boxes in $A_\lambda(i,j)$
%%with boxes in $B_\lambda(i,j)$, preserving the vertical order.
%where $C =  A_\lambda(i,j)\cup B_\lambda(i,j)$. 
We call~\eqref{eq:snake} a \emph{snake relation}, because of the shape formed by the boxes in~$A_\lambda(i,j) \cup
B_\lambda(i,j')$
when $j' = j+1$.
It is critical to the proofs of 
Theorem~\ref{thm:BCV-N},~\ref{thm:Brion} and~\ref{thm:maxls}.

 %In this case $A$ and $B$ are determined by $(i,j)$.
%To keep track of successive applications of snake relations, we use
%the order in Definition~\ref{defn:totalorder}.
It is convenient to choose the coset
representatives $\phi_1, \ldots, \phi_\ell$ so that each $\phi_c$
is a product of transpositions swapping boxes in $A_\lambda(i,j)$ and $B_\lambda(i,j)$,
preserving the relative vertical order of boxes in each set.

\begin{example} Let $\lambda = (2,2,1)$.
The snake relation for $A_{\lambda}(2,1) = \{(2,1),(3,1)\}$ and $B_{\lambda}(2,2) = \{(1,2),(2,2)\}$
and a $\lambda$-tableau $t$.
has five summands on its right-hand side. Depending on $t$,
some of these summands may vanish. For example
\[ F\left( \;\young(11,32,4)\, \right) = F\left( \;\young( 11,23,4)\, \right) + F \left( \;\young(11,34,2)\, \right) \]
because, taking coset representatives as suggested
above, the tableaux obtained by the transpositions
$(1,2) \leftrightarrow (2,1)$ and $(1,2) \leftrightarrow (3,1)$, and the double transposition
$(1,2) \leftrightarrow (2,1)$, $(2,2) \leftrightarrow (3,1)$ 
%$\bigl( (1,2),(2,1) \bigr)$, $\bigl( (1,2),(3,1) \bigr)$ and $\bigl( (1,2),(2,1) \bigr)\bigl((2,2),(3,1)\bigr)$
%vanish because 
have a repeated~$1$ in their first column. 
By~\eqref{eq:column},
applying the transposition $(2,1) \leftrightarrow (3,1)$ %\bigl((2,1),(3,1)\bigr)$ 
to the second summand switches its sign and
expresses the left-hand side as a linear
combination of semistandard $\GL$-polytabloids.
\end{example}

\begin{corollary}\label{cor:snake}
If $t$ is a $\lambda$-tableau with entries from $\B$
then $F(t)$ may be expressed as a 
$K$-linear combination of $\GL$-polytabloids $F(s)$ for semistandard $\lambda$-tableaux $s$ by applying
finitely many snake relations.
\end{corollary}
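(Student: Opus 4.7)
The plan is to prove this by a noetherian induction on the total order of Definition~\ref{defn:totalorder}, restricted to column-standard $\lambda$-tableaux whose entries lie in the finite multiset of entries appearing in $t$. A preliminary reduction handles arbitrary $t$: if some column of $t$ contains a repeated entry then \eqref{eq:column}, applied to the transposition swapping those two equal boxes, forces $F(t) = -F(t)$ and hence $F(t) = 0$; otherwise \eqref{eq:column} reduces $F(t)$ to $\pm F(t^\sharp)$, where $t^\sharp$ is the unique column-standard rearrangement of $t$. Thus it suffices to treat the case in which $t$ is itself column-standard.

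Fix such a $t$ and assume the statement holds for every column-standard $\lambda$-tableau $u > t$ with entries drawn from those of $t$. If $t$ is semistandard there is nothing to prove. Otherwise there exist a column $j$ and a row $i$ with $1 \le i \le \lambda'_{j+1}$ and $t_{(i,j)} > t_{(i,j+1)}$. I apply the snake relation~\eqref{eq:snake} with $A = A_\lambda(i,j)$ and $B = B_\lambda(i,j+1)$, using the coset representatives $\phi_1 = \id, \phi_2, \ldots, \phi_\ell$ described after~\eqref{eq:snake}, to obtain
\[ F(t) = -\sum_{c=2}^{\ell} F(t\phi_c)\sgn(\phi_c). \]

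The key step is to show that, after making each column of $t\phi_c$ strictly increasing with the compensating sign from~\eqref{eq:column} (and discarding the term if a repeat appears in some column), the resulting column-standard tableau $s_c$ satisfies $s_c > t$. Since $t$ is column-standard with $t_{(i,j)} > t_{(i,j+1)}$, every entry of $A$ is at least $t_{(i,j)}$ and every entry of $B$ is at most $t_{(i,j+1)} < t_{(i,j)}$, so every entry of $A$ strictly exceeds every entry of $B$. Each $\phi_c$ with $c \ge 2$ swaps a nonempty subset of $A$ with an equally sized subset of $B$, so the multiset of entries in column $j+1$ of $t\phi_c$ strictly gains at least one element larger than every entry previously in that column, while columns strictly to the right of $j+1$ are unchanged. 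In the comparison $s_c$ versus $t$ prescribed by Definition~\ref{defn:totalorder}, the rightmost differing column is therefore column $j+1$, and the greatest entry not common to both versions lies in $s_c$. Hence $s_c > t$, and the inductive hypothesis gives $F(s_c)$, and so $F(t\phi_c)$, as a $K$-linear combination of $\GL$-polytabloids indexed by semistandard $\lambda$-tableaux; substituting back yields the same for $F(t)$.

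The main obstacle is the bookkeeping in the previous paragraph: in particular, verifying that each $\phi_c$ with $c \ge 2$ really does exchange a nonempty subset of $A$ with one of $B$ (which uses the stated choice of coset representatives preserving the relative vertical order within $A$ and within $B$), and that positions outside $A \cup B$ are untouched, so that the comparison in the total order reduces to the local analysis of column $j+1$. Everything else is routine, and the finiteness of the underlying poset guarantees termination of the induction.
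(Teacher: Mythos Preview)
Your proof is essentially the paper's: reduce to the column-standard case via~\eqref{eq:column}, then induct on the total order of Definition~\ref{defn:totalorder} by applying a snake relation at a row violation and showing each resulting column-standard tableau is strictly greater.

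One sentence needs adjusting. You assert that column $j+1$ of $t\phi_c$ ``strictly gains at least one element larger than every entry previously in that column'', but the entries of column $j+1$ in rows $i+1,\ldots,\lambda'_{j+1}$ are untouched by $\phi_c$ and may well exceed the entries imported from $A$. What is true, and all that Definition~\ref{defn:totalorder} requires, is that the entries gained (from $A$, each $\ge t_{(i,j)}$) are all strictly larger than the entries lost (from $B$, each $\le t_{(i,j+1)} < t_{(i,j)}$); hence the greatest element in the symmetric difference of the two column-$(j+1)$ entry sets lies in $s_c$, giving $s_c > t$. With this correction your argument is complete and coincides with the paper's.
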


\begin{proof}
By~\eqref{eq:column}, we may assume that $t$ is column-standard.
If $t$ is not standard then there exist $(i,j), (i,j+1) \in [\lambda]$
such that $t_{(i,j)} > t_{(i,j+1)}$. Let $A$, $B \subseteq [\lambda]$ be as in Lemma~\ref{lemma:GLgarnir},
taking $j' = j+1$. By~\eqref{eq:snake}, $F(t) = -\sum_{c=2}^\ell F(t\phi_c) \sgn(\phi_c)$ 
where each~$\phi_c$ swaps certain boxes in $A$ with certain boxes,
necessarily having smaller entries, in $B$.
Thus $t < t\phi_c$ for each $\phi_c$, where $<$ refers to the order in Definition~\ref{defn:totalorder}. 
The result now follows by induction.
\end{proof}

We say that $F(t)$ is \emph{straightened} by snake relations.
A related result to~\eqref{eq:snake}
 gives some control over the $F(s)$ that may appear in the straightening of $F(t)$;
it is needed in the proof of Theorem~\ref{thm:Brion}. To state it, we require two further
orders. 

Define a \emph{composition} of $n \in \N_0$
of length $\ell$ to be an element $\beta \in \N_0^\ell$
such that $\sum_{i=1}^\ell \beta_i = n$. 
We set $\ell(\beta) = \ell$. We extend the dominance order from partitions to compositions
in the obvious way, by setting 
$\beta \,\unrhd\, \gamma$ if $\beta_1 + \cdots + \beta_i
\ge \gamma_1 + \cdots + \gamma_i$ for all $i \in \N$.
 (As usual, if $i$ exceeds
the number of parts of $\beta$ or $\gamma$ then the corresponding part is taken to be $0$.)

\begin{definition}\label{defn:dom}
 Given a row-semistandard tableau $t$
with entries from $\{1,\ldots, d\}$, and $b \in \{1,\ldots, d\}$,
let $t^{\le b}$ be the composition $\gamma$ defined by $\gamma_i = \bigl| \{j : 1 \le j \le \lambda_i,
t_{(i,j)} \le b \} \bigr|$. % for each $b \in \{1,\ldots,d\}$.
If $u$ is a row-semistandard tableau with entries also from $\{1,\ldots, d\}$
and of
the same shape as $t$, we say that~$t$ \emph{dominates} $u$,
and write $t \unrhd u$ if $t^{\le b} \unrhd u^{\le b}$ for all $b \in \{1,\ldots, d\}$.
We extend this order to tableaux with entries from an arbitrary totally ordered set~$\B$ 
by the unique order-preserving bijection between $\{1,\ldots,|\B|\}$ and $\B$.
%where $d = |\B|$.
\end{definition}

\begin{definition}\label{defn:rowstraightening}
Given a tableau $t$ with entries from $\N$, let $\overline{t}$ be the row-semistandard
tableau obtained by sorting the rows of $t$ into non-decreasing order.
\end{definition}

%The following proposition is required only for tableaux with entries from $\{1,\ldots, d\}$.

%NB need arbitrary B since are proving basis result for plethysm

\begin{proposition}\label{prop:boundGL}
Let $t$ be a column-standard $\lambda$-tableau with entries from $\{1,\ldots,d\}$.
Then $\overline{t}$
is semistandard and
\[ F(t) = F(\overline{t}) + w \]
where $w$ is an integral linear combination of $\GL$-polytabloids $F(s)$ for
semistandard $\lambda$-tableaux $s$ such that $\overline{t} \rhd s$.
\end{proposition}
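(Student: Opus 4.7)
My plan is to expand $F(t)$ in the $\GL$-tabloid basis of $\Sym^\lambda V$, identify $f(\overline{t})$ as the $\rhd$-leading term, and then translate this into the desired expansion in the semistandard basis of $\nabla^\lambda(V)$ supplied by Corollary~\ref{cor:snake}. The first task is to verify that $\overline{t}$ is semistandard. Its rows are weakly increasing by construction, so I only need column-strictness, which I would reduce to the rearrangement lemma that $a_k < b_k$ for $k = 1,\ldots,n$ forces the $j$-th smallest $a$ to be strictly less than the $j$-th smallest $b$. Applied to the first $\lambda_{i+1}$ entries of rows $i$ and $i+1$ of $t$ (which interlace strictly by column-standardness), combined with the observation that the $j$-th smallest entry of all of row $i$ is at most the $j$-th smallest of its initial segment, this gives $\overline{t}_{(i,j)} < \overline{t}_{(i+1,j)}$.

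Next, since $f(t') = f(\overline{t'})$ for any tableau $t'$, the identity
\[ F(t) = \sum_{\sigma \in \CPP(\lambda)} f(\overline{t\sigma})\,\sgn(\sigma) \]
holds. The crux of the proof is the claim that $\overline{t\sigma} \unlhd \overline{t}$ for every $\sigma \in \CPP(\lambda)$, with equality only when $\sigma = e$. I would prove this column by column: $\sigma$ acts on column $j$ as some $\pi_j \in \SymG_{\lambda'_j}$ permuting the strictly increasing entries of that column, and for any threshold $b$ and row cutoff $r$ the contribution of column $j$ to $\sum_{i=1}^{r}(\overline{t\sigma})^{\le b}_i$ is $\bigl|\pi_j(\{1,\ldots,k\}) \cap \{1,\ldots,r\}\bigr|$, bounded above by $\min(k,r)$ (the corresponding contribution in $\overline{t}$), where $k$ counts entries $\le b$ in column $j$. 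Summing over columns yields $(\overline{t\sigma})^{\le b} \unlhd \overline{t}^{\le b}$ for every $b$, and equality for all $(b,r)$ forces each $\pi_j$ to preserve every initial segment, whence $\pi_j = \id$ and $\sigma = e$. Collecting terms gives $F(t) = f(\overline{t}) + \sum_{u \lhd \overline{t}} c_u f(u)$ with $c_u \in \Z$.

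Applying the same identity to a semistandard $s$ gives $F(s) = f(s) + \sum_{u \lhd s} c^{(s)}_u f(u)$, so distinct semistandard tableaux produce polytabloids with distinct $\rhd$-leading tabloids; hence the semistandard polytabloids are linearly independent, and with Corollary~\ref{cor:snake} they form a basis of $\nabla^\lambda(V)$, so I can write $F(t) = \sum_s d_s F(s)$ uniquely. A downward induction on $\rhd$ over the support of $(d_s)$ eliminates every $s \not\unlhd \overline{t}$: a $\rhd$-maximal such $s'$ would receive coefficient $d_{s'}$ at $f(s')$ in the tabloid expansion of $\sum_s d_s F(s)$, contradicting the previous step which forbids $f(s')$ from appearing in $F(t)$. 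Matching the coefficient of $f(\overline{t})$ then forces $d_{\overline{t}} = 1$, proving the proposition. The main obstacle is the column-by-column dominance estimate in the second step, which demands careful bookkeeping of how a column permutation redistributes the ``mass below threshold $b$'' across the rows; a naive induction directly on the snake relations of Corollary~\ref{cor:snake} would be messier, since the intermediate tableaux $t\phi_c$ produced there need not be column-standard and so the inductive hypothesis cannot be applied to them without an additional column-straightening step.
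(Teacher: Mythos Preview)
Your proof is correct and self-contained, but it takes a genuinely different route from the paper. The paper reduces to the case of distinct entries by replacing the $\beta_b$ occurrences of each $b$ with formal symbols $b^{(1)},\ldots,b^{(\beta_b)}$, invokes \cite[Proposition~4.1]{WildonBound} for that distinct-entry case, and then recovers the general statement via the functorial quotient map $v_{b^{(i)}}\mapsto v_b$. You instead work directly: the column-by-column dominance count establishes the leading-term identity $F(t)=f(\overline{t})+(\text{strictly }\rhd\text{-lower tabloids})$, and a triangularity argument against the semistandard basis (whose existence you deduce from Corollary~\ref{cor:snake} plus your leading-term fact) finishes the job. Your leading-term claim is exactly what the paper later isolates as Lemma~\ref{lemma:dom}, and your linear-independence step is the content of Proposition~\ref{prop:semistandardBasis}; so you have effectively folded those two later results into this proof. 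The paper's reduction is shorter given the external citation, while your approach is fully self-contained and makes the triangular structure explicit without relying on~\cite{WildonBound}.
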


\begin{proof}
We reduce to the analogous result for tableaux with distinct entries proved in \cite[Proposition 4.1]{WildonBound}. %We may assume that $\mathcal{B} = \{1,\ldots, d\}$.
Let $t$ have content $\beta$. Let $t_\star$ be
the tableau obtained from~$t$ by replacing each of the $\beta_b$ entries
of $t$ equal to $b$ with a \emph{symbol} $b^{(1)}, \ldots, b^{(\beta_b)}$, for 
each $b \in \{1,\ldots,d\}$.
We say that $b^{(i)}$ has \emph{number} $b$ and \emph{exponent} $i$. We order symbols
lexicographically, by number then exponent,
 so $b^{(i)} < c^{(j)}$ if and only if $b < c$ or $b=c$ and $i < j$. Thus $t_\star$ has distinct
 entries and is column-standard.
By Proposition~4.1 of \cite{WildonBound}, $\overline{t_\star}$ is standard.
Let $V_\star$ be the free $K$-module with basis vectors $v_{b^{(i)}}$ in bijection with symbols. 
Again by \cite{WildonBound}, now working 
in $\nabla^\lambda(V_\star)$, we have $F(t_\star) = F(\overline{t_\star}) + w$ where
$w$ is an integral linear combination of $\GL$-polytabloids $F(s)$ for standard~$\lambda$-tableaux
$s$ (each
having symbol entries).
The proposition now follows from functoriality: the quotient map $V_\star \rightarrow V$
sending $v_{b^{(i)}}$ to $v_b$ for each symbol $b^{(i)}$ corresponds to
replacing each symbol with its number.
\end{proof}

Corollary~\ref{cor:snake} also does most of the work to prove a well-known
basis theorem
for $\nabla^\lambda(V)$. We include the details since the following 
lemma is also needed
%lemma is 
in the proof of Theorem~\ref{thm:Brion}.
As a notational convenience, we extend the dominance order~$\rhd$ on row-semistandard tableaux to
$\GL$-tabloids by setting $f(t) \unrhd f(u)$ if and only
if~$\overline{t} \unrhd \overline{u}$.

%We extend the order $\unrhd$ in Definition~\ref{defn:dom} to $\GL$-tableaux
%by setting $f(t) \unrhd f(u)$ if and only if $t \unrhd u$.

%\begin{definition}
%Given $\GL$-tabloids $f(t)$ and $f(u)$ 
%we set $f(t) \unrhd f(u)$ if and only if $\overline{t} \unrhd \overline{u}$.
%\end{definition}
%
%Thus the basis of $\Sym^\lambda E$ of all $f(t)$ for $t$ a row-semistandard $\lambda$-tableau
%is ordered by $\unrhd$.

\begin{lemma}\label{lemma:dom}
Let $t$ be a column-standard $\lambda$-tableau with entries
from $\B$. Let $F(t) = f(t) + w$ where $w \in \Sym^\lambda(V)$.
If $u$ is a row-semistandard $\lambda$-tableau such that
$f(u)$ appears with non-zero coefficient when $w$ is written in the canonical basis~\eqref{eq:symBasis}
of $\Sym^\lambda(V)$, then
%$\overline{t} \rhd \overline{u}$.
$f(t) \rhd f(u)$.
\end{lemma}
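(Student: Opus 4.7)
The plan is to expand $F(t)$ in the canonical basis~\eqref{eq:symBasis} of $\Sym^\lambda(V)$ and analyse which row-semistandard tableaux can arise. By definition, and since $f$ depends only on the row-multisets of its argument,
\[ F(t) = \sum_{\sigma \in \CPP(\lambda)} f(t\sigma)\sgn(\sigma) = \sum_{\sigma \in \CPP(\lambda)} f(\overline{t\sigma})\sgn(\sigma). \]
Collecting terms gives $F(t) = \sum_u c_u f(u)$ with $c_u = \sum_{\sigma : \overline{t\sigma} = u}\sgn(\sigma)$. Since $f(t) = f(\overline{t})$, the coefficient of $f(u)$ in $w = F(t) - f(t)$ equals $c_u$ for $u \ne \overline{t}$, and $c_{\overline{t}} - 1$ when $u = \overline{t}$. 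Hence the lemma reduces to two claims: first, $\overline{t\sigma} \unlhd \overline{t}$ for every $\sigma \in \CPP(\lambda)$; second, $c_{\overline{t}} = 1$, equivalently, $\overline{t\sigma} = \overline{t}$ only when $\sigma = \id$.

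For the first claim, fix $b$ and let $k_j = k_j(b)$ denote the number of entries of column $j$ of $t$ that are $\le b$. Column-standardness of $t$ places these entries in rows $1,\ldots,k_j$, so in $t\sigma$ they sit in rows $\sigma_j(1),\ldots,\sigma_j(k_j)$, where $\sigma_j$ is the component of $\sigma$ on column $j$. For each $i$,
\[ \sum_{r=1}^{i} ((t\sigma)^{\le b})_r = \sum_j \bigl|\{l \le k_j : \sigma_j(l) \le i\}\bigr| \le \sum_j \min(k_j, i) = \sum_{r=1}^{i} (t^{\le b})_r, \]
since each inner set has at most $k_j$ elements and at most $i$ distinct values in $\{1,\ldots,i\}$. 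This proves $t^{\le b} \unrhd (t\sigma)^{\le b}$ for every $b$, which is exactly $\overline{t} \unrhd \overline{t\sigma}$.

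The main obstacle is the second claim. Equality $\overline{t\sigma} = \overline{t}$ forces equality in every partial-sum bound above. Fixing a column $j$ and taking $b = t_{(1,j)}$ makes $k_j = 1$, and the resulting equality at $i = 1$ forces $\sigma_j^{-1}(1) \le 1$, i.e.\ $\sigma_j(1) = 1$. Varying $j$ shows that the top cell of every column is fixed by $\sigma$. Stripping off row $1$ leaves a column-standard tableau of shape $(\lambda'_1 - 1, \lambda'_2 - 1, \ldots)'$ to which $\sigma$ restricts as a column permutation still satisfying the row-multiset identity, and induction on $\ell(\lambda)$ yields $\sigma = \id$.

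Combining the two claims, the non-vanishing terms of $w$ have the form $c_u f(u)$ with $u = \overline{t\sigma}$ for some $\sigma \ne \id$. For each such $u$ we have $\overline{t} \unrhd u$ and $u \ne \overline{t}$, so $\overline{t} \rhd u$, which is precisely $f(t) \rhd f(u)$ under the dominance order on $\GL$-tabloids.
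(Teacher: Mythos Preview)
Your proof is correct. The paper's argument is organised differently: it fixes a non-identity $\tau\in\CPP(\lambda)$ with $\overline{t\tau}=u$, observes that $t\tau$ must have a column inversion (two boxes $(i,j),(i',j)$ with $i<i'$ and $u_{(i,j)}>u_{(i',j)}$), swaps them to obtain $u^\star$ with $\overline{u^\star}\rhd\overline{u}$, and then inducts on the number of column inversions to conclude $\overline{t}\unrhd\overline{u^\star}\rhd\overline{u}$. Thus the paper gets the weak inequality and the strictness (equivalently your claim~2) in a single induction on inversions, whereas you prove the weak inequality by a direct count of the entries $\le b$ in the first $i$ rows and then handle the equality case separately by an induction on the number of rows. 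One small point worth making explicit in your claim~2: the equality at $i=1$ is a sum over all columns $j'$, and you are using that each summand satisfies $|\{l\le k_{j'}:\sigma_{j'}(l)\le 1\}|\le\min(k_{j'},1)$, so equality of the sums forces equality termwise; this is what isolates the conclusion $\sigma_j(1)=1$ for the chosen column $j$. Both approaches are short and rely on the same underlying observation that column-standardness packs the small entries of each column into its top rows; your counting argument has the advantage of giving the inequality $\overline{t}\unrhd\overline{t\sigma}$ in one line, at the cost of a second induction for uniqueness.
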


\begin{proof}
By definition $F(t) = \sum_{\tau \in \CPP} f(t\tau) \sgn(\tau)$. From the identity permutation
we get the summand $f(t)$. Suppose that $\tau$ is not the identity permutation. Then 
there
exist boxes $(i,j)$ and $(i',j) \in [\lambda]$ such that $i < i'$ and 
$(i,j)\tau = (k,j)$ and $(i',j)\tau = (k',j)$ with $k > k'$. We say that such
boxes form a \emph{column inversion} of~$\tau$. 
Let $\delta = \bigl( (i,j), (i',j) \bigr)$.
Let $c = t_{(i,j)}$ and
let $c' = t_{(i',j)}$. Note that since~$t$ is column standard, $c < c'$.
It is easily seen that if $c \le b < c'$
then the Young diagrams of $\overline{t}^{\le b}$ and \smash{$\overline{t\delta}^{\le b}$}
differ by a single box, moved down from row $i$ (the row of $c$ in $t$) to row $i'$ (the row of $c'$ in~$t$). 
For all other
$b$ we have \smash{$\overline{t}^{\le b} = \overline{t\delta}^{\le b}$}. 
Hence $\overline{t \delta} \lhd \overline{t}$. By induction on the number
of column inversions, we have
$\overline{t\tau} \unlhd \overline{t\delta}$. Hence $\overline{t\tau} \lhd \overline{t}$.
Therefore $f(t) \rhd f(t\tau)$, as required.
\end{proof}

We note this proof is essentially the same as that of Lemma~8.3 in \cite{James}, modified to use
place permutations. It is also possible
to reduce to this result by distinguishing equal entries of~$t$ by formal symbols,
as in the proof of Proposition~\ref{prop:boundGL}.

%It is also possible
%to reduce to this result by distinguishing equal entries of $t$ by formal symbols,
%as in the proof of Proposition~\ref{prop:boundGL}.

%The proof is essentially the
%same as that of the standard basis theorem in \cite[(8.5)]{James}, and can be replaced
%by the references in Remark~\ref{remark:earlier}, so is indicated quite briefly.
%
\begin{proposition}\label{prop:semistandardBasis}
The set $\{F(s) : s \in \SSYT_{\B}(\lambda) \}$ 
is a  $K$-basis
for~$\nabla^\lambda(V)$. %\hfill$\qed$
\end{proposition}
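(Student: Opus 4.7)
The plan is a standard ``straightening plus triangularity'' argument, combining the two preceding results.

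First, for spanning: $\nabla^\lambda(V)$ is by definition the $K$-span of all $\GL$-polytabloids $F(t)$ as $t$ ranges over arbitrary $\lambda$-tableaux with entries from $\B$. Corollary~\ref{cor:snake} asserts exactly that each such $F(t)$ can be rewritten as a finite $K$-linear combination of $F(s)$ with $s \in \SSYT_\B(\lambda)$, via repeated snake relations. Hence $\{F(s) : s \in \SSYT_\B(\lambda)\}$ spans $\nabla^\lambda(V)$.

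For linear independence I will argue by triangularity in the dominance order. If $s$ is semistandard then $s$ is already row-semistandard, so $\overline{s} = s$, and in the canonical basis \eqref{eq:symBasis} of $\Sym^\lambda(V)$ we may write
\[ F(s) = f(s) + \sum_{u} \alpha_{s,u} f(u), \]
where $u$ runs over row-semistandard $\lambda$-tableaux and, by Lemma~\ref{lemma:dom} applied with $t = s$, every $u$ occurring with $\alpha_{s,u} \neq 0$ satisfies $f(s) \rhd f(u)$, equivalently $\overline{s} = s \rhd \overline{u}$. Note also that distinct semistandard tableaux $s, s'$ give distinct $f(s), f(s')$, since $f$ is injective on row-semistandard tableaux.

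Now suppose $\sum_{s \in \SSYT_\B(\lambda)} c_s F(s) = 0$ with only finitely many $c_s$ nonzero, and that not all $c_s$ vanish. Choose $s_0$ maximal in the dominance order $\rhd$ among those $s$ with $c_s \neq 0$. Expanding in the basis \eqref{eq:symBasis}, the coefficient of $f(s_0)$ on the left-hand side is
\[ c_{s_0} \cdot 1 + \sum_{s \neq s_0,\, c_s \neq 0} c_s \alpha_{s,s_0}. \]
For each such $s \neq s_0$ with $c_s \neq 0$, if $\alpha_{s,s_0} \neq 0$ then $s \rhd s_0$, contradicting maximality of $s_0$. Hence the coefficient equals $c_{s_0}$, forcing $c_{s_0} = 0$, a contradiction. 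Thus $\{F(s) : s \in \SSYT_\B(\lambda)\}$ is linearly independent and so a $K$-basis of $\nabla^\lambda(V)$.

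The only mild subtlety is making sure one reads Lemma~\ref{lemma:dom} correctly: the ``lower terms'' in the expansion of $F(s)$ are strictly smaller than $f(s)$ in the dominance order on row-semistandard tableaux, which is exactly what makes the triangular elimination above work; no further combinatorial input is needed.
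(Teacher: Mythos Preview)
Your proof is correct and follows essentially the same approach as the paper's: spanning via Corollary~\ref{cor:snake}, then linear independence by choosing a dominance-maximal $s_0$ with nonzero coefficient and using Lemma~\ref{lemma:dom} to show that $f(s_0)$ cannot appear in any other $F(s)$ with $c_s \neq 0$. The only cosmetic difference is that you package the triangularity as an explicit expansion $F(s) = f(s) + \text{(lower terms)}$, whereas the paper argues directly with the dominance relation, but the logic is identical.
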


%\begin{proposition}\label{prop:semistandardBasis}
%The set $\{F(s) : s \in \SSYT_{\{1,\ldots, d\}}(\lambda) \}$ 
%is a  $K$-basis
%for~$\nabla^\lambda(E)$. %\hfill$\qed$
%\end{proposition}

\begin{proof}
By Corollary~\ref{cor:snake}, $\nabla^\lambda(V)$ is spanned by $\{F(s) : s \in \SSYT_\B(\lambda) \}$.
%{\{1,\ldots, d\}}(\lambda) \}$.
Let $v = \sum_{s \in \SSYT_{\B}(\lambda)} \alpha_s F(s)$ where 
not every coefficient is zero. 
Take~$s$ maximal in the dominance order on semistandard $\lambda$-tableaux such that $\alpha_s \not= 0$.
By Lemma~\ref{lemma:dom}, the coefficient of $f(s)$ in $F(s)$ is $1$.
Again by this lemma, if $s'$ is a semistandard $\lambda$-tableau other than $s$
such that $\alpha_{s'} \not= 0$, then $f(s') \unrhd f(t')$ 
for every $f(t')$ appearing in $F(s')$.
Hence, by maximality of $s$, we have %$\overline{t'} \not\!\hskip-1.75pt\unrhd\hskip3pt \overline{s}$.
$f(t') \not\!\hskip-1.75pt\unrhd\hskip3pt f(s)$.
Therefore the coefficient of $f(s)$ in $v$ is $\alpha_s$, and so $v\not=0$.
\end{proof}

%The linear independence of the $\GL$-polytabloids $F(s)$ for $s$ a semistandard $\lambda$-tableau
%may be proved using the dominance order on tableaux defined above,
%in close analogy with Lemma 8.2 of~\cite{James}. 
%\end{proof}

\subsection{Action of $\GL(E)$}\label{subsec:GLE}
We now suppose that $K$ is an infinite field and that 
$E$ is a $d$-dimensional $K$-vector space.
%Let $E$ be a $K$-vector space with basis $\{v_1,\ldots,v_d\}$. 
%Keeping the notation from the previous 
%section, $V$ is a $K$-vector space with basis $\{v_b : b \in \B\}$.
%Let $E$ be a $K$-vector space and
Suppose that $V$ is a polynomial
$\GL(E)$-module with basis, as in the previous section, $\{v_b : b \in \B\}$.
Let~$t \in \SSYT_\B(\lambda)$. The action of $g \in \GL(E)$
on $F(t) \in \nabla^\lambda(V)$, where $t$ is a $\lambda$-tableau with entries from~$\B$, is
determined by the multilinear construction in Definition~\ref{defn:polytabloids}.
The following method is convenient in calculations:
formally replace each entry $b$ in $t$ with $gv_b$,
expressed as a $K$-linear combination of~$\{ v_b : b \in \B \}$,
and then expand multilinearly.
For example, suppose that~$E$ is $3$-dimensional and $V$ is the natural
representation $E$, so we take $\mathcal{B} = \{1,2,3\}$.
Thinking of $\GL(E)$ as $3 \times 3$ invertible matrices, let 
\[ g = \left(\begin{matrix} \alpha & 0 & 0 \\ \delta & \beta & 0 \\ \epsilon & 0 & \gamma \end{matrix}\right) \in \GL(E).\]
Then in its action on $\nabla^{(2,2)}(V)$ we have
\begin{align*}
g F(\, \young(12,31)\, ) &\! = F\bigl(\, {\setlength{\arrayrulewidth}{0.65pt}
\begin{tabular}{|c|c|}\hline $\alpha v_1 + \delta v_2 + \epsilon v_3$ & $\beta v_2$ \\ \hline
$\gamma v_3$ & $\alpha v_1 + \delta v_2 + \epsilon v_3$ \\ \hline
\end{tabular}}\, \bigr) \\ 
&= \alpha^2 \beta \gamma F\bigl(\, \young(12,31)\, \bigr)
\cp \alpha \beta\gamma\delta F\bigl(\, \young(22,31)\, \bigr) %\\
%&  \qquad\qquad 
\cp \alpha \beta \gamma\epsilon  F\bigl(\, \young(12,33)\, \bigr)
\cp \beta \gamma \delta\epsilon F\bigl(\, \young(22,33)\, \bigr)
\end{align*}
where the first line should be interpreted entirely formally. One may
then use snake relations to express the right-hand side in the standard basis of $\nabla^{(2,2)}(V)$.

 In this example, $V$ had polynomial degree $1$.
In general, if the $\GL(E)$-module~$V$
has degree $p$, then, identifying $\GL(E)$ with $\GL_d(\C)$, 
the coefficients in $g F(t)$ have degree $pr$ in the matrix coefficients of $g \in \GL_d(\C)$.
Thus if $|\lambda| = r$ then
 $\nabla^\lambda : \GLEmod_p \rightarrow \GLEmod_{pr}$ is an endofunctor of $\GLEmod$.
(This was seen for the decomposition of $\Sym^{n} \bigl( \Sym^2 \! E \bigr)$
in Example~\ref{ex:geometricFoulkes}.)
For a further example, if $\dim E = 2$, $V = \Sym^2\! E$, $\lambda = (2,1)$
and  $v_1 = e_1^2, v_2=e_1e_2, v_3 = e_2^2$ then working in $\nabla^{(2,1)}\bigl( \Sym^2 (E) \bigr)$
we have
\begin{align*} \left( \begin{matrix} \alpha & \beta \\ \gamma & \delta \end{matrix} \right)
F\left( \, \young(13,2) \, \right) %\\ 
&= F\left(\hskip0.75pt
\raisebox{-15pt}{\scalebox{0.9}{$\ltableau{4.75cm}{0.17cm}{$\alpha^2 v_1 + 2\alpha\gamma v_2 + \gamma^2 v_3$}{$\beta^2v_1 + 2\beta\delta v_2 + \delta^2 v_3$}{$\alpha\gamma v_1 + (\alpha\delta + \beta\gamma)v_2 + \beta\delta v_3$}$}}\hskip0.75pt \right) \\
& = \alpha^2\beta^2(\alpha\delta+\beta\gamma) F(\left(\hskip0.5pt \young(11,2)\hskip0.5pt\right) +
 2\alpha^2 \beta\delta(\alpha \delta+\beta\gamma) F\left(\hskip0.5pt \young(12,2) \hskip0.5pt \right)
 \\ & \hskip0.75in + \alpha^2\delta^2(\alpha\delta+\beta\gamma)
F\left(\hskip0.5pt \young(13,2) \hskip0.5pt \right) + \cdots .
\end{align*}
where the coefficients have degree $6$. 

\subsection{Action of $\gl(E)$ and highest-weight vectors}\label{subsec:weights}
%Identifying $\GL(E)$ with $\GL_d(\C)$,
As in the previous subsection,
let $K$ be an infinite field and let $E$ be a $d$-dimensional $K$-vector space. 
Fix a basis $e_1, \ldots, e_d$ of $E$, and use it to identify $\GL(E)$ with $\GL_d(K)$.
Recall that if $V$ is a polynomial representation of $\GL(E)$ and~$\beta$
is a composition with $\ell(\beta) = d$ then a non-zero vector
$v \in V$ is a \emph{weight vector} of \emph{weight}~$\beta$ if 
\begin{equation}\label{eq:weight}
g v = g_{11}^{\beta_1} \ldots g_{dd}^{\beta_d}\hskip0.5pt v %\prod_{c=1}^d g_{cc}^{\beta_c}
\end{equation}
for all diagonal matrices $g \in \GL_d(K)$.
Let $V_\beta$ be the subspace of $V$ of weight vectors of weight $\beta$, together with $0$.
The \emph{formal character} of a polynomial representation~$V$ of $\GL(E)$
is the polynomial
\[ \Phi_V(x_1,\ldots,x_d) = \sum_{\beta} \dim (V_\beta) x_1^{\beta_1} \ldots x_d^{\beta_d} \]
where the sum is over all compositions $\beta$ such that $\ell(\beta) = d$.

%X \cdot v = (\sum_{c=1}^d
%\beta_c X_{cc})v\end{equation} 
%for all diagonal matrices $X \in \gl_d(\C)$.

%Given a $\lambda$-tableau $t$ and $g \in \GL(V)$, one may compute 

%Recall that if $V$ and~$W$ are $\gl(E)$-modules then
%the Lie algebra action of $\gl(E)$ on $V \otimes W$ and $\Sym^r V$
%is defined by linear extension of
%\begin{align}
%x\cdot (v \otimes w) &= (x \cdot v) \otimes w + v \otimes (x \cdot w),\label{eq:tensorAction} \\
%x\cdot (v_{i_1} \ldots v_{i_r}) &= (x \cdot v_1) v_2 \ldots v_{i_r} + \cdots + v_2 \ldots v_{i_{r-1}}
%(x \cdot v_r). \label{eq:symAction}
%\end{align}
%Since $\nabla^\lambda(E)$ is defined as a subspace of a tensor product of symmetric powers,
%these rules determine the action of $\gl(E)$ on $\nabla^\lambda(E)$.

\begin{lemma}\label{lemma:nabla} {\ }
\begin{thmlist}
\item If $s \in \SSYT(\lambda,\beta)$ then $F(s) \in \nabla^\lambda(E)$ has weight $\beta$.
%\item $F(t^{\lambda}) \in \nabla^\lambda (E)$ is a highest-weight vector of weight $\lambda$;
\item The formal character of $\nabla^\lambda(E)$ is $s_\lambda(x_1,\ldots,x_d)$.
\end{thmlist}
\end{lemma}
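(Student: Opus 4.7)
The plan is to prove (i) by direct computation and then use (i) together with the basis theorem (Proposition~\ref{prop:semistandardBasis}) to deduce (ii) from the combinatorial definition of $s_\lambda$.

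For part (i), take a diagonal matrix $g \in \GL(E)$ with diagonal entries $g_{11},\ldots,g_{dd}$, acting on the natural basis $v_1,\ldots, v_d$ of $E$ by $gv_b = g_{bb} v_b$. The action on $\Sym^\lambda(E)$ is multiplicative on tensor factors and on products within each factor, so if $t$ is any $\lambda$-tableau of content $\gamma$ then
\[ g\hskip0.5pt f(t) = \prod_{(i,j) \in [\lambda]} g_{t_{(i,j)},t_{(i,j)}} \, f(t) = g_{11}^{\gamma_1}\cdots g_{dd}^{\gamma_d} \, f(t). \]
Now observe that place permutations preserve content, so every $f(s\sigma)$ appearing in
\[ F(s) = \sum_{\sigma \in \CPP(\lambda)} f(s\sigma) \sgn(\sigma) \]
has the same content $\beta$ as $s$. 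Hence each summand is scaled by the same factor $g_{11}^{\beta_1}\cdots g_{dd}^{\beta_d}$, and $F(s)$ is a weight vector of weight $\beta$ in the sense of~\eqref{eq:weight}.

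For part (ii), Proposition~\ref{prop:semistandardBasis}, applied with $\B = \{1,\ldots,d\}$, shows that $\{F(s) : s \in \SSYT_{\{1,\ldots,d\}}(\lambda)\}$ is a basis of $\nabla^\lambda(E)$. Part (i) then implies that $\dim \nabla^\lambda(E)_\beta = |\SSYT(\lambda,\beta)|$ for every composition $\beta$ of $|\lambda|$ with $\ell(\beta) = d$. Therefore
\[ \Phi_{\nabla^\lambda(E)}(x_1,\ldots,x_d) = \sum_{\beta} |\SSYT(\lambda,\beta)|\, x_1^{\beta_1}\cdots x_d^{\beta_d} = \sum_{t \in \SSYT_{\{1,\ldots,d\}}(\lambda)} x^t, \]
which is exactly $s_\lambda(x_1,\ldots,x_d)$ by the definition of the Schur function in \S\ref{subsec:plethysm} (equivalently, by~\eqref{eq:Schur}, since $s_\lambda$ is symmetric and its monomial expansion groups the contributions from tableaux of each content into orbits of $\SymG_d$ acting on compositions).

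Both parts are essentially routine once Proposition~\ref{prop:semistandardBasis} is in hand; there is no substantial obstacle. The only point requiring a moment's care is the observation in (i) that place permutations preserve content, which is what allows the weight to be read off at the level of the single tableau $s$ rather than requiring cancellation among the summands of $F(s)$.
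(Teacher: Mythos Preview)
Your proof is correct and follows essentially the same approach as the paper: compute the diagonal action on $f(t)$, observe that place permutations preserve content so all summands of $F(s)$ scale identically, and then combine (i) with Proposition~\ref{prop:semistandardBasis} and the combinatorial definition of $s_\lambda$ to obtain (ii). The paper's version is simply more terse.
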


\begin{proof}
If $g \in \GL(E)$ is a diagonal matrix then $g f(s) = g_{11}^{\beta_1} \ldots g_{dd}^{\beta_d} f(s)$.
Since all the $\lambda$-tableaux appearing in $F(s)$ have the same content as $s$,
it follows that $F(s)$ has weight~$\beta$, proving (i).
Part (ii) follows from (i), Proposition~\ref{prop:semistandardBasis} and the definition
of Schur functions (see~\eqref{eq:Schur} in~\S\ref{subsec:plethysm}).
%A diagonal matrix
%$g \in \GL(E)$ acts on a weight vector of weight $\beta$ as $g_{11}^{\beta_1}\ldots g_{dd}^{\beta_d}$.
\end{proof}

In the proofs of the main theorems, it is easiest to consider $\nabla^\lambda(E)$ as a 
module for the Lie algebra $\gl(E)$ of $\GL(E)$.
Let $\beta$ be a composition of $n$ with $\ell(\beta) = d$.
Recall that if $V$ is a $\gl(E)$-module then  $v \in V$ is a \emph{weight vector}
of \emph{weight} $\beta$ if
%\emph{highest-weight} vector of weight $\beta$ if
\[ X \cdot v = (\beta_1 X_{11} + \cdots + \beta_d X_{dd}) v \]
for all diagonal matrices $X \in \gl(E)$. If this equation holds for all diagonal $X \in \b$,
and $X \cdot v = 0$ whenever $X$ is strictly upper-triangular,
then we say that $v$ is a \emph{highest-weight vector} of weight $\beta$.
Let $\b$ be the Borel subalgebra of $\gl(E)$ of upper-triangular matrices.
For $c \in \{2,\ldots, d\}$, let 
$X^{(c)} \in \b$ be the strictly upper-triangular matrix having $1$ in position $(c-1,c)$
and $0$ in all other positions. Since the Lie subalgebra of $\b$ of
strictly upper-triangular matrices is generated by $X^{(2)},\ldots, X^{(c)}$, 
we have the following lemma.

\begin{lemma}\label{lemma:hw}
Let $V$ be a $\gl(E)$-module. The vector $v \in V$ is a highest-weight vector
of weight $\beta$ if and only if $X \cdot v = (\beta_1 X_{11} + \cdots + \beta_d X_{dd})v$
% \sum_{c=1}^d \beta_c X_{cc}$ 
for~all diagonal matrices $X \in \gl(E)$ and $X^{(c)} \cdot v = 0$ for each $c \in \{2,\ldots, d\}$.
\hfill$\qed$
\end{lemma}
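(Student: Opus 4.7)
The plan is straightforward since the weight condition on diagonal matrices is identical on both sides of the equivalence; the entire content of the lemma is that the condition $X \cdot v = 0$ for every strictly upper-triangular $X \in \b$ is equivalent to $X^{(c)} \cdot v = 0$ for every $c \in \{2, \ldots, d\}$. The forward direction is immediate because each $X^{(c)}$ is itself strictly upper-triangular, so it vanishes on any highest-weight vector by definition.

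For the converse, let $E_{ij} \in \gl(E)$ denote the matrix with $1$ in position $(i,j)$ and zeros elsewhere, so $X^{(c)} = E_{c-1,c}$ and the $E_{ij}$ with $i<j$ form a basis of the strictly upper-triangular matrices. By linearity it suffices to show $E_{ij} \cdot v = 0$ for all $i < j$, which I would prove by induction on $j - i$. The base case $j - i = 1$ is exactly the hypothesis $X^{(j)} \cdot v = 0$. For the inductive step with $j - i \ge 2$, the matrix identity $[E_{i,j-1}, E_{j-1,j}] = E_{i,j}$ (a one-line calculation from $E_{ab}E_{cd} = \delta_{bc} E_{ad}$) yields
\[
E_{i,j} \cdot v \;=\; E_{i,j-1} \cdot (X^{(j)} \cdot v) \;-\; X^{(j)} \cdot (E_{i,j-1} \cdot v),
\]
and both terms vanish: the first by the hypothesis on $X^{(j)}$, the second by the inductive hypothesis applied to $E_{i,j-1}$ (whose row--column gap is smaller).

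There is no serious obstacle here; the lemma merely formalizes the observation, already recorded in the paragraph preceding its statement, that $X^{(2)}, \ldots, X^{(d)}$ generate the strictly upper-triangular Lie subalgebra of $\b$, so that detecting their action on $v$ is enough to detect the action of the whole subalgebra.
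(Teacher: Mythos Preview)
Your proof is correct and is precisely the argument the paper has in mind: the lemma is stated with a bare $\qed$ and no proof, relying on the sentence immediately before it that $X^{(2)},\ldots,X^{(d)}$ generate the strictly upper-triangular Lie subalgebra of $\b$. You have simply made that generation statement explicit via the commutator identity $[E_{i,j-1},E_{j-1,j}]=E_{ij}$ and induction on $j-i$, which is exactly what is required.
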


Using Lemma~\ref{lemma:hw} we establish the remaining basic properties of $\nabla^\lambda(E)$.
(Thus we take $V$ to be the natural representation $E$ of $\GL(E)$, and work in the basis $e_1, \ldots, e_d$.)
The main novel feature is the use of Proposition~\ref{prop:boundGL} to prove (v) and~(vi).

Recall that if $V$ and $W$ are $\gl(E)$-modules then the action of $\gl(E)$ on $V \otimes
W$ and $\Sym^r \! V$ is defined by linear extension of 
$x\cdot (v \otimes w) = (x \cdot v) \otimes w + v \otimes (x \cdot w)$ and
$x\cdot (v_{i_1} \ldots v_{i_r}) = (x \cdot v_{i_1}) v_{i_2} \ldots 
v_{i_r} + \cdots + v_{i_1} v_{i_2} \ldots v_{i_{r-1}}
(x \cdot v_{i_r})$.
The $\lambda$-tableau~$t^\lambda$ in (iv)
was defined in \S\ref{subsec:pts} by $t_{(i,j)} = i$ for each $(i,j) \in [\lambda]$.

\bigskip
\begin{proposition}\label{prop:nabla}
Let $s \in \SSYT_{\{1,\ldots,d\}}(\lambda)$ and let $c \in \{2,\ldots, d\}$.
\begin{thmlist} 
\item If $s$ has content $\beta$ then $F(s)$ is a weight vector of weight $\beta$ for the action of $\gl(E)$.
%$X \cdot F(s) = (\beta_1 X_{11} + \cdots + \beta_d X_{dd})F(t)$
%for all diagonal matrices $X \in \gl_d(\C)$.
\item $X^{(c)} \cdot F(s) = \sum_{t} F(t)$ where the sum is over all tableaux $t$ obtained
from~$s$ by changing a single entry from $c$ to $c-1$.
\item If every  $c$ in $s$ has a $c-1$  immediately above
it then $X^{(c)} \cdot F(s) = 0$.
\item $F(t^\lambda)$ is a highest-weight vector of weight $\lambda$.
\item Suppose that $s$ has a  $c$ not having  a  $c-1$ immediately above it.
Find the highest row of $s$ containing such an entry, and let $t$ be the tableau
obtained by changing the leftmost $c$ in this row to $c-1$. Then $t$ is semistandard
and $X^{(c)} \cdot F(s) = \epsilon F(t) + y$ where $\epsilon \in \N$ and
$y$ is an integral linear combination of $F(u)$ for semistandard $\lambda$-tableaux $u$
such that $t \rhd u$.
%$u \in \SSYT(\lambda,\delta)$ such that $t \rhd u$.
\item If $K$ has characteristic zero and $v \in \nabla^\lambda(E)$ is such that
$X^{(c)} \cdot v = 0$ for all $c \in \{2,\ldots, d\}$ then $v$ is a multiple of $F(t^\lambda)$.
%and $X^{(c)} \cdot $v \in \nabla^\lambda(E)$ is a highest-weight vector then $v$ is a multiple
%of $F(t^\lambda)$.
\item If $K = \C$ then $\nabla^\lambda(E)$ is irreducible.
\end{thmlist}
\end{proposition}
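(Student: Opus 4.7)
Parts (i)--(iv) all follow from the Leibniz rule for the $\gl(E)$-action on tensor products and symmetric powers, recalled just before the proposition. For (i), a diagonal $X \in \gl(E)$ scales each $v_b$ by $X_{bb}$, so $X \cdot f(s\sigma) = \bigl(\sum_b \beta_b X_{bb}\bigr) f(s\sigma)$ because $s\sigma$ has content $\beta$; summing over $\sigma \in \CPP(\lambda)$ with sign yields $X \cdot F(s) = \bigl(\sum_i \beta_i X_{ii}\bigr) F(s)$. For (ii), $X^{(c)}$ sends $v_c \mapsto v_{c-1}$ and annihilates all other basis vectors, so Leibniz gives $X^{(c)} \cdot f(s\sigma)$ as a sum over positions of $s\sigma$ carrying entry $c$ of tabloids with that entry swapped to $c-1$; summing over $\sigma$ with sign and reindexing $(i,j) = (i',j')\sigma$ yields the stated formula. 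Part (iii) is immediate from (ii): under the hypothesis each $\tilde s^{(i',j')}$ has two $c-1$'s in column $j'$, so $F(\tilde s^{(i',j')})=0$ by the remark after~\eqref{eq:column}. Part (iv) combines (i) (content of $t^\lambda$ is $\lambda$) with (iii) (every $c$ in $t^\lambda$ lies in row $c$, with a $c-1$ in row $c-1$ directly above).

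Part (v) is the technical core. From (ii), $X^{(c)} F(s) = \sum_{(i',j')} F(\tilde s^{(i',j')})$ summed over positions of $s$ with entry $c$. Those with $s_{(i'-1,j')}=c-1$ contribute zero; for the rest $s_{(i'-1,j')}\le c-2$, so $\tilde s^{(i',j')}$ is column-standard, and Proposition~\ref{prop:boundGL} gives $F(\tilde s^{(i',j')}) = F(\overline{\tilde s^{(i',j')}}) + (\text{combination of } F(u) \text{ with } \overline{\tilde s^{(i',j')}}\rhd u)$. All the row-sorted tableaux $\overline{\tilde s^{(i',j')}}$ differ from $s$ only in the profile at $b=c-1$, where row $i'$ gains one entry, so they lie in a single chain in dominance, maximized at the smallest admissible $i'$; this is the row $i$ of the statement. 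A short check using that $s_{(i-1,\cdot)}$ is weakly increasing shows the leftmost $c$ in row $i$ is automatically unprotected, and the resulting $t$ is then semistandard. The multiplicity $\epsilon\in\N$ counts the unprotected $c$'s in row $i$, and a parallel dominance comparison gives $t\rhd s$, a fact needed in (vi).

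For (vi), since each $X^{(c)}$ shifts weight by $e_{c-1}-e_c$, we reduce to $v \in V_\mu$ of pure weight. If $\mu=\lambda$, a short induction on~$i$ (row $i$ entries are $\ge i$, so row~$1$ is filled with $1$'s, then row~$2$ with $2$'s, and so on) shows $t^\lambda$ is the unique semistandard $\lambda$-tableau of content $\lambda$, whence $V_\lambda = K F(t^\lambda)$. For $\mu\ne\lambda$, expand $v = \sum_s \alpha_s F(s)$ in the basis of Proposition~\ref{prop:semistandardBasis}; using the triangular form from (v) together with a dominance induction on the support of the $\alpha_s$, we aim to force every $\alpha_s=0$. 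The main obstacle is that for a single extremal $s_0$ and a single $c$, contributions of $F(t_0)$ coming from distinct $s$'s with $\alpha_s\ne 0$ may collide; one must combine the constraints $X^{(c)}v=0$ across varying $c$, exploiting (iii) (for any $c$, some of the $s$ are killed outright when every $c$ in them is protected by a $c-1$ above) to isolate each $\alpha_s$ in turn. Since $\epsilon\in\N$ and $K$ has characteristic zero, the final step forces $\alpha_{s_0}=0$, a contradiction.

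Finally, (vii) can be deduced cleanly via characters: over $\C$, every $\gl(E)$-composition factor of $\nabla^\lambda(E)$ is an irreducible $L(\nu)$ with formal character $s_\nu$ by the Weyl character formula, so Lemma~\ref{lemma:nabla}(ii) gives $s_\lambda = \sum_i s_{\nu_i}$; linear independence of the Schur functions forces a single composition factor $L(\lambda)$, and hence $\nabla^\lambda(E)\cong L(\lambda)$ is irreducible.
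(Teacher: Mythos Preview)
Your treatment of parts (i)--(v) is correct and essentially matches the paper's argument, including the identification of $\epsilon$ as the number of unprotected $c$'s in row~$i$ and the use of Proposition~\ref{prop:boundGL}.

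Part (vi) has a genuine gap. You correctly identify the obstacle: when you pick an extremal $s_0$ in the support and apply $X^{(c)}$, other tableaux $s'$ in the support may also contribute $F(t_0)$ after straightening, so the coefficient of $F(t_0)$ in $X^{(c)}\cdot v$ need not isolate $\alpha_{s_0}$. But your proposed fix---``combine the constraints $X^{(c)}v=0$ across varying $c$, exploiting (iii)''---is not a proof; it is a hope. The paper resolves this with a single, carefully chosen $c$ and $s$: take $a$ \emph{minimal} such that some $s$ in the support has an entry $c>a$ in row $a$, then take $c$ \emph{minimal} with this property, then among such $s$ take one with the \emph{maximal} number $\epsilon$ of $c$'s in row $a$. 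With this choice, for any other $s'$ in the support and any column-standard $t'$ arising from $s'$ by a $c\to c-1$ swap, one checks $\overline{t'}^{\,\le c-1}$ cannot dominate $t^{\le c-1}$ on row $a$ (row $a$ of $t$ has strictly more entries $\le c-1$ than any such $\overline{t'}$), so by Proposition~\ref{prop:boundGL} the coefficient of $F(t)$ in $X^{(c)}\cdot F(s')$ vanishes. Hence the coefficient of $F(t)$ in $X^{(c)}\cdot v$ is exactly $\epsilon\gamma_s$, forcing $\gamma_s=0$ in characteristic zero. The point is that a single $X^{(c)}$ suffices once the extremal choice is made correctly; no combination across varying $c$ is needed.

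Your argument for (vii) is a valid alternative to the paper's. The paper uses (vi): the cyclic submodule generated by $F(t^\lambda)$ is irreducible (Fulton--Harris), and any complement (Weyl's theorem) would contain a highest-weight vector, contradicting (vi). Your character-theoretic route bypasses (vi) entirely, trading it for the Weyl character formula and the classification of irreducible polynomial $\GL(E)$-modules. This is cleaner if one is willing to import that machinery, but is somewhat circular in spirit within a paper whose aim is to develop $\nabla^\lambda(E)$ from scratch; the paper's route keeps the argument more self-contained given what has been built.
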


\begin{proof}
 Part (i) follows easily from the rules for the action of $\gl(E)$ 
and the definition of $f(t)$ as a tensor product of symmetric powers, 
in analogy with Lemma~\ref{lemma:nabla}(i).
%%(see Definition~\ref{defn:polytabloids})
%we have $X \cdot f(t) = \sum_{(i,j) \in [\lambda]} X_{t_{(i,j)}} f(t) =
%(\sum_{c=1}^d \beta_i X_{cc})f(t)$ for all diagonal matrices $X \in \gl_d(\C)$.
%All the $\GL$-tabloids $f(u)$ appearing in $F(t)$ have the same content as $t$, so
%it follows that $F(u)$ has weight $\beta$, proving (i). 
%(Alternatively, one may
%take a one-parameter subgroup $z \mapsto \exp(I + zX)$ and differentiate~\eqref{eq:weight},
%using Lemma~\ref{lemma:nabla}(i).)
Since $X^{(c)} \cdot v_c = v_{c-1}$ and $X^{(c)} \cdot v_b = 0$ if $b\not = c$, we also have
%\begin{align*} 
\[ X^{(c)} \cdot f(s) %&
= X^{(c)} \cdot 
\bigotimes_{i=1}^{\ell(\lambda)} \prod_{j=1}^{\lambda_i} e_{s_{(i,j)}} 
%&
= \sum_t \bigotimes_{i=1}^{\ell(\lambda)} \prod_{j=1}^{\lambda_i} e_{t_{(i,j)}}  %\\
%&
= \sum_t f(t). \] %\end{align*}
where the sums are over all tableaux $t$ obtained
from~$s$ by changing a single entry from $c$ to $c-1$.
This proves the analogue of (ii) for $\GL$-tabloids, and~(ii) now follows from
the definition of $F(s)$. 
By~\eqref{eq:column},
$F(t) = 0$ whenever~$t$ has a repeated entry in a column, so
(iii) follows from (ii). Now (iv) follows from the definition of $t^\lambda$ 
in~\S\ref{subsec:pts},~(i), (iii) and Lemma~\ref{lemma:hw}.

For (v), let row~$a$ be the row of $s$ containing the chosen entry 
$c$. 
By choice of~$a$, the tableau $t$ is semistandard. 
By (ii),
$X^{(c)} \cdot F(s) = F(t) + \sum_{t'} F(t')$
where the sum is over all tableaux $t'$ obtained from $s$ by changing a different $c$ to a $c-1$.
If this $c$ has a $c-1$ above it then $F(t') = 0$. If not, and this $c$ is in row $a$,
then by Proposition~\ref{prop:boundGL},
$F(t') = F(t) + z$ where $z$ is an integral linear combination of $F(u)$ for semistandard $\lambda$-tableaux
$u$ such that $t \rhd u$. In the remaining case $c$ is in a lower row than row $a$, and $t'$ is column
standard
with $t \rhd \overline{t'}$. Therefore, by Proposition~\ref{prop:boundGL},
%we have
\[ X^{(c)} \cdot F(s) = \epsilon F(t) + y \]
where $\epsilon$ is the number of entries $c$ in row $a$ of $t$ not having $c-1$ immediately
above them, and $y$ is as required. % in (v).

For (vi), let $v = \sum_{s \in \mathcal{S}} \gamma_s F(s)$ where $\mathcal{S} \subseteq \SSYT_{\{1,\ldots,d\}}(\lambda)$ and $\gamma_s \not= 0$ for each~$s \in \mathcal{S}$. 
Suppose that $v$ is not a multiple of 
$F(t^\lambda)$.
Choose $a$  minimal such that some
$s \in \mathcal{S}$ has an entry $c$ in row $a$ with $c > a$. Choose $c$ minimal with
this property and suppose that $s$ has $\epsilon$ such entries $c$, where $\epsilon$ is maximal. 
Thus rows $a-1$ and $a$
of~$s$ have the form
\begin{align*} 
& \begin{matrix} a^- & \ldots & a^- & a^- &  \ldots & a^-  & a^- & \ldots & a^- \\
                 a^{\phantom{-}}   & \ldots & a^{\phantom{-}}    & c^{\phantom{-}}    &   \ldots & c^{\phantom{-}}    & \ldots \end{matrix} \\[-15pt]
& \hskip0.925in              \underbrace{\hskip 0.72in}_{\epsilon}                 
\end{align*}
where $a^-$ denotes $a-1$. Replacing any $c$ in row $a$ of $s$ with $c-1$ gives a column-standard tableau;
moreover, replacing the leftmost such $c$ gives a semistandard $\lambda$-tableau. Let $t$ be this tableau.
%Let $t$
%be the semistandard $\lambda$-tableau obtained from $s$ by replacing the leftmost such $c$  with $c-1$.
Applying (v) to the summands of $X^{(c)} \cdot v$ we see that
\[ X^{(c)} \cdot F(s) = \epsilon %\gamma_s 
F(t) + y \]
where $y$ is is an integral linear combination of $F(u)$ for semistandard $\lambda$-tableaux
such that $t \rhd u$. Now suppose that $\gamma_{s'} \not= 0$. Let $t'$ be 
obtained from $s'$ by changing a single $c$ to $c-1$; we may suppose this $c$ has no $c-1$ above it,
and so $t'$ is column-standard. Then, comparing $\overline{t'}^{\le c}$ and $t^{\le c}$ on row $a$, our choice 
of $s$ implies that  
$\overline{t'} \not\!\!\unrhd \, t$. Therefore, by Proposition~\ref{prop:boundGL},
the coefficient of $F(t)$ in $F(\overline{t'})$ is zero. Hence the coefficient of $F(t)$ in 
$X^{(c)} \cdot v$ %F(s)$
is $\epsilon \gamma_s$. Since $K$ has
characteristic zero, this is a contradiction.

Finally, by Proposition 14.13 of \cite{FultonHarrisReps}, %Lemma~\ref{lemma:hw}, 
the $\gl(E)$-submodule $V$ of $\nabla^\lambda(E)$ generated by 
$F(t^\lambda)$ is irreducible. If $V$ is a proper submodule of $\nabla^\lambda(E)$ then, 
by Weyl's Theorem (see \cite[\S 6.3]{Humphreys} or \cite[Appendix~B]{ErdmannWildon}),
$V$ has
 a complementary submodule.
 By %the properties of $\nabla^\lambda E$ in 
Proposition 14.13 of \cite{FultonHarrisReps} this complement
  contains a highest-weight vector, contradicting (vi).
Hence $\nabla^\lambda(E)$ is irreducible. 
\end{proof}

We end with a result summarizing the remaining properties we need of polynomial $\GL(E)$-modules.

\begin{proposition}\label{prop:hw}
Let $V$ be a polynomial $\GL(E)$-module of degree $r$.
\begin{thmlist}
\item $V$ contains a highest-weight vector.
\item If $v \in V$ is a highest-weight vector of weight $\lambda$
then $v$ generates a submodule of $V$ isomorphic to $\nabla^\lambda(E)$.
\item $V$ is isomorphic to a direct sum of certain $\nabla^\lambda(E)$ for $\lambda \in \Par(r)$.
\item Let $\Phi_V$ be the formal character of $V$. If $\lambda \in \Par(r)$ has at most $\dim E$ parts 
then $\langle \Phi_V, s_\lambda \rangle = [V : \nabla^\lambda(E)]$.
\end{thmlist}

\end{proposition}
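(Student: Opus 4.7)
The plan is to deduce all four parts by combining complete reducibility, established via the unitary trick, with the irreducibility and highest-weight analysis of $\nabla^\lambda(E)$ from Proposition~\ref{prop:nabla}. For (iii), I would restrict $V$ to the compact unitary subgroup of $\GL(E)$ (recall $K = \C$): the matrix coefficients of $V$ are polynomial and so continuous on this compact subgroup, and averaging a positive-definite Hermitian form with Haar measure produces an invariant form. Hence $V$ is completely reducible as a unitary representation, and because the unitary group is Zariski-dense in $\GL(E)$, every invariant subspace is $\GL(E)$-invariant, so $V$ decomposes as a direct sum of irreducible polynomial $\GL(E)$-modules. Any weight in a degree-$r$ polynomial representation is a composition of $r$, so each irreducible summand has highest weight some $\lambda \in \Par(r)$; the theorem of the highest weight (Fulton--Harris Proposition~14.18) together with Proposition~\ref{prop:nabla}(vii) then identifies each such summand with $\nabla^\lambda(E)$.

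For (i), Lie's theorem applied to the solvable Borel subalgebra $\b \subseteq \gl(E)$ yields a common eigenvector in the finite-dimensional space $V$, which is a highest-weight vector by Lemma~\ref{lemma:hw}. For (ii), let $v \in V$ be a highest-weight vector of weight $\lambda$, use (iii) to write $V = \bigoplus_i V_i$ as a direct sum of irreducibles, and decompose $v = \sum_i v_i$ by the corresponding $\gl(E)$-equivariant projections. Each nonzero $v_i$ is a highest-weight vector of weight $\lambda$ in the irreducible $V_i$, so $V_i \cong \nabla^\lambda(E)$ via an isomorphism $\phi_i$ chosen to send $F(t^\lambda)$ to $v_i$. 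Summing gives a nonzero $\gl(E)$-homomorphism $\phi : \nabla^\lambda(E) \to V$ with $\phi(F(t^\lambda)) = v$; since the source is irreducible by Proposition~\ref{prop:nabla}(vii), $\phi$ is injective, and its image is precisely the submodule generated by $v$ (which coincides with the $\GL(E)$-submodule generated by $v$ because $\GL(E)$ is connected). Hence this submodule is isomorphic to $\nabla^\lambda(E)$.

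Finally, (iv) follows from (iii) and Lemma~\ref{lemma:nabla}(ii): writing $V \cong \bigoplus_\mu \nabla^\mu(E)^{a_\mu}$ with the sum over $\mu \in \Par(r)$ of length at most $d = \dim E$, the formal character is $\Phi_V = \sum_\mu a_\mu s_\mu(x_1,\ldots, x_d)$, and taking the Hall inner product with $s_\lambda$ extracts $a_\lambda = [V : \nabla^\lambda(E)]$ by orthonormality of the Schur basis. The main obstacle is step (ii): identifying the abstract cyclic module generated by a highest-weight vector with our concrete model $\nabla^\lambda(E)$ rests on the theorem of the highest weight. A more self-contained route would exhibit $\nabla^\lambda(E)$ via explicit generators and relations (using the snake relations of Corollary~\ref{cor:snake}) and verify that these relations hold for $v$ directly, but invoking Fulton--Harris is much cleaner here.
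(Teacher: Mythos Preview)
Your proof is correct and covers the same ground as the paper's, which simply cites Fulton--Harris Proposition~14.13 together with Proposition~\ref{prop:nabla}(vii) for (i) and (ii), Weyl's Theorem for (iii), and then reduces (iv) to the irreducible case via orthogonality of Schur functions and Lemma~\ref{lemma:nabla}. The main organizational difference is that you establish (iii) first (spelling out the unitary trick rather than citing Weyl's Theorem) and then deduce (ii) by projecting the highest-weight vector onto the irreducible summands, whereas the paper obtains (ii) directly from the highest-weight theory in Fulton--Harris before invoking complete reducibility. Your route has the advantage that the argument for (ii) is made explicit rather than buried in the citation; the paper's route is terser and keeps the logical dependencies closer to the order of the statement. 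Both ultimately rest on the same external inputs (complete reducibility and the classification of irreducibles by highest weight), so neither is materially more elementary.
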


\begin{proof}
Parts (i) and (ii) follow from 
Proposition 14.13 of \cite{FultonHarrisReps} and
%the properties of $\nabla^\lambda E$ in 
Proposition \ref{prop:nabla}(vii); (iii) then follows from
 Weyl's Theorem 
(see \cite[\S 6.3]{Humphreys} or \cite[Appendix~B]{ErdmannWildon}). 
By (iii) it suffices to prove (iv) when $V$ is irreducible, in which case it is immediate
from the orthogonality of Schur functions and Lemma~\ref{lemma:nabla}.
\end{proof}

\begin{remark}\label{remark:James}
Our definition of $\GL$-tabloids and $\GL$-polytabloids is in deliberate
analogy with the definitions of tabloids and polytabloids in \cite[(3.9), (4.3)]{James}.
In \cite[Ch.~26]{James}, James uses his results on symmetric groups to
construct a polynomial representation $W^\lambda$ of $\GL(E)$, defined over an arbitrary field $K$.
% isomorphic (over an arbitary field)
%to $\nabla^\lambda(E)$.

Let
 $\mathcal{G}_\otimes: \SymG_r\text{-mod} \rightarrow \GLEmod_r$ be the inverse Schur functor
 sending
 a $\SymG_r$-module $U$ to $E^{\otimes r} \otimes_{K\SymG_r} U$. (Here $E^{\otimes r}$ 
 is regarded as a right $KS_r$-module by place permutation on tensors.)
James' module $W^\lambda$
may be defined as the image of $\mathcal{G}_{\otimes}(S^\lambda)$ under the canonical
map $\mathcal{G}_\otimes S^\lambda \rightarrow \mathcal{G}_\otimes M^\lambda$
induced by the inclusion $S^\lambda \rightarrow M^\lambda$ of the Specht module~$S^\lambda$
into the Young permutation module~$M^\lambda$. It follows from the definition
of $\GL$-polytabloids and the isomorphism
$\mathcal{G}_\otimes(M^\lambda) \cong
\bigotimes_{i=1}^{\ell(\lambda)} \Sym^{\lambda_i}\!E$ that
$W^\lambda \cong \nabla^\lambda(E)$. 
By working throughout in $\bigotimes_{i=1}^{\ell(\lambda)} \Sym^{\lambda_i}\!E$ we avoid
the nasty technicality that, in general, $\mathcal{G}_\otimes(S^\lambda) \not\cong W^\lambda$.
For example, since $S^{(2)} \cong S^{(1,1)} \cong M^{(2)} \cong K$ when~$K$ has characteristic $2$, 
and $\mathcal{G}_\otimes(K) = (E^{\otimes 2})_{\otimes KS_2} K \cong \Sym^2 E$, we have
$\mathcal{G}_\otimes (S^{(2)}) \cong \mathcal{G}_\otimes (S^{(1,1)}) \cong \Sym^2 E \cong W^{(2)}$.
But $W^{(2)} \not\cong W^{(1,1)} \cong \bigwedge^2 E$.

For more recent work on inverse Schur functions we refer the reader to~\cite{HemmerNakano} and~\cite{McDowellInverseSchur}.
A related homological remark, which explains our notation, is that $\nabla^\lambda(E)$ is a co-standard
module in the sense of quasi-hereditary algebras: see \cite{DlabRingel}.
\end{remark}

\setcounter{section}{2}

%Have not proved that formal character determines module up to isomorphism, but don't need this for main theorems

\section{A model for $\nablanumuE$}
\label{sec:model}

Fix $\mu \in \Par(m)$ and $\nu \in \Par(n)$. 
Let $E$ be a complex vector space with basis $\u_1, \ldots, \u_d$.
Throughout this section let
$\B = \SSYT_{\{1,\ldots, d\}}(\mu)$ ordered by the total order $<$ in Definition~\ref{defn:totalorder}.

\begin{lemma}\label{lemma:numubasis}
The $\GL(E)$-module $\nabla^\nu\bigl( \nabla^\mu(E) \bigr)$ has as a canonical basis
the~set
%\begin{equation}\label{eq:numubasis} 
\[ \bigl\{ F(S) : S \in \SSYT_\B(\nu) \bigr\}. \]
%\end{equation}
of plethystic semistandard tableaux of shape $\mu^\nu$ whose $\mu$-tableau entries
each have entries from $\{1,\ldots, d\}$.
\end{lemma}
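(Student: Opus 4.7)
The plan is to apply Proposition~\ref{prop:semistandardBasis} twice in succession, treating $\nabla^\nu(\nabla^\mu(E))$ as $\nabla^\nu$ applied to a vector space whose canonical basis is itself indexed by semistandard tableaux.

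First I would invoke Proposition~\ref{prop:semistandardBasis} for the Schur functor $\nabla^\mu$ applied to $E$, taking $\B = \{1,\ldots, d\}$ with its natural order. This yields that $\{F(s) : s \in \SSYT_{\{1,\ldots,d\}}(\mu)\}$ is a basis of $V := \nabla^\mu(E)$. Reindexing, I set $v_s := F(s)$ for each $s \in \SSYT_{\{1,\ldots,d\}}(\mu)$, so $V$ is a $\mathbf{C}$-vector space with basis $\{v_b : b \in \B\}$ exactly in the sense required by Definition~\ref{defn:polytabloids}, where now $\B = \SSYT_{\{1,\ldots,d\}}(\mu)$ carries the total order from Definition~\ref{defn:totalorder}.

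Next I would apply Proposition~\ref{prop:semistandardBasis} a second time, now with shape $\nu$, coefficient ring $K = \mathbf{C}$, and index set $\B$ as above. This immediately gives that $\nabla^\nu(V) = \nabla^\nu\bigl(\nabla^\mu(E)\bigr)$ has $\{F(S) : S \in \SSYT_\B(\nu)\}$ as a $\mathbf{C}$-basis. By Definition~\ref{defn:pssyt}(i), an element of $\SSYT_\B(\nu)$ is by definition a semistandard $\nu$-tableau whose entries are semistandard $\mu$-tableaux with entries in $\{1,\ldots,d\}$, that is, a plethystic semistandard tableau of shape $\mu^\nu$ with $\mu$-tableau entries drawn from $\{1,\ldots,d\}$. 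This matches the description in the statement.

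The only point that requires genuine care (and, I expect, the main obstacle) is the bookkeeping around the total order on $\B$: Proposition~\ref{prop:semistandardBasis} is phrased for $V$ with basis indexed by a totally ordered set, so one must verify that the $F(s)$ are being indexed by $\B$ in a way compatible with Definition~\ref{defn:totalorder}, and that the column-standardness condition inside $\SSYT_\B(\nu)$ is exactly the one used in Definitions~\ref{defn:polytabloids} and~\ref{defn:pssyt}. Once this identification of the two uses of ``semistandard'' is made explicit, there is no further work: the functoriality of $\nabla^\nu$ on finite-dimensional free modules, together with the independence of Proposition~\ref{prop:semistandardBasis} from the particular choice of basis-indexing set, delivers the claim directly.
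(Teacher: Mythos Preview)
Your proposal is correct and matches the paper's own proof essentially verbatim: apply Proposition~\ref{prop:semistandardBasis} first with $V=E$ and $\B=\{1,\ldots,d\}$, then with $V=\nabla^\mu(E)$ and $\B=\SSYT_{\{1,\ldots,d\}}(\mu)$. The only difference is that you spell out the bookkeeping about the total order on $\B$ more explicitly than the paper does, which is fine.
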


\begin{proof}
By Proposition~\ref{prop:semistandardBasis}, applied with $V = E$ and $\mathcal{B} = \{1,\ldots, d\}$, 
$\nabla^\mu(E)$ has
%\begin{equation}
$%\label{eq:mubasis} 
\{F(t) : t \in \SSYT_{\{1,\ldots, d\}}(\mu) \}$ as a basis. %\end{equation}
The lemma now follows from
another application of Proposition~\ref{prop:semistandardBasis}, this time with $V = \nabla^\mu(E)$
and $\B = \SSYT_{\{1,\ldots, d\}}(\mu)$.
\end{proof}

As a notational guide, 
we use upper case letters to denote $\nu$-tableaux whose entries are
$\mu$-tableaux and upper case indices $I$ and $J$ to refer to their rows and columns.
 
By Definition~\ref{defn:polytabloids}, 
\begin{equation}\label{eq:polyTabloid1} F(S) 
= \!\!\!\sum_{\tau \in \CPP(\nu)} f(S \tau)  \sgn(\sigma). \end{equation}
Since a $\mu$-tableau entry $s = S_{\pairpq\tau^{-1}} \in \B$ corresponds to the 
basis vector $F(s)$ of $\nabla^\mu(E)$, we have
\[ f(S \tau) = \bigotimes_{I=1}^{\ell(\nu)} \prod_{J=1}^{\nu_I} 
F(S_{\pairpq\tau^{-1}})\sgn(\tau). \]
In turn, % setting $s = S_{\pairpq\tau^{-1}}$, we have
\[ F(S_{\pairpq\tau^{-1}} ) = \!\!\!\sum_{\sigma \in \CPP(\mu)} f(s  \sigma)\sgn(\sigma) 
=\!\!\! \sum_{\sigma \in \CPP(\mu)} \bigotimes_{i=1}^{\ell(\mu)} \prod_{j=1}^{\mu_i}
u_{s_{(i,j)\sigma^{-1}}} \sgn(\sigma). \]
Thus $F(S) \in \bigotimes_{I=1}^{\ell(\nu)} \Sym^{\nu_I}
\bigl( \bigotimes_{J=1}^{\ell(\mu)} \Sym^{\mu_J}(E) \bigr)$.
It will be convenient to define the \emph{weight} of a tableau $S \in \SSYT_\B(\nu)$,
denoted $\wt(S)$, to be the sum of the contents of its $\mu$-tableau entries.

For example, take $\B = \SSYT_{\{1,2,3\}}\bigl( (2,1) \bigr)$. If $\nu = (2,2)$ and $\mu = (2,1)$ and 
\[  \setlength{\arrayrulewidth}{.06em}
\makeatletter
\def\y@vr{\vrule height0.8\y@b@xdim width\y@linethick depth 0.3\y@b@xdim}
\def\y@setdim{%
  \ify@autoscale%
   \ifvoid1\else\typeout{Package youngtab: box1 not free! Expect an
     error!}\fi%
   \setbox1=\hbox{A}\y@b@xdim=1.8\ht1 \setbox1=\hbox{}\box1%
  \else\y@b@xdim=\y@boxdim \advance\y@b@xdim by -2\y@linethick
  \fi}
\makeatother
S = \begin{tabular}{|c|c|}  \hline
$\young(11,2)$\rule[-18pt]{0pt}{42pt} & $\young(11,2)$ \\
\cline{1-2}
$\young(11,3)$\rule[-18pt]{0pt}{42pt} & $\young(12,2)$ \\
\cline{1-2}
\end{tabular} \in \SSYT_\B\bigl( (2,2) \bigr)
\]
 then $\wt(T) = (7,4,1)$ and
%\begin{align*} F(T) &= F(s)F(s)\otimes F(t)F(u) - F(t)F(s)\otimes F(s)F(u)\\
%&\qquad -F(s)F(u)\otimes F(t)F(s) + F(t)F(u) \otimes F(s)F(s) \end{align*}
%where 
%\[ s = \young(11,2)\,, \quad t = \young(12,2)\,, \quad u = \young(11,3)\, . \]
\begin{align*}
 F(S) &=\, \scriptstyle F\bigl( \, \syoung{(11,2)}\, \bigr)F\bigl(\, \syoung{(11,2)}\, \bigr) 
\otimes F\bigl(\, \syoung{(11,3)}\, \bigr) F\bigl(\, \syoung{(12,2)}\, \bigr)
- F\bigl(\, \syoung{(11,3)}\, \bigr)F\bigl(\, \syoung{(11,2)}\, \bigr) 
\otimes F\bigl(\, \syoung{(11,2)}\, \bigr)F\bigl(\, \syoung{(12,2)}\, \bigr) \\ 
&\ \ \scriptstyle -\, F\bigl(\, \syoung{(11,2)}\, \bigr)F\bigl(\, \syoung{(12,2)}\, \bigr) 
\otimes F\bigl(\, \syoung{(11,3)}\, \bigr)F\bigl(\, \syoung{(11,2)}\, \bigr)  
+ F\bigl(\, \syoung{(11,3)}\, \bigr)F\bigl(\, \syoung{(12,2)}\, \bigr) \otimes
F\bigl(\, \syoung{(11,2)}\, \bigr)F\bigl(\, \syoung{(11,2)}\, \bigr) . \end{align*}
where $F\bigl(\, \young(11,2) \, \bigr) = \u_1^2\otimes \u_2 - \u_2\u_1 \otimes \u_1$, and so on.

\begin{proposition}\label{prop:nablaT}
Let $S \in \SSYT_\B(\nu)$.
\begin{thmlist}
\item If $\wt(S) = \beta$ then $F(S) \in \nabla^\nu\bigl( \nabla^\mu(E) \bigr)$ 
is a weight vector of weight $\beta$.
\item $X^{(c)} \cdot F(S) = \sum_{T} F(T)$ where the sum is over all $\nu$-tableaux $T$
obtained from~$S$ by changing a single $c$ to $c-1$ in a single $\mu$-tableau entry.
\end{thmlist}
\end{proposition}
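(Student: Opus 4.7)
The plan is to reduce each part to the corresponding statement of Proposition~\ref{prop:nabla} applied to the basis vectors $F(s) \in \nabla^\mu(E)$ for $s \in \B = \SSYT_{\{1,\ldots,d\}}(\mu)$, using that the $\gl(E)$-action on $V = \nabla^\mu(E)$ extends by the Leibniz rule to $\Sym^\nu V$ (and so to the submodule $\nabla^\nu(V)$). The basis vectors $v_b$ of $V$ appearing in Definition~\ref{defn:polytabloids} are precisely the $F(s)$ for $s \in \B$, so the factors of $f(S\tau)$ are weight vectors of $\gl(E)$ and the Leibniz rule applies directly.

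For \emph{(i)}, fix a diagonal matrix $X \in \gl(E)$. By Proposition~\ref{prop:nabla}(i), if $s \in \B$ has content~$\gamma(s)$ then $X \cdot F(s) = (\gamma(s)_1 X_{11} + \cdots + \gamma(s)_d X_{dd}) F(s)$, so $F(s)$ is a weight vector of weight $\gamma(s)$. The diagonal action on a tensor product of weight vectors adds weights, so
\[ f(S\tau) = \bigotimes_{I=1}^{\ell(\nu)} \prod_{J=1}^{\nu_I} F(S_{(I,J)\tau^{-1}}) \]
is a weight vector of weight $\sum_{(I,J) \in [\nu]} \gamma(S_{(I,J)\tau^{-1}})$. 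Since $S\tau$ has the same multiset of $\mu$-tableau entries as $S$, this sum equals $\wt(S) = \beta$. Each $f(S\tau)$ in~\eqref{eq:polyTabloid1} therefore has weight $\beta$, and so does $F(S)$.

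For \emph{(ii)}, $X^{(c)}$ acts on $V$ by Proposition~\ref{prop:nabla}(ii): for each $s \in \B$, $X^{(c)} \cdot F(s) = \sum_{t} F(t)$ where~$t$ ranges over $\mu$-tableaux obtained from $s$ by changing one $c$ to $c-1$. Applying the Leibniz rule to a factor $f(S\tau)$ gives
\[ X^{(c)} \cdot f(S\tau) = \sum_{(I_0,J_0) \in [\nu]} \sum_{t} \bigotimes_{I} \prod_{J} F\bigl( T_{(I,J)\tau^{-1}} \bigr), \]
where for each position $(I_0,J_0)$ and each $t$, the tableau $T$ agrees with $S\tau$ except at $(I_0,J_0)$, where the entry is replaced by $t$. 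Re-indexing via $(I_0',J_0') = (I_0,J_0)\tau^{-1}$, each pair $(T,\tau)$ corresponds to a $\nu$-tableau $\mathbf{T}$ obtained from $S$ by changing one $c$ to $c-1$ in the single $\mu$-tableau entry $S_{(I_0',J_0')}$, with $T = \mathbf{T}\tau$. Summing over $\tau \in \CPP(\nu)$ against $\sgn(\tau)$ and interchanging sums gives
\[ X^{(c)} \cdot F(S) = \sum_{\mathbf{T}} \sum_{\tau \in \CPP(\nu)} f(\mathbf{T}\tau)\sgn(\tau) = \sum_{\mathbf{T}} F(\mathbf{T}), \]
as claimed.

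The only point that requires care is the bookkeeping in \emph{(ii)}: one must verify that the derivation action, applied factor by factor in the expansion of $f(S\tau)$, commutes cleanly with the place-permutation action of $\CPP(\nu)$. This is automatic because the two actions operate on disjoint levels of the construction (the Lie algebra acts inside each tensor factor, while $\CPP(\nu)$ only rearranges factors), and the re-indexing $(I_0,J_0) \leftrightarrow (I_0,J_0)\tau^{-1}$ above makes this explicit.
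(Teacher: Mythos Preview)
Your proof is correct and is precisely the approach the paper takes: it says only ``Apply Proposition~\ref{prop:nabla}(i) and (ii) to~\eqref{eq:polyTabloid1}'', and you have spelled out exactly that computation. One cosmetic point: in your displayed formula for $X^{(c)}\cdot f(S\tau)$ the index should read $F(T_{(I,J)})$ rather than $F(T_{(I,J)\tau^{-1}})$ given your description of $T$, but your subsequent re-indexing to $\mathbf{T}$ with $T=\mathbf{T}\tau$ resolves this and the final conclusion is right.
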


\begin{proof}
Apply Proposition~\ref{prop:nabla}(i) and (ii) to~\eqref{eq:polyTabloid1}.
\end{proof}

In particular, the canonical basis defined in~Lemma~\ref{lemma:numubasis} 
for $\nablanumuE$ consists of weight 
vectors.

\begin{proposition}\label{prop:compositionPlethysm}
%If $\nu$ and $\mu$ are partitions then 
The formal character of $\nabla^\nu 
\bigl(\nabla^\mu(E)\bigr)$ is $(s_\nu \circ s_\mu)(x_1,\ldots, x_d)$.
\end{proposition}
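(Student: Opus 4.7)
The plan is to compute the formal character directly from the canonical basis supplied by Lemma~\ref{lemma:numubasis}, and then recognize the resulting sum as the combinatorial expansion of the plethysm.

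First, by Lemma~\ref{lemma:numubasis} the module $\nabla^\nu\bigl(\nabla^\mu(E)\bigr)$ has basis $\{F(S) : S \in \SSYT_\B(\nu)\}$, where $\B = \SSYT_{\{1,\ldots,d\}}(\mu)$. By Proposition~\ref{prop:nablaT}(i), each $F(S)$ is a weight vector of weight $\wt(S)$. Hence
\[ \Phi_{\nabla^\nu(\nabla^\mu E)}(x_1,\ldots,x_d) = \sum_{S \in \SSYT_\B(\nu)} x_1^{\wt(S)_1}\cdots x_d^{\wt(S)_d}. \]

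Next I would unpack the right-hand side using the definition of plethysm from \S\ref{subsec:plethysm}. By definition, $(s_\nu \circ s_\mu)(x_1,\ldots,x_d)$ is obtained by substituting the monomials $x^t$, for $t$ ranging over semistandard $\mu$-tableaux with entries from $\{1,\ldots,d\}$ (i.e.\ for $t \in \B$), into the variables of $s_\nu$. Using the formula $s_\nu = \sum_{S \in \SSYT_\B(\nu)} y^S$, where the indeterminates $y_t$ are indexed by $t \in \B$ and $y^S = \prod_{t \in \B} y_t^{m_t(S)}$ with $m_t(S)$ the number of occurrences of $t$ as an entry of $S$, the substitution $y_t \mapsto x^t$ yields
\[ (s_\nu \circ s_\mu)(x_1,\ldots,x_d) = \sum_{S \in \SSYT_\B(\nu)} \prod_{t \in \B}(x^t)^{m_t(S)}. \]

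Finally, I would observe that for each $S \in \SSYT_\B(\nu)$ the exponent of $x_b$ in $\prod_{t \in \B}(x^t)^{m_t(S)}$ counts, with multiplicity, the occurrences of $b$ across all $\mu$-tableau entries of $S$; this is precisely $\wt(S)_b$ as defined just before Proposition~\ref{prop:nablaT}. Hence both expressions agree term-by-term, completing the identification.

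There is no real obstacle here: the argument is purely bookkeeping, comparing the combinatorial definition of $s_\nu \circ s_\mu$ with the weight enumeration of the canonical basis. The only point requiring care is to restrict the tableaux indexing $s_\mu$ to those with entries in $\{1,\ldots,d\}$, which is automatic because setting $x_{d+1} = x_{d+2} = \cdots = 0$ kills all other monomials.
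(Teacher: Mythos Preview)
Your proposal is correct and follows essentially the same approach as the paper: both compute the formal character from the canonical basis of Lemma~\ref{lemma:numubasis} together with Proposition~\ref{prop:nablaT}(i), and then match the resulting sum $\sum_{S \in \SSYT_\B(\nu)} x^{\wt(S)}$ with the combinatorial definition of $(s_\nu \circ s_\mu)(x_1,\ldots,x_d)$. Your remark about setting $x_{d+1}=x_{d+2}=\cdots=0$ to restrict to $\mu$-tableaux with entries in $\{1,\ldots,d\}$ is a useful clarification that the paper leaves implicit.
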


\begin{proof}
By the definition of Schur functions
in~\eqref{eq:Schur} and the definition of plethysm given shortly afterwards,
$(s_\nu \circ s_\mu)(x_1,\ldots,x_d)$ is obtained by evaluating $s_\nu$ at the monomials $x^t$ for $t \in \B$.
Thus 
\[ (s_\nu \circ s_\mu)(x_1,\ldots,x_d) = \sum_{S \in \SSYT_\B(\nu)} x^{\wt(S)}. \]
It follows that if $\beta$ is a composition of $mn$ with $\ell(\beta) = d$
then the coefficient $x^\beta$ in $s_\nu \circ s_\mu$
is the number of $S \in \SSYT_\B(\nu)$ of weight $\beta$. By Lemma~\eqref{lemma:numubasis} and Proposition~\ref{prop:nablaT}(i),
this is the dimension of the $\beta$-weight space in $\nabla^\nu\bigl(\nabla^\mu(E)\bigr)$.
%The proposition follows.
\end{proof}

\section{Proof of Theorem~\ref{thm:BCV-N}}\label{sec:BCV-N}
We use the model for $\nablanumuE$ in 
\S\ref{sec:model}, taking $\dim E = d$ where $d \ge mn$. %Let $E$ have basis $u_1, \ldots, u_d$.
It will be convenient to number the rows of $[(r) \sqcup \mu]$ from $0$,
so that 
\[ [(r) \sqcup \mu] = \{(0,j) : 1 \le j \le r\} \cup [\mu]. \]
%\[ [(r) \sqcup \mu] = \{(0,1), \ldots, (0,r)\} \cup [\mu]. \]
Given a
$\mu$-tableau $t$ with entries from $\{1,\ldots, d\}$, let $\widetilde{t}$ be
the $(r) \sqcup \mu$-tableau with entries from $\{1, \ldots, d, d+1\}$ 
defined by
\[ \widetilde{t}_{(i,j)} = \begin{cases} t_{(i,j)} + 1 & \text{if $i \ge 1$} \\
1 & \text{if $i=0$.} \end{cases} \]
Thus $\widetilde{t}$ is obtained from $t$ by increasing each
entry by $1$ and then inserting a new row of $1$s of length $r$ at the top.

The following technical lemma shows that 
each snake relation satisfied by $F(t)$ gives a very similar relation
satisfied by $\Ftt$. We use this to show
in the proof of 
Proposition~\ref{prop:BCV-Nbijection} that $\Ftt$ can be straightened
in essentially the same way as $F(t)$.

\begin{lemma}\label{lemma:technical}
Let $t$ be a $\mu$-tableau with entries from $\{1,\ldots, d\}$.
Let $(i,j) \in [\mu]$ with $j < \mu_1$. 
Let $F(t) = -\sum_{c=2}^\ell F(t \phi_c) \sgn(\phi_c)$
be a snake relation as in~\eqref{eq:snake}, with $j' = j+1$.
Then
%Let $A_{(r) \sqcup \lambda}(i+1,j)$
%and $B_{(r) \sqcup \lambda}(i+1,j+1)$ be as defined in Lemma~\ref{lemma:snake}.
\[ \Ftt = -\sum_{c=2}^\ell  F(\widetilde{\,t\phi_c\,} ) \sgn(\phi_c). \]
\end{lemma}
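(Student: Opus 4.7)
The plan is to apply the Garnir-type relation of Lemma~\ref{lemma:GLgarnir} to $\widetilde{t}$ inside the shape $(r)\sqcup\mu$ using a slightly enlarged set $B$, and then use the fact that the top row of $\widetilde{t}$ consists entirely of $1$s to argue that the ``extra'' coset representatives contribute zero, leaving precisely the lift of the snake relation for $F(t)$ to $\widetilde{t}$. Throughout, the identity $\widetilde{t}\phi_c = \widetilde{\,t\phi_c\,}$, valid when $\phi_c$ is extended by the identity on row~$0$, follows directly from the definition of the tilde operation, since $\phi_c$ preserves $[\mu]$ and fixes row $0$.

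Concretely, I would take $A = A_\mu(i,j)$, viewed as a subset of $[(r)\sqcup\mu]$, and $B = \{(0,j+1)\} \cup B_\mu(i,j+1)$. Because $j+1 \le \mu_1 \le r$, the box $(0,j+1)$ lies in column $j+1$ of $[(r)\sqcup\mu]$, so $B$ is a subset of that column. One checks $|A \cup B| = (\mu_j' - i + 1) + (i+1) = \mu_j' + 2 = ((r)\sqcup\mu)_j' + 1$, so the pigeonhole step in the proof of Lemma~\ref{lemma:GLgarnir} applies verbatim (that proof does not use any feature of the sets $B_\lambda(i,j')$ beyond their lying in a single column together with the size condition on $|A\cup B|$). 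This yields $\sum_{\tau \in S_{A \cup B}} F(\widetilde{t}\,\tau)\sgn(\tau) = 0$, and hence the snake relation
\[ \Ftt = -\sum_{k \ge 2} F(\widetilde{t}\,\psi_k)\sgn(\psi_k) \]
for any choice of coset representatives $\psi_1 = \id, \psi_2, \ldots$ of $S_A \times S_B$ in $S_{A \cup B}$, which I would take in the standard form of products of row-order-preserving transpositions swapping boxes of $A$ with boxes of $B$.

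I would then split the $\psi_k$ according to whether they fix $(0, j+1)$. For $\psi_k$ moving $(0, j+1)$ to some $(a, j) \in A$, one has $(\widetilde{t}\psi_k)_{(a,j)} = \widetilde{t}_{(0,j+1)} = 1$, while $(\widetilde{t}\psi_k)_{(0,j)} = \widetilde{t}_{(0,j)} = 1$ because $(0,j) \notin A \cup B$ is fixed by $\psi_k$. Column~$j$ of $\widetilde{t}\psi_k$ thus has a repeated entry $1$, so $F(\widetilde{t}\psi_k) = 0$ by~\eqref{eq:column}. The remaining $\psi_k$, those fixing $(0, j+1)$, restrict to permutations of $A_\mu(i,j) \cup B_\mu(i, j+1)$ which range over exactly the coset representatives $\phi_c$ from the original snake relation for $F(t)$. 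Combining this with $\widetilde{t}\phi_c = \widetilde{\,t\phi_c\,}$ produces the stated identity.

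The main obstacle is essentially notational: one must extend Lemma~\ref{lemma:GLgarnir} to the non-standard set $B$ (which differs from $B_{(r)\sqcup\mu}(i, j+1)$ by the addition of~$(0, j+1)$) and match up the coset representatives on the two sides. The conceptual content --- that the preexisting $1$ at $(0, j)$ kills precisely those ``extra'' representatives that arise from enlarging $B$ by $(0, j+1)$ --- is an elegant consequence of the column antisymmetry of $\GL$-polytabloids.
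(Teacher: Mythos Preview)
Your proof is correct and essentially identical to the paper's: both enlarge $B_\mu(i,j+1)$ by the box $(0,j+1)$, apply the snake relation~\eqref{eq:snake} in the shape $(r)\sqcup\mu$, and kill the extra coset representatives via the repeated entry $1$ in column~$j$. The only cosmetic difference is that the paper keeps the given $\phi_c$ and extends them by further representatives $\phi_{\ell+1},\ldots,\phi_{\ell^+}$, whereas you choose fresh standard representatives $\psi_k$ and identify those fixing $(0,j+1)$ with the $\phi_c$; since both sides of the claimed identity depend only on the cosets (by~\eqref{eq:column}), this comes to the same thing.
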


\begin{proof}
Let $A = A_{\mu}(i,j)$ and $B = B_{\mu}(i,j+1)$ be as in~\eqref{eq:snake}.
Let $B^+ = B \cup \{(0,j+1)\}$.
(Recall that the rows of $[(r) \sqcup \mu])$ are numbered from~$0$.)
Let $C^+ = A \cup B^+$.
By hypothesis, $\phi_2, \ldots, \phi_\ell$ are representatives
for the proper left cosets of $S_A \times S_B$ in $S_{A \cup B}$. 
The permutations in $\phi_c(S_A \times S_{B^+})$ fixing $(0,j+1)$ 
are precisely the elements of $\phi_c(S_A \times S_{B})$. Therefore
the cosets $\phi_c(S_{A} \times S_{B^+})$
for $2 \le c \le \ell$ are disjoint.
 Let $\phi_1 = \id$ and choose further coset representatives
$\phi_{\ell+1}, \ldots, \phi_{\ell^+}$ such that
$\phi_1, \ldots, \phi_{\ell^+}$ is a full set of representatives for the left
cosets of $S_{A} \times S_{B^+}$ in $S_{C^+}$.
By~\eqref{eq:snake} we have
%\[
\begin{equation}\label{eq:technical}
%$
\Ftt = -\sum_{c=2}^{\ell^+} \Fttphic \sgn(\phi_c).
%$
\end{equation}
% \]
Suppose that $c > \ell$. Then $(0,j+1)\phi_c \in A$, and 
since $\widetilde{t}_{(0,j)} = 1$ and $\widetilde{t}_{(0,j+1)} = 1$,
it follows that
$\widetilde{t}\hskip1pt \phi_c$ has two entries equal to $1$ in column $j$. Therefore
$\Fttphic  = 0$ by~\eqref{eq:column}.
We may therefore replace the upper limit in the sum in~\eqref{eq:technical} with $\ell$.
%The coset representatives $\phi_1, \ldots, \phi_\ell$ all fix $(0,j+1)$, and so represent
%the cosets of $S_A \times S_{\{ (1,j+1), \ldots, (i,j+1) \}}$ in 
%$S_{A \cup B}$.
%%, where $B = B_\mu(i, j+1)$ as in Lemma~\ref{lemma:GLgarnir}.
%Since $\phi_1, \ldots, \phi_\ell$ fix all boxes in the new top row, 
%we have
%\[ 
%\widetilde{t} \phi_c = \widetilde{\, t \phi_c \,}
%%$
%%if $1 \le k \le \ell$.
%\quad\text{ if $1 \le k \le \ell$}. \]
After making this change,~\eqref{eq:technical} is precisely the relation we require.
\end{proof}

Recall from~\S\ref{sec:model} that $\B = \SSYT_{\{1,\ldots,d\}}(\mu)$
is ordered by the total order $<$ in Definition~\ref{defn:totalorder}.
Let $\Bp = \SSYT_{\{1,\ldots,d,d+1\}}((r) \sqcup \mu)$. % and
%let $\widetilde{\B} \subseteq \C$ 
%be the set of $\bigl( (r) \sqcup \mu)$-tableaux obtained by this construction.
Given $S \in \SSYT_\B(\nu)$, let~$\widetilde{S}$ be the $\nu$-tableau
defined by replacing each $\mu$-tableau entry $s$ of $S$ with~$\widetilde{s}$.
For $s$, $t \in \B$ we have
$s < t$ if and only if $\widetilde{s} < \widetilde{t}$. Hence
$\widetilde{S} \in \SSYT_\Bp(\nu)$.

%(Equivalently, $\widetilde{T}_{g,h)} = \widetilde{T_{(g,h)}}$.) 

Let $E^+ = E \oplus \langle \u_{d+1} \rangle$ be a $(d+1)$-dimensional complex vector space.
Recall that~$V_\lambda$ denotes the $\lambda$-weight space of a $\gl(E)$-module $V$.

\begin{lemma}\label{lemma:tildeiso}
The map $F(S) \mapsto F(\widetilde{S})$ defines a $\C$-linear isomorphism 
\[ \nablanumuE_\lambda \rightarrow \nablanumuEt_{(nr)\sqcup
\lambda}. \]
%the~$\lambda$ weight space of $\nablanumuE$ and
%the $(nr) \sqcup \lambda$ weight space of $\nablanumuEt$.
\end{lemma}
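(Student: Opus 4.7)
The plan is to work directly from the canonical bases of both weight spaces provided by Lemma~\ref{lemma:numubasis}, and to reduce the lemma to a bijection on indexing sets of semistandard tableaux.

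Applying Lemma~\ref{lemma:numubasis} to the domain and using Proposition~\ref{prop:nablaT}(i) (which says $F(S)$ is a weight vector of weight $\wt(S)$), one sees that $\nabla^\nu\bigl(\nabla^\mu(E)\bigr)_\lambda$ has basis
\[ \bigl\{ F(S) : S \in \SSYT_\B(\nu),\ \wt(S)=\lambda \bigr\}. \]
Applying Lemma~\ref{lemma:numubasis} again, now with $E$, $\mu$, $\B$ replaced by $E^+$, $(r)\sqcup\mu$, $\Bp$, one obtains analogously
\[ \bigl\{ F(T) : T \in \SSYT_\Bp(\nu),\ \wt(T)=(nr)\sqcup\lambda \bigr\} \]
as a basis of $\nabla^\nu\bigl(\nabla^{(r)\sqcup\mu}(E^+)\bigr)_{(nr)\sqcup\lambda}$. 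It therefore suffices to show that $S \mapsto \widetilde{S}$ restricts to a bijection between these two indexing sets; the linear map $F(S) \mapsto F(\widetilde{S})$ is then an isomorphism since it sends a basis bijectively to a basis.

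For the forward direction, the text preceding the lemma already notes that $\widetilde{S} \in \SSYT_\Bp(\nu)$, using that $s<t$ iff $\widetilde{s}<\widetilde{t}$ in the order from Definition~\ref{defn:totalorder}. The construction adds $nr$ new $1$s (the $n$ top rows, each of length $r$) and shifts every original entry up by $1$, so $\wt(\widetilde{S}) = (nr,\lambda_1,\ldots,\lambda_d)$ as a composition of length $d+1$. Since a $1$ in a semistandard $\mu$-tableau can occur only in row~$1$ (by column strictness) and row~$1$ has length $\mu_1$, one has $\lambda_1 \le n\mu_1 \le nr$ by the hypothesis of Theorem~\ref{thm:BCV-N}; thus $(nr,\lambda_1,\ldots,\lambda_d)$ is indeed the partition $(nr)\sqcup\lambda$. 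Injectivity of $S \mapsto \widetilde{S}$ is immediate, since $S$ is recovered by deleting row $0$ from each $\mu$-tableau entry and subtracting $1$ from every remaining entry.

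The main obstacle is surjectivity, but it reduces to a short pigeonhole argument. Given $T$ in the codomain index set, consider any $\bigl((r)\sqcup\mu\bigr)$-tableau entry $t$ of $T$. Column strictness forces every $1$ in $t$ to lie in row $0$, so $t$ has at most $r$ entries equal to~$1$. The $n$ tableau entries of $T$ contribute $nr$ ones in total, so each of them has exactly $r$ ones, which must fill row $0$. Hence every entry of $T$ has the form $\widetilde{t'}$ for a unique semistandard $\mu$-tableau $t'$ with entries in $\{1,\ldots,d\}$, and since $\widetilde{\,\cdot\,}$ preserves the total order we may assemble the $t'$s into a tableau $S \in \SSYT_\B(\nu)$ with $\widetilde{S}=T$ and $\wt(S)=\lambda$. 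This completes the bijection and therefore the proof. (Lemma~\ref{lemma:technical} is not needed for this step; its role is in the well-definedness arguments later in \S\ref{sec:BCV-N}.)
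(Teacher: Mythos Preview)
Your proof is correct and takes essentially the same approach as the paper's: both arguments reduce to the pigeonhole observation that each $((r)\sqcup\mu)$-tableau entry of a weight-$(nr)\sqcup\lambda$ element can have at most $r$ ones (all forced into row~$0$), so the $nr$ ones in total pin down row~$0$ of every entry and force $T=\widetilde{S}$. Your write-up is in fact slightly more thorough than the paper's, since you explicitly verify that $\wt(\widetilde{S})=(nr,\lambda_1,\ldots,\lambda_d)$ agrees with the partition $(nr)\sqcup\lambda$ via the bound $\lambda_1\le n\mu_1\le nr$, a point the paper leaves implicit.
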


%\begin{lemma}
%Let $v \in \nabla^\nu \bigl( \nabla^{(r) \sqcup \mu}(\widetilde{E}) \bigr)$ be a weight vector 
%of weight $(nr) \sqcup \lambda$. When expressed in the basis of weight vectors
%$\{F(T) : T \in \SSYT_\S(\nu)$ of $\nabla^\nu \bigl( \nabla^{(r) \sqcup \mu}(E) \bigr)$
%given by~\eqref{eq:numubasis}, the coefficient of $F(T)$ is zero 
% unless $T \in \widetilde{\mathcal{B}}$.
%\end{lemma}

\begin{proof}
Suppose that $v \in  \nablanumuEt$ is a weight 
vector of weight $(nr) \sqcup \lambda$. %then $v$ is in its image.
Let $T \in \SSYT_{\Bp}(\nu)$ and 
suppose that the coefficient of $F(T)$ in $v$ is non-zero.
By Proposition~\ref{prop:nablaT}(i), there are $nr$ entries equal to $1$ in the $\mu$-tableau entries 
of $T$.
Since $(r)$ is the largest
part of $((r) \sqcup \mu)$, each $T_{(g,h)}$ for $(g,h) \in [\nu]$ 
has at most $r$ entries equal to $1$. Therefore each $T_{(g,h)}$ has exactly $r$ entries 
equal to $1$, necessarily
lying in its longest row. Hence $T = \widetilde{S}$ for a unique $S \in \SSYT_\B(\nu)$,
and so the map is surjective. Since it sends basis elements to basis elements, it is injective.
\end{proof}

Let $\widetilde{v} \in \nablanumuEt_{(nr) \sqcup \lambda}$ 
denote the image of $v \in \nablanumuE_\lambda$ under the map in the previous lemma.

%Our aim is to show that $v \mapsto \widetilde{v}$ restricts
%to a bijection between highest-weight vectors. For this we need
%the following lemma on straightenings of the tableaux $\widetilde{t}$.
%
%\begin{lemma}
%Let $t$ be a $\mu$-tableau with
%entries from $\{1,\ldots, d\}$. 
%\end{lemma}

\begin{proposition}\label{prop:BCV-Nbijection}
The map $v \mapsto \widetilde{v}$ restricts to a bijection between highest-weight 
vectors in $\nablanumuE$ of weight $\lambda$ and 
highest-weight
vectors in $\nablanumuEt$ of weight $(nr) \sqcup \lambda$.
\end{proposition}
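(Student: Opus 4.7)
Since Lemma~\ref{lemma:tildeiso} already establishes that $v \mapsto \widetilde{v}$ is a $\C$-linear isomorphism between the weight spaces $\nablanumuE_\lambda$ and $\nablanumuEt_{(nr) \sqcup \lambda}$, it suffices to show this bijection restricts to highest-weight vectors. By Lemma~\ref{lemma:hw}, being highest-weight amounts to $X^{(c)} \cdot v = 0$ for $c \in \{2,\ldots,d\}$ on the source side and for $c \in \{2,\ldots,d+1\}$ on the target side. My plan is to (a) dispose of the extra operator $X^{(2)}$ on the target side by showing that it annihilates every $\widetilde{v}$ automatically, and then (b) establish an intertwining $X^{(c)} \cdot \widetilde{v} = \widetilde{X^{(c-1)} \cdot v}$ for $c \in \{3,\ldots,d+1\}$; here the tilde on the right-hand side is the natural extension of Lemma~\ref{lemma:tildeiso} to a neighbouring pair of weight spaces, for which the same proof adapts verbatim.

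For (a), I apply Proposition~\ref{prop:nablaT}(ii) to a basis vector $F(\widetilde{S})$: the result is a sum of $F(T')$ over tableaux $T'$ obtained from $\widetilde{S}$ by replacing one $2$ by $1$ in some $\mu$-tableau entry $\widetilde{s}$. Every $2$ in $\widetilde{s}$ lies at a position $(i,j)$ with $i \ge 1$ (since row $0$ is all $1$s) and $j \le \mu_i \le \mu_1 \le r$, so position $(0,j)$ already contains a $1$; the change creates a column with two $1$s, whose $F$-value vanishes by~\eqref{eq:column}, forcing $F(T') = 0$. For (b), taking $v = F(S)$, another application of Proposition~\ref{prop:nablaT}(ii) indexes the summands of $X^{(c)} \cdot F(\widetilde{S})$ by the $c$-entries in the $\mu$-tableau entries of $\widetilde{S}$. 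Since $c \ge 3$ these $c$s lie in rows $\ge 1$ and correspond, under the shift, to $(c-1)$-entries of the original $S$, setting up a bijection with the summands of $X^{(c-1)} \cdot F(S)$ via $T' = \widetilde{T}$ entry-by-entry. It remains to identify $F(\widetilde{T}) = \widetilde{F(T)}$ even when the modified $\mu$-tableau entry of $T$ is non-semistandard.

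This last identification is the crux. Lemma~\ref{lemma:technical} lifts each snake relation among $\mu$-tableaux $t$ to the corresponding relation among the $\widetilde{t}$, so by iterated straightening (Corollary~\ref{cor:snake}) the assignment $F(t) \mapsto F(\widetilde{t})$ extends to a well-defined $\C$-linear map $\iota_\mu : \nabla^\mu(E) \to \nabla^{(r)\sqcup\mu}(E^+)$ valid on every $\mu$-tableau, not only the semistandard ones. Applying $\iota_\mu$ termwise to each tensor factor and summing over $\CPP(\nu)$ shows that the induced map sends $F(T) \mapsto F(\widetilde{T})$ for every $\nu$-tableau $T$, supplying the missing identification and completing step~(b). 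Combining (a) and (b) with the injectivity from Lemma~\ref{lemma:tildeiso} yields both directions of the equivalence. The main obstacle is precisely the well-definedness of $\iota_\mu$: the $F(t)$ are linearly dependent, and one must check that each dependence survives the tilde operation, which is exactly the parallel straightening afforded by Lemma~\ref{lemma:technical}; once this is granted, the rest of the argument is a routine unwinding of Proposition~\ref{prop:nablaT}(ii).
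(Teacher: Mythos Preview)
Your proof is correct and follows essentially the same route as the paper: both arguments (a) dispose of $X^{(2)}$ on the target side by observing that every $2$ in $\widetilde{s}$ sits below a $1$ in row~$0$, and (b) match the action of $X^{(c+1)}$ on $F(\widetilde{S})$ with that of $X^{(c)}$ on $F(S)$ by invoking Lemma~\ref{lemma:technical} to run the straightening of the non-semistandard summands in parallel. Your packaging of step~(b) as a well-defined linear map $\iota_\mu$ sending $F(t)\mapsto F(\widetilde{t})$ for arbitrary $t$ is a slightly more conceptual way of saying what the paper expresses as ``apply \emph{exactly the same sequence of relations}'', but the content is identical.
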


\begin{proof}
Let 
\[ v = \sum_{S \in \SSYT_\B(\nu)} \alpha_S F(S) \in \nablanumuE_\lambda .\]
By definition, 
\[ \widetilde{v} = \sum_{S \in \SSYT_{\B}(\nu)} \alpha_S F(\widetilde{S}) 
\in \nablanumuEt_{\lambda + (nr)}. \]
Let $S \in \SSYT_\B(\nu)$.
 Since 
changing any $2$ to $1$ in a $((r) \sqcup \mu)$-tableau entry $\widetilde{s}$ of $\widetilde{S}$
gives a $((r) \sqcup \mu)$-tableau with two $1$s in the same column, Proposition~\ref{prop:nablaT}(ii) implies
that $X^{(2)} \cdot F(\widetilde{S}) = 0$.
Now let $c \in \{2,\ldots,d\}$. % and let $c^+ = c+1$.
Again by Proposition~\ref{prop:nablaT}(ii), 
$X^{(c)} \cdot F(S) = \sum_{T} F(T)$ where the sum is over all $T \in \SSYT_\mathcal{B}(\nu)$
obtained from $S$ by changing a single $c$ to $c-1$. Moreover,
$X^{(c+1)} \cdot F(\widetilde{S}) = \sum_{T} F(\widetilde{T})$ \emph{with the
same conditions on the sum}.

Suppose that
$v$
is a highest-weight vector.
By the previous paragraph and~\eqref{eq:polyTabloid1}, each summand
$F(T)$ appearing
in $X^{(c)} \cdot F(S)$ (respectively, each $F(\widetilde{T})$ appearing
in $X^{(c+1)} \cdot F(\widetilde{S})$)
is a sum of tensor products
of symmetric products of $F(u)$ (respectively $F(\widetilde{u})$)
for certain $\mu$-tableaux $u$ (respectively $((r) \sqcup \mu)$-tableaux $\widetilde{u}$),
at most one of which, say $t$ (respectively $\widetilde{t}\hskip0.5pt$), 
is non-semistandard. By Corollary~\ref{cor:snake}, we may straighten $F(t)$
to a linear combination of $F(s)$ for $s \in \SSYT_{\{1,\ldots, d\}}(\mu)$ 
by a sequence of snake relations~\eqref{eq:snake} swapping boxes between adjacent
columns. %indexed by boxes $(i,j) \in [\mu]$. 
By multilinearity,
this expresses $F(T)$ as a linear combination of $F(S)$ for $S \in \SSYT_\B(\nu)$.
Recall that the rows of $[(r) \sqcup \mu]$ are labelled from $0$.
By Lemma~\ref{lemma:technical} if we apply \emph{exactly the same sequence of relations}
to straighten $F(\widetilde{t})$, we express $F(\widetilde{T})$ 
as a linear combination of $F(\widetilde{S})$ for $S \in \SSYT_\B(\nu)$
\emph{with the same coefficients}. 
Hence $X^{(c)} \cdot v = 0$ implies $X^{(c+1)} \cdot \widetilde{v} = 0$.
By Lemma~\ref{lemma:hw}, $\widetilde{v}$ is a highest-weight vector.

Conversely, if $\widetilde{v}$
is a highest-weight vector then running this argument in reverse
shows that $X^{(c)} \cdot v = 0$ for $c \in \{2,\ldots, d\}$, and so $v$ is a highest-weight vector.
\end{proof}

By Proposition~\ref{prop:BCV-Nbijection} and
Proposition~\ref{prop:hw} we have
\[ [\nabla^\nu (\nabla^\mu E) : \nabla^\lambda E] = [\nabla^\nu (\nabla^{(r) \sqcup \mu})(E) :
\nabla^{(nr) \sqcup \lambda}(E)].\]
Theorem~\ref{thm:BCV-N} now follows using~\eqref{eq:schurnabla}.

\begin{remark}\label{remark:BCVnz}
We remark that since any plethystic semistandard tableau of shape~$\mu^\nu$
has at most~$n \mu_1$ integer entries of $1$, if $\lambda_1 > n \mu_1$
then both sides of the equation in Theorem~\ref{thm:BCV-N} are zero.
\end{remark}

\begin{remark}\label{remark:KahleMichalek}
In \cite[Lemma 3.2]{KahleMichalek} a proof of the special case $\mu = (1^m)$ and $r=1$ is indicated.
In our notation, the authors
consider $\nabla^\nu \bigl( \bigwedge^{m+1}(E) \bigr)$ as a submodule of $\bigl(\bigwedge^{m+1}(E)\bigr)^{\otimes n}$
and observe that each tensor summand in a highest-weight vector $v$ of weight $(n) \sqcup
\lambda$ is of the
form $(e_1 \wedge \cdots ) \otimes \cdots \otimes (e_1 \wedge \cdots )$. They define
a map into $\bigl(\bigwedge^m (E)\bigr)^{\otimes n}$ by removing $e_1$ from each tensor factor of $v$
and reducing the indices. This is essentially the inverse map to ours, in this special case. 
\end{remark}

%For $X^{(1)}$ this is clear, since get a repeated entry of $0$ in a column of a $\mu$-tableau
%entry. For $X^{(2)}, \ldots, X^{(d)}$ argue action is essentially the same, since 
%if straightening
%by a Garnir snake moves a $0$ left, immediately have repeated $0$ in a column. So
%outcome of the `Garnir straightening process', i.e.~a sequence of snake relations
%following Corollary~\ref{cor:straighten}, is the same.

\section{Proof of Theorem~\ref{thm:Brion}}\label{sec:Brion}

We adapt the strategy used to prove Theorem~\ref{thm:BCV-N}, again
working in the model $\nablanumuE$ from \S 3, now taking $\dim E = d$ where
$d \ge r$ and $d \ge \ell(\mu)$.

If $r \ge \ell(\mu)$ then set $\e=1$. Otherwise let $\e = \mu_{r+1}+1$.
% \in \N$ be minimal
%such that $\mu_{\e-1}' > r \ge \mu_\e'$. 
%Observe that $[\mu + (1^r)]$ is obtained from $[\mu]$ by 
%replacing each box $(i,j)$ for $j \ge c$ with $(i,j+1)$, and 
%inserting new boxes $(i,c)$ for $0 \le i \le c$. 
To relate %boxes in 
$[\mu]$
and $[\mu + (1^r)]$ we use the following notation.
Let $\altbox{i,0} = (i,\e)$ for $1 \le i \le r$ and for $(i,j) \in [\mu]$, let
\[  \altbox{i,j} = \begin{cases} (i,j) & \text{if $j < \e$} \\
(i,j+1) & \text{if $j \ge \e$.} \end{cases} \] 
As illustrated in~Example~\ref{ex:insertion}, we have
\[ [\mu + (1^r)] = \bigl\{ 
\altbox{i,0} : 1 \le i \le r \bigr\} \medcup \bigl\{ \altbox{i,j} : (i,j) \in [\mu] \bigr\}. \]
Given a $\mu$-tableau $t$ with entries from $\{1,\dots,d\}$, 
let $\bt{t}$ be the $(\mu + (1^r))$-tableau defined by
$\bt{t}_{[i,0]} = i$ for $1 \le i \le r$
and $\bt{t}_{[i,j]} = t_{(i,j)}$ if $j > 0$.
Thus 
$\bt{t}$ is  obtained from~$t$ by  inserting a new column $e$ with entries $1$, \ldots, $r$,
moving the existing column $e$ and other later numbered columns one position right. 

\begin{example}\label{ex:insertion}
If $r = 2$ and $\mu = (4,2,1)$ then $e= 2$;
the labels for the boxes in $[(4,2,1)]$ and $[(4,2,1) + (1,1)]$ are

\smallskip
\begin{center}
\begin{tikzpicture}[x=0.75cm,y=-0.75cm]
\draw (0,0)--(4,0); 
\draw (0,1)--(4,1);
\draw (0,2)--(2,2);
\draw (0,3)--(1,3);
\draw (0,0)--(0,3);
\draw (1,0)--(1,3);
\draw (2,0)--(2,2);
\draw (3,0)--(3,1);
\draw (4,0)--(4,1);
\node at (0.5,0.5) {$\scriptstyle (1,1)$};
\node at (1.5,0.5) {$\scriptstyle (1,2)$};
\node at (2.5,0.5) {$\scriptstyle (1,3)$};
\node at (3.5,0.5) {$\scriptstyle (1,4)$};
\node at (0.5,1.5) {$\scriptstyle (2,1)$};
\node at (1.5,1.5) {$\scriptstyle (2,2)$};
\node at (0.5,2.5) {$\scriptstyle (3,1)$};
\end{tikzpicture}\ \raisebox{36pt}{,}
\qquad
\begin{tikzpicture}[x=0.75cm,y=-0.75cm]
\draw (0,0)--(5,0); 
\draw (0,1)--(5,1);
\draw (0,2)--(3,2);
\draw (0,3)--(1,3);
\draw (0,0)--(0,3);
\draw (1,0)--(1,3);
\draw (2,0)--(2,2);
\draw (3,0)--(3,2);
\draw (4,0)--(4,1);
\draw (5,0)--(5,1);
\node at (0.5,0.5) {$\scriptstyle [1,1]$};
\node at (1.5,0.5) {$\scriptstyle [1,0]$};
\node at (2.5,0.5) {$\scriptstyle [1,2]$};
\node at (3.5,0.5) {$\scriptstyle [1,3]$};
\node at (4.5,0.5) {$\scriptstyle [1,4]$};
\node at (0.5,1.5) {$\scriptstyle [2,1]$};
\node at (1.5,1.5) {$\scriptstyle [2,0]$};
\node at (2.5,1.5) {$\scriptstyle [2,2]$};
\node at (0.5,2.5) {$\scriptstyle [3,1]$};
\end{tikzpicture}
\raisebox{36pt}{.}
\end{center}
Two pairs of a $\mu$-tableau $t$ and the corresponding $(\mu + (1^r))$-tableau $t^\star$ are 
\[ \left(\, \young(1123,23,4),\ \young(11123,223,4)\, \right),\quad
\left(\, \young(1123,33,4),\ \young(11123,323,4)\, \right). \]
%A $(4,2,1)$-tableau $t$ and the corresponding $((4,2,1) + (1^2))$-tableau $t^\star$ are 
%\[ \young(1123,33,4),\qquad \young(11123,323,4)\,. \]
%\[ \left(\, \young(1123,33,4),\ \young(11123,323,4)\, \right),\quad
%\left(\, \young(1123,23,4),\ \young(11123,223,4)\, \right). \]
\end{example}

%This construction and the notation for boxes are illustrated in Example~\ref{ex:Brion} below.

Given $\phi \in S_{[\mu]}$, let $\bt{\phi} \in S_{[\mu] + (1^r)}$ be defined by
$[i,0]\bt{\phi} = [i,0]$ for $1 \le i \le r$ and %, for $(i,j) \in [\mu]$,
%\[ 
$[i,j]\bt{\phi} = [i',j'] \iff (i,j)\phi = (i',j')$.
In analogy with Lemma~\ref{lemma:technical}, we now
show that $F(\bt{t})$ satisfies the appropriate conjugate of each snake relation 
(see~\eqref{eq:snake} after Lemma~\ref{lemma:GLgarnir}) satisfied
by $F(t)$.
 %\]
%The following lemma is the analogue of Lemma~\ref{lemma:technical}.

\begin{lemma}\label{lemma:technicalhat}
Let $t$ be a $\mu$-tableau with entries from $\{1,\ldots, d\}$.
Let $(i,j) \in [\mu]$ with $j < \mu_1$. 
Let $F(t) = -\sum_{c=2}^\ell F(t \phi_c) \sgn(\phi_c)$ be a snake
relation as in~\eqref{eq:snake}, with $j' = j+1$. Then
\[ F(\bt{t}) = -\sum_{c=2}^\ell F(\bt{t}\hskip0.5pt\bt{\phi_c}) \sgn(\bt{\phi_c}). \]
\end{lemma}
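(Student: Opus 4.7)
The plan is to apply Lemma~\ref{lemma:GLgarnir} directly in $[\mu+(1^r)]$, with the snake columns chosen according to the position of the inserted column $e$ relative to $j$ and $j+1$. The central observation is that the bijection $(i',j') \mapsto \altbox{i',j'}$ between $\{(i,0) : 1 \le i \le r\} \cup [\mu]$ and the boxes of $[\mu+(1^r)]$ carries $A = A_\mu(i,j)$ and $B = B_\mu(i,j+1)$ to sets $\bt{A}$ and $\bt{B}$ in $[\mu+(1^r)]$, and the induced isomorphism $\phi \mapsto \bt{\phi}$ carries $S_A \times S_B$ onto $S_{\bt{A}} \times S_{\bt{B}}$ while preserving signs. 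Consequently $\mathrm{id} = \bt{\phi_1}, \bt{\phi_2}, \ldots, \bt{\phi_\ell}$ form coset representatives of $S_{\bt{A}} \times S_{\bt{B}}$ in $S_{\bt{A} \cup \bt{B}}$.

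Next I identify $\bt{A}$ and $\bt{B}$ as snake-type subsets of $[\mu+(1^r)]$ in three cases: when $j+1 < e$, $\bt{A} = A_{\mu+(1^r)}(i,j)$ and $\bt{B} = B_{\mu+(1^r)}(i,j+1)$; when $j \ge e$, $\bt{A} = A_{\mu+(1^r)}(i,j+1)$ and $\bt{B} = B_{\mu+(1^r)}(i,j+2)$; and when $j+1 = e$, $\bt{A} = A_{\mu+(1^r)}(i,j)$ and $\bt{B} = B_{\mu+(1^r)}(i,j+2)$, with the new column $e = j+1$ interposed between them. A short computation with the conjugate partition, using that $\mu_i \ge \mu_{r+1} = e-1$ for $i \le r$ and $\mu_i \le \mu_{r+1} < e$ for $i > r$, shows that $(\mu+(1^r))'_{j_*} = \mu'_j$ and $(\mu+(1^r))'_{j'_*} = \mu'_{j+1}$, where $j_*$ and $j'_*$ denote the columns containing $\bt{A}$ and $\bt{B}$ respectively. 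Hence the size condition $|\bt{A} \cup \bt{B}| = (\mu+(1^r))'_{j_*} + 1$ and the constraint $i \le (\mu+(1^r))'_{j'_*}$ are inherited from the hypotheses of the original snake in $[\mu]$, so Lemma~\ref{lemma:GLgarnir} applies in $[\mu+(1^r)]$.

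Applying Lemma~\ref{lemma:GLgarnir} at columns $j_* < j'_*$ and reducing the resulting sum over $S_{\bt{A} \cup \bt{B}}$ to a sum over coset representatives via~\eqref{eq:column}, exactly as in the derivation of~\eqref{eq:snake}, then yields
\[ F(\bt{t}) = -\sum_{c=2}^\ell F(\bt{t}\hskip0.5pt\bt{\phi_c}) \sgn(\bt{\phi_c}), \]
since the non-identity coset representatives of $S_{\bt{A}} \times S_{\bt{B}}$ in $S_{\bt{A} \cup \bt{B}}$ are precisely $\bt{\phi_2}, \ldots, \bt{\phi_\ell}$ and signs are preserved under the bijection $\phi \mapsto \bt{\phi}$.

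The main obstacle is the case $j+1 = e$, where Lemma~\ref{lemma:GLgarnir} must be invoked between the non-adjacent columns $j$ and $j+2$ of $[\mu+(1^r)]$, with the inserted column $e$ sitting between them. One must check both that Lemma~\ref{lemma:GLgarnir} tolerates non-adjacent snake columns (its hypothesis is merely $j_* < j'_*$) and that the conjugate-partition calculations at columns $j_*$ and $j'_*$ are unaffected by the inserted column. Unlike the proof of Lemma~\ref{lemma:technical}, no extra boxes need be adjoined to $A$ or $B$, since inserting a column of length $r$ (rather than a row of length $r$ at the top) does not lengthen any pre-existing column of $[\mu]$.
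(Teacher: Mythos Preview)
Your proof is correct and follows the same approach as the paper: both observe that the claimed relation is an instance of the snake relation~\eqref{eq:snake} applied to $\bt{t}$ at the boxes $[i,j]$ and $[i,j+1]$ in $[\mu+(1^r)]$, with the possibility that these lie in non-adjacent columns when $j+1=e$. The paper's proof is a single sentence invoking~\eqref{eq:snake}, followed by a remark noting the non-adjacent-column case; your version supplies the detailed verification (the three cases and the conjugate-partition check) that the paper leaves implicit, and your closing observation about why no extra boxes are needed, in contrast to Lemma~\ref{lemma:technical}, is a useful clarification.
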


\begin{proof}
The claimed relation is an instance of~\eqref{eq:snake} for $\bt{t}$, with
respect to the boxes $[i,j]$ and $[i,j+1] \in [\mu] + (1^r)$. 
\end{proof}

\begin{remark}
If $\e=1$ the added column in $\bt{t}$ is at the far left with entries $1,\ldots, r$, and Lemma~\ref{lemma:technicalhat}
may be compared with Lemma~\ref{lemma:technical} in which we add a row at the top with entries all equal to $1$:
an intuitive statement of 
Lemmas~\ref{lemma:technical} and~\ref{lemma:technicalhat} is that these additions
preserve snake relations. 
If instead $e > 1$ and
$j=\e-1$ then $[i,j]$ and $[i,j+1]$ lie in the non-adjacent
columns $\e-1$ and $\e+1$ of $[\mu] + (1^r)$;
this is the only case where we need the freedom in~\eqref{eq:snake} to take $j' \not= j+1$.
\end{remark}

Recall from~\S\ref{sec:model} that $\B = \SSYT_{\{1,\ldots,d\}}(\mu)$.
Let $\CC$ be the set of column-standard $(\mu + (1^r))$-tableaux with entries from $\{1,\ldots, d\}$
and let $\BB = \SSYT_{\{1,\ldots,d\}}(\mu+(1^r))$. Thus $\BB \subseteq \CC$.
Both $\B$ and $\CC$ are ordered by the total order $<$ in Definition~\ref{defn:totalorder}.
Given $T \in \SSYT_\B(\nu)$, let $\bt{T}$ be the $\nu$-tableau
defined by replacing each $\mu$-tableau entry $s$ of $T$ with~$\bt{s}$.
For $s$, $t \in \B$ we have
$s < t$ if and only if $\bt{s} < \bt{t}$,
since the inserted column is the same in $\bt{s}$ and~$\bt{t}$.
Hence $\bt{T} \in \SSYT_\CC(\nu)$
and $F(\bt{T}) \in \nabla^\nu \bigl( \nabla^{\mu+(1^r)}(E) \bigr)$.
 By the case $N=1$ of the following definition, $\bt{T} \in \SSYT_{\BB}(\nu)$
if and only if $\bt{T}$ is $r$-saturated.

\begin{definition}
Recall that if $r \ge \ell(\mu)$ then  $\e=1$ and otherwise $\e = \mu_{r+1}+1$.
Let $N \in \N_0$ and
let $U$ be a $\nu$-tableau whose entries are certain $\mu + N(1^r)$-tableaux.
We say that $U$ is \emph{$r$-saturated} if 
whenever $u$ is a $\mu + N(1^r)$-tableau entry of $U$, we have
$u_{(i,j)} = i$
for $1 \le i \le r$ and $1 \le j \le e$.
\end{definition}

Equivalently, $U$ is $r$-saturated if the first $e$ columns of 
each $\mu + N(1^r)$-tableau entry of $U$ each begin
$1, \ldots, r$ when read from top to bottom. 
For example, when $r=2$ and $\mu=(4,2,1)$ we saw
in Example~\ref{ex:insertion} that $e=2$. 
Taking $N=1$, of the two $((4,2,1)+(1^2))$-tableaux $t^\star$ shown,
only the first could be an
entry of a $2$-saturated 
tableau, since for the second $t^\star_{(2,1)} = 3$.

Given $v \in \nabla^\nu \bigl( \nabla^\mu (E) \bigr)$ 
let $\bt{v} \in \nabla^\nu \bigl( \nabla^{\mu+(1^r)} \bigr)$ denote the image of 
$v$ under the $\C$-linear map defined on the canonical basis in Lemma~\ref{lemma:numubasis} of
$\nabla^\nu \bigl( \nabla^\mu (E) \bigr)$ by $F(T) \mapsto F(T^\star)$
for each $T \in \SSYT_\B(\nu)$.

\begin{example}\label{ex:Brion}
Let $r = 2$ and let $\mu = (2,1,1)$ so $e=2$.
%the labels for the boxes in $[(2,1,1)]$ and $[(2,1,1) + (1,1)]$ are
%\begin{center}
%\begin{tikzpicture}[x=0.75cm,y=-0.75cm]
%\draw (0,0)--(2,0); 
%\draw (0,1)--(2,1);
%\draw (0,2)--(1,2);
%\draw (0,3)--(1,3);
%\draw (0,0)--(0,3);
%\draw (1,0)--(1,3);
%\draw (2,0)--(2,1);
%\node at (0.5,0.5) {$\scriptstyle (1,1)$};
%\node at (1.5,0.5) {$\scriptstyle (1,2)$};
%\node at (0.5,1.5) {$\scriptstyle (2,1)$};
%\node at (0.5,2.5) {$\scriptstyle (3,1)$};
%\end{tikzpicture}\raisebox{26pt}{,}
%\qquad
%\begin{tikzpicture}[x=0.75cm,y=-0.75cm]
%\draw (0,0)--(3,0); 
%\draw (0,1)--(3,1);
%\draw (0,2)--(2,2);
%\draw (0,3)--(1,3);
%\draw (0,0)--(0,3);
%\draw (1,0)--(1,3);
%\draw (2,0)--(2,2);
%\draw (3,0)--(3,1);
%\node at (0.5,0.5) {$\scriptstyle [1,1]$};
%\node at (1.5,0.5) {$\scriptstyle [1,0]$};
%\node at (2.5,0.5) {$\scriptstyle [1,2]$};
%\node at (0.5,1.5) {$\scriptstyle [2,1]$};
%\node at (1.5,1.5) {$\scriptstyle [2,0]$};
%\node at (0.5,2.5) {$\scriptstyle [3,1]$};
%\end{tikzpicture}\raisebox{26pt}{.}
%\end{center}
Let $d = 4$, so $\B = \SSYT_{\{1,2,3,4\}}\bigl((2,1,1)\bigr)$. Take $\nu = (2)$.
A calculation, either in {\sc Magma} or by hand
using the domino tableau rule in \cite[Theorem~4.1]{CarreLeclerc},
gives $\langle s_{(2)} \circ s_{(2,1,1)}, s_{(3,2,2,1)} \rangle = 1$
and so the space of highest-weight vectors of weight $(3,2,2,1)$ 
in $\Sym^2 \bigl( \nabla^{(2,1,1)}(E) \bigr)$ is $1$-dimensional. Computing
the images of the $F(s)$ for $s \in \B$ under the generators $X^{(2)}$, $X^{(3)}$, $X^{(4)}$ of the
Borel subalgebra $\b$ of upper-triangular matrices in $\gl_4(\C)$ 
using Proposition~\ref{prop:nabla}(ii) one finds that if
$T_{(1)}$, $T_{(2)}$, $T_{(3)}$, $T_{(4)}$ are the four tableaux
in $\SSYT_\mathcal{B}\bigl( (2) \bigr)$ shown below
\[ \doubletableau{1.2cm}{0.425cm}{\young(11,2,3)}{\young(12,3,4)}\; \raisebox{0.85cm}{,}\ 
  \doubletableau{1.2cm}{0.425cm}{\young(11,2,3)}{\young(13,2,4)}\; \raisebox{0.85cm}{,}\  
  \doubletableau{1.2cm}{0.425cm}{\young(11,3,4)}{\young(12,2,3)}\; \raisebox{0.85cm}{,}\ 
  \doubletableau{1.2cm}{0.425cm}{\young(11,2,4)}{\young(13,2,3)} \]
then, by Lemma~\ref{lemma:hw},
\[ v = F(T_{(1)}) - F(T_{(2)}) - F(T_{(3)}) + F(T_{(4)}) \in \Sym^2 \bigl( \nabla^{(2,1,1)}(E) \bigr). \]
is a highest-weight vector of weight $(3,2,2,1)$.
For example, by Proposition~\ref{prop:nabla}(ii), we have $X^{(2)} \cdot F(T^{(2)}) = 0$, $X^{(2)} \cdot F(T^{(4)}) = 0$ and
\[ X^{(2)} \cdot F(T^{(1)}) = 
F\left(\, \young(11,2,3)\, \right) F\left(\, \young(11,3,4)\, \right)
%= F\left( \young(11,3,4) \right) F(\left( \young(11,2,3) \right)
%F\!\left(\, 
%\scalebox{0.9}{$\raisebox{-22pt}{\smash{\doubletableau{1.2cm}{0.425cm}{\young(11,2,3)}{\young(11,3,4)}}}$} \, \right) 
%= F\!\left(\, \scalebox{0.9}{$\raisebox{-22pt}{\smash{\doubletableau{1.2cm}{0.425cm}{\young(11,3,4)}{\young(11,2,3)}}}$}
%\, \right)
 = X^{(2)} \cdot F(T_{(3)}); \]
since $T_{(1)}$ and $T_{(3)}$ appear with opposite signs in $v$, this implies $X^{(2)} \cdot v = 0$. 
 %applying $X^{(2)}$ to $v$ kills $F(T^{(2)}$ and $F(T^{(4)})$ 
%and sends $F(T^{(1)})$ and $F(T^{(3)})$ to
%\[ \pm\, F\left(\; \young(11,2,3)\, \right) F\left(\, \young(11,3,4)\,  \right) \]
%with opposite signs. 
Essentially the same calculation shows that
\[ \bt{v} = F(\bt{T_{(1)}}) - F(\bt{T_{(2)}}) - F(\bt{T_{(3)}}) + F(\bt{T_{(4)}}) \in 
\Sym^2 \bigl( \nabla^{(2,1,1)+(1,1)}(E) \bigr)\]
is a highest-weight vector of weight $(3,2,2,1) + 2(1,1)$. 
The tableaux $\bt{T_{(1)}}$ and $\bt{T_{(3)}}$ are not $2$-saturated;
they lie in $\SSYT_{\CC}\bigl( (2) \bigr) $ but not in $\SSYT_{\BB}\bigl( (2) \bigr)$. For example
\begin{align*}
F( \bt{T_{(1)}} ) &= F\left(\; \young(111,22,3) \, \right) F\left(\; \young(112,32,4) \, \right) \\
&= F\left(\; \young(111,22,3) \, \right) F\left(\; \young(112,23,4) \, \right) -
 F\left(\; \young(111,22,3) \, \right) F\left(\; \young(112,24,3) \, \right) \end{align*}
expressed in the canonical basis of $\Sym^2 \bigl( \nabla^{(3,2,1)} (E) \bigr)$. We leave it as an exercise
to show that if $U_{(1)}$, $U_{(2)}$, $U_{(3)}$, $U_{(4)}$ are the four tableaux
in $\SSYT_\CC\bigl( (2) \bigr)$ shown below
\[ \scalebox{0.85}{$\displaystyle 
  \doubletableau{1.7cm}{0.425cm}{\young(111,23,3)}{\young(112,22,4)}\, \raisebox{0.85cm}{,}\ 
  \doubletableau{1.7cm}{0.425cm}{\young(111,22,4)}{\young(112,23,3)}\, \raisebox{0.85cm}{,}\  
  \doubletableau{1.7cm}{0.425cm}{\young(111,24,3)}{\young(112,22,3)}\, \raisebox{0.85cm}{,}\ 
  \doubletableau{1.7cm}{0.425cm}{\young(111,22,3)}{\young(112,24,3)}$} \]
then 
\[ w = F(U_{(1)}) - F(U_{(2)}) - F(U_{(3)}) + F(U_{(4)}) - F(T_{(2)}^\star) + 
F(T_{(4)}^\star)\]
is a highest-weight vector in $\Sym^2 \bigl( \nabla^{(2,1,1)+(1,1)}(E) \bigr)$
of weight $(3,2,2,1) + 2(1,1)$, linearly independent of $\bt{v}$. 
By Theorem~\ref{thm:Brion},
the multiplicity \[ [\Sym^2 \bigl( \nabla^{(2,1,1)+N(1,1)}(E) \bigr) : \nabla^{(3,2,2,1) + N(2,2)}] \] 
is constant
for $N \ge 1$. 
A further domino tableau calculation 
shows that $\langle s_{(2)} \circ s_{(3,2,1)}, s_{(5,4,2,1)} \rangle = 2$.
Therefore, repeating the column addition once more,
we obtain vectors \smash{$v^{\star\star}$} and \smash{$w^\star$} spanning the subspace
of $\Sym^2 \bigl( \nabla^{(2,1,1)+2(1,1)}(E) \bigr)$ of highest-weight vectors of weight $(3,2,2,1) + 2(2,2)$;
further additions give a spanning set for the subspace 
of  $\Sym^2 \bigl( \nabla^{(2,1,1)+N(1,1)}(E) \bigr)$ of highest-weight vectors of weight $(3,2,2,1) + N(2,2)$
for each $N \ge 2$.
\end{example}
 
%Given a totally ordered set $\B$ and a partition $\lambda$,
%let $\RSSYT_\B(\lambda)$ denote the set of row-semistandard
%$\lambda$-tableaux with entries from $\B$.  
 
 %(Note that while $\bt{S}$ is semistandard
%with respect to the total order $<$ on its $\mu$-tableaux entries, these entries
%are not themselves semistandard, in general.)

%By the remark after Definition~\ref{defn:polytabloids}, the $\GL$-tabloids
%$f(T)$ for $T \in \RSYT_\CC(\nu)$ are linearly independent.
%NO: since the \mu-tableau entries are column-standard, not standard

%{\bf Definition of $\bt{S}$. Note that the $\mu$-tableau $\bt{s}$ in $\bt{S}$
%need not be standard, so, unlike Lemma~\ref{lemma:tildeiso}, does not send basis elements
%to basis elements. Need hybrid order on $n$-tuples $(t_1,\ldots,t_n)$ of tableaux:
%$(t_1,\ldots,t_n) \rhd (u_1,\ldots,u_n)$ if and only if $\overline{t_1} = \overline{u_1}$,
%\ldots, $\overline{t_{e-1}} = \overline{u_{e-1}}$ and $\overline{t_e} \rhd \overline{u_e}$.
%Issues with pre-order, since comparing tableaux after row=straightening?}

\begin{lemma}\label{lemma:injective}
The map $v \mapsto \bt{v}$ %$F(T) \mapsto F(\bt{T})$ 
defines a $\C$-linear injection
\[ \nablanumuE_\lambda \rightarrow \nablanumuEh_{\lambda+n(1^r)}. \]
\end{lemma}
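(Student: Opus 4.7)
The well-definedness on weight spaces is a direct content calculation: each $\mu$-tableau entry of $T^\star$ contains $r$ more entries than the corresponding entry of $T$ (namely, the values $1, 2, \ldots, r$), and $T$ has $n = |\nu|$ entries in all, so $\wt(T^\star) = \lambda + n(1^r)$. Hence $F(T) \mapsto F(T^\star)$ carries the $\lambda$-weight space into the $(\lambda + n(1^r))$-weight space.

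For injectivity, the plan is to introduce a parallel construction. For each $t \in \SSYT_\B(\mu)$, let $t^\wedge$ denote the $(\mu + (1^r))$-tableau obtained from $t$ by prepending a column with entries $1, 2, \ldots, r$. Since the columns of $t$ are strictly increasing from the top (so $t_{(i,j)} \geq i$), row $i \le r$ of $t^\wedge$ remains weakly increasing after inserting $i$ at the front, and the first column is strictly increasing because $t_{(r+1,1)} \geq r+1$; thus $t^\wedge \in \SSYT_\BB(\mu+(1^r))$. The key observation is that row-sorting $t^\star$ yields $t^\wedge$: in each row $i \le r$, the inserted entry $i$ is minimal (all other entries being $\geq i$), so it sorts to the front. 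That is, $\overline{t^\star} = t^\wedge$. Because the prepended column is identical for all $t$, the correspondence $t \mapsto t^\wedge$ preserves the total order $<$ of Definition~\ref{defn:totalorder}, so $T \mapsto T^\wedge$ (replacing each entry $t$ of $T$ by $t^\wedge$) sends $\SSYT_\B(\nu)$ injectively into $\SSYT_\BB(\nu)$.

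The main step is to analyse the expansion of $F(T^\star)$ in the canonical basis of $\nabla^\nu\bigl(\nabla^{\mu+(1^r)}(E)\bigr)$. By Proposition~\ref{prop:boundGL} applied to each entry $t^\star$ of $T^\star$,
\[ F(t^\star) = F(t^\wedge) + w_t, \]
where $w_t$ is a $\Z$-linear combination of $F(u)$ for $u \in \SSYT_\BB(\mu+(1^r))$ with $t^\wedge \rhd u$. Substituting this into the multilinear expression $F(T^\star) = \sum_{\tau \in \CPP(\nu)} \sgn(\tau) \bigotimes_I \prod_J F\bigl(T^\star_{(I,J)\tau^{-1}}\bigr)$ and collecting, the contribution where every factor provides its $F(t^\wedge)$ term assembles into exactly $F(T^\wedge)$, while each remaining contribution is a $\GL$-polytabloid $F(U')$ for a $\nu$-tableau $U'$ with entries in $\BB$, at least one of which is strictly dominated by the corresponding entry of $T^\wedge$. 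Straightening each such $F(U')$ in the canonical basis via Corollary~\ref{cor:snake}, together with a $\nu$-level analog of Lemma~\ref{lemma:dom}, produces only terms $F(U)$ for $U \in \SSYT_\BB(\nu)$ strictly below $T^\wedge$ in the appropriate dominance order inherited from $\SSYT_\BB(\mu+(1^r))$. Hence
\[ F(T^\star) = F(T^\wedge) + \sum_{U \lhd T^\wedge} c_U F(U). \]

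With this triangular structure, injectivity follows by a standard maximum argument: if $\sum_T \alpha_T F(T^\star) = 0$ were a non-trivial relation in the $(\lambda + n(1^r))$-weight space, then choosing $T_0$ with $\alpha_{T_0} \neq 0$ and $T_0^\wedge$ maximal in the dominance order among $\{T^\wedge : \alpha_T \neq 0\}$, the coefficient of $F(T_0^\wedge)$ in the sum is $\alpha_{T_0}$, which forces $\alpha_{T_0} = 0$, a contradiction. The principal technical obstacle is verifying the triangular structure: one must check that straightening the $\GL$-polytabloids arising from the ``lower'' contributions respects the entry-wise dominance inherited from $\SSYT_\BB(\mu+(1^r))$, which amounts to a careful lifting of Lemma~\ref{lemma:dom} to $\nu$-tableaux with tableau-valued entries.
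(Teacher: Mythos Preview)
Your overall strategy---showing that $F(T^\star)$ expands as $F(T^\wedge)$ plus ``lower'' terms, then running a maximality argument---mirrors the paper's, but one of your supporting claims is false. You assert that $t \mapsto t^\wedge = \overline{t^\star}$ preserves the total order $<$ of Definition~\ref{defn:totalorder}, reasoning that ``the prepended column is identical for all $t$''. This description of $t^\wedge$ is only accurate when $r \ge \ell(\mu)$ (equivalently $e=1$). When $r < \ell(\mu)$, rows $r+1, \ldots, \ell(\mu)$ of $t^\wedge$ receive no new entry, so column $j \ge 2$ of $t^\wedge$ mixes column $j-1$ of $t$ (in rows $\le r$) with column $j$ of $t$ (in rows $>r$), and the order need not be preserved. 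Concretely, take $\mu=(1,1,1)$, $r=2$ (so $e=2$), and
\[
s = \young(2,3,4)\,,\qquad t = \young(1,3,5)\,.
\]
Here $s<t$ since the greatest entry in the symmetric difference of their single columns is $5 \in t$. But $s^\wedge = \young(12,23,4)$ and $t^\wedge = \young(11,23,5)$, and comparing their rightmost columns $(2,3)$ versus $(1,3)$ gives $t^\wedge < s^\wedge$. So $T^\wedge$ need not lie in $\SSYT_{\B^+}(\nu)$, and your triangular expansion in the canonical basis of $\nabla^\nu\bigl(\nabla^{\mu+(1^r)}(E)\bigr)$ is not well-posed.

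The paper avoids this by working one level down, with $\GL$-tabloids $f(\,\cdot\,)$ in $\Sym^\nu\bigl(\nabla^{\mu+(1^r)}(E)\bigr)$ rather than $\GL$-polytabloids $F(\,\cdot\,)$ in $\nabla^\nu\bigl(\nabla^{\mu+(1^r)}(E)\bigr)$. Since $f(U)$ depends only on the row multisets of $U$, whether $S(T^\star)$ (your $T^\wedge$) is semistandard as a $\nu$-tableau is irrelevant: one only needs that $T \mapsto S(T^\star)$ is injective on row multisets (established by an explicit inverse) and that the coefficient of $f\bigl(S(T^\star)\bigr)$ in $F(T^\star)$ equals $1$, which follows from Proposition~\ref{prop:boundGL} and Lemma~\ref{lemma:dom} entrywise. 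Your triangularity claim at the $\nabla^\nu$ level would require the ``careful lifting of Lemma~\ref{lemma:dom}'' you flag as the principal obstacle, and the counterexample above shows it cannot go through via order-preservation of $t \mapsto t^\wedge$.
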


\begin{proof}
Let $T \in \SSYT_\B(\nu)$. 
Since  $\wt(\bt{T}) = \wt(T) + n(1^r)$, Proposition~\ref{prop:nablaT}(i) 
implies that $F(\bt{T}) \in \nablanumuEh_{\lambda+n(1^r)}$. 
By definition
\begin{equation}\label{eq:FhatT} F(\bt{T}) = \sum_{\tau \in \CPP(\nu)} \bigotimes_{I=1}^{\ell(\nu)}
\prod_{J=1}^{\nu_I} F(\bt{T}_{\pairpq\tau^{-1}}) \sgn(\tau).
%\in %wrong \SSYT_\CC(\nu)
%\nablanumuEh. %corrected but already said
\end{equation}
The row-standardization $\overline{t}$ of a tableau $t$ was defined in Definition~\ref{defn:rowstraightening}.
Applying this operation to each entry of $\bt{T}$, we 
define $S(\bt{T}) \in \SSYT_{\B^+}(\nu)$ by 
$S(\bt{T})_{\pairpq} = \overline{\bt{T}_{\pairpq}}$.  
By Proposition~\ref{prop:boundGL}, 
\[ F\bigl( \bt{T}_{\pairpq\tau^{-1}} \bigr) = F\bigl( S(\bt{T})_{\pairpq\tau^{-1}} \bigr) + u_{\pairpq\tau^{-1}} \]
where $u_{\pairpq\tau^{-1}} \in \nabla^{\mu +(1^r)}(E)$ is a linear combination 
of $\GL$-polytabloids $F(s)$ for $s \in \B^+$
such that $\overline{\bt{T}_{\pairpq\tau^{-1}}} \rhd s$. 
Define $V \in \nabla^\nu\bigl(\nabla^{\mu+(1^r)}(E)\bigr)$
%expressed in the
%basis~\eqref{eq:numubasis} of $\nabla^\nu\bigl(\nabla^{\mu+(1^r)}(E)\bigr)$,
%the right-hand side of~\eqref{eq:FhatT} is
by 
\[ F(\bt{T}) = F\bigl( S(\bt{T}) \bigr) + W;\] 
by the previous sentence,
 the vector $W$ is a linear combination of basis elements of $\Sym^\nu \bigl( \nabla^{\mu + (1^r)}(E) \bigr)$
 each of the form %of elements 
 \[ \bigotimes_{I=1}^{\ell(\nu)} \prod_{J=1}^{\nu_I} F(u_{\pairpq}), \]
 %of $\Sym^\nu \bigl( \nabla^{\mu + (1^r)}(E) \bigr)$,
 where the tableaux $u_{\pairpq} \in \B^+$ can be relabelled by a permutation $\tau$
so that $S(\bt{T})_{\pairpq} \unrhd u_{\pairpq\tau}$ for each $\pairpq \in [\nu]$, with at
least one of these dominance relations strict. It follows that
the coefficient of $f\bigl( S(\bt{T}) \bigr)$ in $F(\bt{T})$ comes entirely from $F\bigl( S(\bt{T}) \bigr)$. 
By Lemma~\ref{lemma:dom}, this coefficient is $1$.

Each $\mu+(1^r)$-tableau entry of $S(\bt{T})$ is of the form $\overline{\hskip0.5pt\bt{t}\,}$ where $t \in \B$.
Given $s = \overline{\hskip0.5pt\bt{t}\,}$ one may reconstruct $t$  as follows:
choose, for each $i \in \{1,\ldots, r\}$, a box $(i,j_i)$ containing
$i$ in row $i$ of $s$; now erase the entry in this box, and move each
entry to the right of the now empty box one place to the left; finally delete
the box at the end of row $i$. More formally,
\[ t_{(i,h)} = \begin{cases} s_{(i,h+1)} & \text{if $i \in \{1,\ldots, r\}$ and $h \ge j_i$} \\
s_{(i,h)} & \text{otherwise.}\end{cases} \]
Therefore the map $T \mapsto S(\bt{T})$ is injective.

Let
\[ v = \sum_{T \in \SSYT_\B(\nu)} \alpha_T F(\bt{T})\] 
where not every coefficient is zero.
Choose $T$ so that $S(\bt{T})$ is a maximal element of $\{S(\bt{T}) : \alpha_T \not= 0\}$
in the dominance order. By the previous two paragraphs, the coefficient of $f\bigl(S(\bt{T}))$ 
in $v$ is $\alpha_T$. Hence the map $F(T) \mapsto F(\bt{T})$ is injective.
\end{proof}

%Let $\bt{v} \in \nablanumuEh_{\lambda+n(1^r)}$ 
%denote the image of $v \in \nablanumuE_\lambda$ under the map in the previous lemma.

\begin{proposition}\label{prop:Brioninjection}
The map $v \mapsto \bt{v}$ restricts to an injective $\C$-linear map from the highest-weight 
vectors in $\nablanumuE$ of weight $\lambda$ to the
highest-weight
vectors in $\nablanumuEh$ of weight $\lambda + (n^r)$.
Moreover if every $U \in \SSYT_\BB(\nu)$ of weight $\lambda + (n^r)$ 
is $r$-saturated %of the form $\bt{T}$ for some $T \in \SSYT_\B(\nu)$ 
then the map is bijective.
\end{proposition}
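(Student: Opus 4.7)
The plan is to adapt the argument of Proposition~\ref{prop:BCV-Nbijection}, with the inserted column $e$ of $\bt{s}$ now playing the role previously played by the new top row of $\widetilde{s}$. First, I would extend the basis-level assignment $F(T) \mapsto F(\bt{T})$ linearly to polytabloids labelled by column-standard $\nu$-tableaux whose $\mu$-tableau entries are column-standard (but possibly non-semistandard); such polytabloids arise as intermediate terms when $X^{(c)}$ acts. Two facts make this extension compatible with subsequent straightening to the canonical basis of $\nabla^\nu(\nabla^{\mu + (1^r)}(E))$: Lemma~\ref{lemma:technicalhat}, which transports each snake relation within a $\mu$-tableau entry to a snake relation within the corresponding $\mu + (1^r)$-tableau entry (using the full generality of~\eqref{eq:snake} when $j = e-1$, where the relevant columns of $\mu + (1^r)$ are non-adjacent); and the observation that $s \mapsto \bt{s}$ preserves the total order of Definition~\ref{defn:totalorder}, because column $e$ is fixed in every image and thus does not affect the rightmost differing column, so the $\nu$-layer column-standardization also transports.

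The next step is the intertwining identity
\[
X^{(c)} \cdot \bt{v} \;=\; \bt{\bigl( X^{(c)} \cdot v \bigr)} \qquad \text{for all } v \in \nablanumuE \text{ and } c \in \{2,\ldots, d\}.
\]
By Proposition~\ref{prop:nablaT}(ii), $X^{(c)} \cdot F(\bt{S})$ is the sum of $F(U)$ over all $U$ obtained from $\bt{S}$ by changing a single entry $c$ to $c-1$ in a single $\mu + (1^r)$-tableau entry. Changes inside the inserted column $e$ (possible only when $c \le r$) create a repeated $c-1$ in column $e$ and so vanish by~\eqref{eq:column}; the surviving changes lie in the $\mu$-portion of the entries and correspond bijectively via $S' \leftrightarrow \bt{S'}$ to the terms of $X^{(c)} \cdot F(S)$, giving the identity after applying the extended map termwise. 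Combined with the weight shift $\wt(\bt{S}) = \wt(S) + n(1^r)$ and the injectivity from Lemma~\ref{lemma:injective}, this shows that $\bt{v}$ is a highest-weight vector if and only if $v$ is, which yields the first half of the proposition.

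For the bijectivity under the $r$-saturation hypothesis, the argument is dimensional. The hypothesis forces every $U \in \SSYT_{\BB}(\nu)$ of weight $\lambda + (n^r)$ to have each of its $\mu + (1^r)$-tableau entries $r$-saturated, hence of the form $\bt{t}$ for a unique semistandard $\mu$-tableau $t$; stripping column $e$ from each entry of $U$ therefore produces a unique $T \in \SSYT_\B(\nu)$ of weight $\lambda$, with semistandardness of the stripped entries automatic from $u_{(i,j)} = i$ for $j \le e$. Hence
$\dim \nablanumuEh_{\lambda + (n^r)} \le \dim \nablanumuE_\lambda$,
while Lemma~\ref{lemma:injective} supplies the reverse inequality through the linear independence of the vectors $F(\bt{T})$ for $T \in \SSYT_\B(\nu)$ of weight $\lambda$. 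Equality forces the map $v \mapsto \bt{v}$ to be a bijection between the full weight spaces, so every highest-weight vector $w \in \nablanumuEh$ of weight $\lambda + (n^r)$ has the form $\bt{v}$, and the intertwining identity forces such a $v$ to be a highest-weight vector. The main obstacle will be the careful verification of the intertwining identity, which requires handling two levels of straightening (within individual $\mu$-tableau entries, and across entries along a $\nu$-layer column) and checking that each transports faithfully under $s \mapsto \bt{s}$; Lemma~\ref{lemma:technicalhat} and the order-preservation of $s \mapsto \bt{s}$ are exactly the needed tools.
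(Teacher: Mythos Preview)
Your proposal is correct and follows essentially the same approach as the paper. The paper's proof is terse---it simply says the first part ``follows by combining Proposition~\ref{prop:nablaT}(ii), Lemma~\ref{lemma:technicalhat} and Lemma~\ref{lemma:injective}, in the same way as Proposition~\ref{prop:BCV-Nbijection}''---while you spell out the mechanism via the intertwining identity $X^{(c)} \cdot \bt{v} = \bt{(X^{(c)} \cdot v)}$, which is a clean way to package exactly what that reference is doing; your treatment of the bijectivity under $r$-saturation likewise matches the paper's weight-space dimension count.
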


\begin{proof}
The first part follows by combining Proposition~\ref{prop:nablaT}(ii), Lemma~\ref{lemma:technicalhat}
and Lemma~\ref{lemma:injective},
in the same way as Proposition~\ref{prop:BCV-Nbijection}. 
If the final hypothesis holds then every $U \in \SSYT_\BB(\nu)$ is of the form
$\bt{T}$ for some $T \in \SSYT_\B(\nu)$ and so, 
by Lemma~\ref{lemma:numubasis} and Proposition~\ref{prop:nablaT}(i), the map $F(T) \mapsto F(T^\star)$
defines a linear isomorphism $\nabla^\nu\bigl(\nabla^\mu(E) \bigr)_{\lambda} \rightarrow
\nabla^\nu \bigl( \nabla^{\mu+(1^r)}(E) \bigr)_{\lambda + (n^r)}$. 
%
%every vector
%in $\nabla^{\nu}\bigl(\nabla^{\mu+(1^r)}(E) \bigr)$ of weight $\lambda + n(1^r)$ can
%be expressed as a linear combination of suitable $F(\bt{T})$ for $T \in \SSYT_\B(\nu)$.
Therefore in this case the restricted map is bijective on highest-weight vectors.
\end{proof}

We need the following sufficient condition for $r$-saturation. % condition.

\begin{lemma}\label{lemma:saturated}
Let $\BBN{M} = \SSYT_{\{1,\ldots, d\}}\bigl( \mu + M(1^r) \bigr)$.
Every element of $\SSYT_\BBN{M}(\nu)$ of weight $\lambda$ is $r$-saturated if
\[ M > n(\mu_1 + \cdots + \mu_{r-1}) + (n-1)\mu_r + \mu_{r+1} -
(\lambda_1 + \cdots + \lambda_r).\] 
\end{lemma}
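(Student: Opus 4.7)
The plan is to prove the contrapositive: assume $U \in \SSYT_\BBN{M}(\nu)$ is not $r$-saturated (with the weight of $U$ given, as arises in the Brion iteration, by $\lambda + M(n^r)$) and derive the bound $M \le n(\mu_1 + \cdots + \mu_{r-1}) + (n-1)\mu_r + \mu_{r+1} - (\lambda_1 + \cdots + \lambda_r)$. The starting observation is that the total number of entries of $U$ equal to some $i \in \{1,\ldots,r\}$ is $(\lambda_1 + \cdots + \lambda_r) + nrM$.

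I would then obtain an extremal upper bound on the number $N(u)$ of entries $\le r$ in any single semistandard $\mu + M(1^r)$-tableau entry $u$ of $U$. Column strict-increase forces $u_{(i,j)} \ge i$, confining all entries $\le r$ to rows $1, \ldots, r$ and giving the unconditional bound $N(u) \le (\mu_1 + \cdots + \mu_r) + rM$. If $u$ is not $r$-saturated, there exist $(i_0, j_0)$ with $i_0 \le r$, $j_0 \le e$, and $u_{(i_0, j_0)} > i_0$. Row weak-monotonicity propagates this to $u_{(i_0, j)} > i_0$ for all $j \ge j_0$, and then column strict-monotonicity delivers $u_{(r, j)} \ge r + 1$ in every column of length $\ge r$ with $j \ge j_0$. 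Since the columns of $\mu + M(1^r)$ of length $\ge r$ are precisely columns $1, \ldots, \mu_r + M$, the count of relevant columns is at least $\mu_r + M - \mu_{r+1}$, using $j_0 \le e = \mu_{r+1} + 1$; each such column forfeits the position $(r, j)$ from its tally of entries $\le r$, so in the bad case $N(u) \le (\mu_1 + \cdots + \mu_{r-1}) + (r-1)M + \mu_{r+1}$.

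Summing the refined bound for the bad entry with the generic bound for the remaining $n - 1$ entries and equating to $(\lambda_1 + \cdots + \lambda_r) + nrM$ yields
\[ (\lambda_1 + \cdots + \lambda_r) + nrM \le n(\mu_1 + \cdots + \mu_{r-1}) + (n-1)\mu_r + \mu_{r+1} + (nr - 1)M. \]
Since the $M$-coefficient on the right is exactly one less than that on the left, cancellation collapses to $(\lambda_1 + \cdots + \lambda_r) + M \le n(\mu_1 + \cdots + \mu_{r-1}) + (n-1)\mu_r + \mu_{r+1}$, contradicting the hypothesis on $M$ and so proving $U$ is $r$-saturated. The main obstacle is the refined bound in the middle paragraph: one must verify carefully that column strict-increase really forbids the position $(r, j)$ from holding an entry $\le r$ in every column $j \ge j_0$ of length $\ge r$, rather than merely rearranging entries among rows $1, \ldots, r$; the edge case $r \ge \ell(\mu)$ (so $\mu_{r+1} = 0$ and $e = 1$) needs a quick sanity check but the same formulae hold under the convention $\mu_i = 0$ for $i > \ell(\mu)$.
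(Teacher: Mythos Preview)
Your proof is correct and follows essentially the same counting argument as the paper: both prove the contrapositive by bounding the number of entries at most $r$ in a non-saturated tableau entry by $\mu_1 + \cdots + \mu_{r-1} + (r-1)M + \mu_{r+1}$ and summing against the generic bound for the other $n-1$ entries. The paper streamlines your middle paragraph by observing directly that non-saturation forces $t_{(r,e)} > r$ (via the same column-then-row propagation you describe), whence row $r$ contributes at most $e-1 = \mu_{r+1}$ entries in $\{1,\ldots,r\}$; you also correctly flag that the weight must be read as $\lambda + M(n^r)$, which is indeed how the paper's own proof uses it.
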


\begin{proof}
Let $u$ be a $\mu + M(1^r)$-tableau entry of $U \in \SSYT_\BBN{M}(\nu)$. The entries of~$u$ in $\{1,\ldots, r\}$ lie
in its first $r$ rows. Therefore $u$ has at most $\mu_1 + \cdots + \mu_r + Mr$ such entries.
If $U$ is not saturated then it has a $\mu + M(1^r)$-tableau entry $t$ such that $t_{(r,e)} > r$. This $t$
has at most $\mu_1 + \cdots + \mu_{r-1} + M(r-1) + (e-1)$ entries in $\{1,\ldots, r\}$. Since $e = \mu_{r+1}+1$,
this shows that $U$ has at most $(n-1)(\mu_1 + \cdots + \mu_r + Mr) + (\mu_1 + \cdots + \mu_{r-1} + M(r-1) + \mu_{r+1})$
entries in $\{1,\ldots, r\}$. The number of such entries is $\lambda_1 + \cdots + \lambda_r + Mnr$. Therefore
\[ n(\mu_1 + \cdots + \mu_{r-1} + Mr) + (n-1)\mu_r + \mu_{r+1} - M \ge \lambda_1 + \cdots + \lambda_r + Mnr.\]
Rearranging, this implies the lemma.
\end{proof}

\begin{proof}[Proof of Theorem~\ref{thm:Brion}]
By Proposition~\ref{prop:Brioninjection}
and Proposition~\ref{prop:hw} we have
\[ [\nabla^\nu \bigl( \nabla^\mu (E) \bigr) : \nabla^\lambda (E)] \le [\nabla^\nu \bigl( \nabla^{\mu + (1^r)} (E) \bigr) :
\nabla^{\lambda + n(1^r)}(E)].\] 
The first part of
Theorem~\ref{thm:Brion} now follows from~\eqref{eq:schurnabla}.
Now suppose that $N \in \N_0$ and $N \ge n(\mu_1 + \cdots + \mu_{r-1}) + (n-1)\mu_r + \mu_{r+1} -
(\lambda_1 + \cdots + \lambda_r)$. 
By Lemma~\ref{lemma:saturated}, taking $M = N+1$, every element of $\SSYT_{\BBN{(N+1)}}(\nu)$ is $r$-saturated.
Therefore, by Proposition~\ref{prop:Brioninjection}, the map $v \mapsto v^\star$ from
highest-weight vectors in $\nabla^\nu \bigl( \nabla^{\mu + N(1^r)} (E) \bigr)$ to
highest-weight vectors in $\nabla^\nu \bigl( \nabla^{\mu + (N+1)(1^r)} (E) \bigr)$ is a bijection.
The stability result now follows from Proposition~\ref{prop:hw}.
\end{proof}

\begin{example}\label{ex:stability}
Example~\ref{ex:Brion} shows that the stability bound in Theorem~\ref{thm:Brion} may be sharp.
We give an example of the opposite case.
Fix $n \in \N$. It is known (see for example \cite[\S 8.5]{PagetWildonTwisted})
that $\bigwedge^n ( \Sym^2\! E )$ is multiplicity-free. Moreover, the
partitions~$\lambda$ such that $[\bigwedge^n (\Sym^2\! E) : \nabla^\lambda(E)] = 1$ are all 
incomparable under the dominance order, and correspond, by Theorem~\ref{thm:maxls}, to the maximal weights
of the plethystic semistandard tableaux of shape $(2)^{(1^n)}$.
For example, $\bigwedge^3 (\Sym^2 E) = \nabla^{(4,1,1)}(E)
\,\oplus \, \nabla^{(3,3)}(E)$, corresponding to the plethystic tableau whose single column has $(2)$-tableau entries
\[ \Bigl\{ \, \young(11)\, , \young(12)\, , \young(13)\, \Bigr\}, \
\Bigl\{ \, \young(11)\, , \young(12)\, , \young(22)\, \Bigr\}, \]
respectively. More generally, for each $\rr \in \N$, provided that $\dim E \ge \rr$,
$\bigwedge^{\binom{\rr+1}{2}} (\Sym^2\! E)$ has $\nabla^{((\rr+1)^\rr)}(E)$ as an irreducible constituent, 
corresponding to the plethystic semistandard tableau of shape $(2)^{(1^n)}$ 
where $n = \binom{\rr+1}{2}$, 
defined using all $2$-multisubsets of $\{1,\ldots, \rr\}$.

Let $\lambda$ be a partition of $2n$ such that $[\bigwedge^n (\Sym^2\! E) : \nabla^\lambda(E)] = 1$. 
Let 
\[ \bigl\{ \raisebox{-5.5pt}{\thindoubletableau{0.45cm}{0.12cm}{$ a_1$}{$ b_1$}}\, ,
\ldots, \raisebox{-5.5pt}{\thindoubletableau{0.45cm}{0.12cm}{$a_n$}{$ b_n$}} \bigr\} \]
be the entries in the corresponding plethystic semistandard tableau of shape $(2)^{(1^n)}$. Let $N \in \N_0$.
Then the unique plethystic semistandard tableau of shape $(2+N)^{(1^n)}$ and weight $\lambda + (nN)$ 
has $(2+N)$-tableau entries $\{ u_1, \ldots, u_n \}$ where for each~$i$,
\[ u_i = \raisebox{-6pt}{\begin{tikzpicture}[x=0.5cm,y=-0.5cm,line width=0.65pt]\draw(0,0)--(6,0)--(6,1)--(1,1)--(0,1)--(0,0);\draw(1,0)--(1,1);\draw(3,0)--(3,1);\draw(4,0)--(4,1);\draw(5,0)--(5,1);\node at (0.5,0.5) {$1$};\node at (2,0.5) {$\ldots$};\node at (3.5,0.5){$1$};\node at (4.5,0.5) {$a_i$};\node at (5.5,0.5) {$b_i$};\end{tikzpicture}}\, . \]
Hence, by Theorem~\ref{thm:maxls}, $[\bigwedge^n (\Sym^{2+N}E ) : \nabla^{\lambda + (nN)}(E)] = 1$
for all $N \in \N_0$. This stability follows from Theorem~\ref{thm:Brion} for $N \ge 2(n-1) - \lambda_1$.
In the case $[\bigwedge^{\binom{\rr+1}{2}} \Sym^{2+N} (E) : \nabla^{((\rr+1)^\rr)+ (nN)}(E)] = 1$
this bound becomes $N \ge 2(\binom{\rr+1}{2} - 1) - (\rr+1) = \rr^2 - 3$; clearly this can be arbitrarily large.
\end{example}

We end with a combinatorial upper bound for the stable multiplicity. Example~\ref{ex:stability}
shows that the bound is sharp in infinitely many cases.

\begin{proposition}\label{prop:stable}
Let $L$ be the greater 
of $n(\mu_1 + \cdots + \mu_{r-1}) + (n-1)\mu_r + \mu_{r+1} - (\lambda_1 + \cdots + \lambda_r)$
and $0$. Then
 \[ [\nabla^\nu \bigl( \nabla^{\mu+N(1^r)} (E) \bigr) : \nabla^{\lambda + N(n^r)}(E)] \le 
  \bigl| \{ T \in \SSYT_{\BBN{L}}(\nu) : \wt(T) = \lambda + L(n^r) \} \bigr| \]
for all $N \in \N_0$ with $N \ge L$.
\end{proposition}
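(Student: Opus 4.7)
The plan is to combine the stability half of Theorem~\ref{thm:Brion} with the elementary fact that, for any polynomial $\GL(E)$-module $V$ and any partition $\kappa$, the multiplicity $[V:\nabla^\kappa(E)]$ is bounded above by the dimension of the $\kappa$-weight space $V_\kappa$, and then identify that weight-space dimension with the claimed combinatorial count using the model of Section~\ref{sec:model}.

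First I would note that $L \ge n(\mu_1+\cdots+\mu_{r-1}) + (n-1)\mu_r + \mu_{r+1} - (\lambda_1+\cdots+\lambda_r)$ by construction, so the stability assertion in Theorem~\ref{thm:Brion} applies at $N=L$. Hence for every $N \ge L$,
\[ [\nabla^\nu(\nabla^{\mu+N(1^r)}(E)) : \nabla^{\lambda+N(n^r)}(E)] = [\nabla^\nu(\nabla^{\mu+L(1^r)}(E)) : \nabla^{\lambda+L(n^r)}(E)], \]
reducing the proposition to bounding the right-hand side.

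Next I would establish the general inequality $[V:\nabla^\kappa(E)] \le \dim V_\kappa$ for a polynomial $\GL(E)$-module $V$ of degree $|\kappa|$. By Proposition~\ref{prop:hw}(iii), $V$ decomposes as a direct sum of irreducible summands $\nabla^\pi(E)$; each such summand contributes $\dim \nabla^\pi(E)_\kappa \ge 0$ to $\dim V_\kappa$, and by Lemma~\ref{lemma:nabla}(i) together with Proposition~\ref{prop:nabla}(iv) applied to the highest-weight vector $F(t^\kappa)$, each summand isomorphic to $\nabla^\kappa(E)$ contributes at least one (in fact exactly one, by the Schur-function expansion~\eqref{eq:Schur}).

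Applying this with $V = \nabla^\nu(\nabla^{\mu+L(1^r)}(E))$ and $\kappa = \lambda + L(n^r)$, it remains to evaluate $\dim V_\kappa$. By Lemma~\ref{lemma:numubasis} applied to the partition $\mu + L(1^r)$ in place of $\mu$, the module $V$ has as a canonical basis the $F(T)$ for $T \in \SSYT_{\BBN{L}}(\nu)$, and by Proposition~\ref{prop:nablaT}(i) each such $F(T)$ is a weight vector of weight $\wt(T)$. Hence $\dim V_{\lambda+L(n^r)}$ equals the number of $T \in \SSYT_{\BBN{L}}(\nu)$ with $\wt(T) = \lambda + L(n^r)$, and chaining the inequalities gives the stated bound.

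There is no real obstacle here: the argument is entirely a matter of assembling Theorem~\ref{thm:Brion}, Proposition~\ref{prop:hw}, and the explicit weight-space basis from Section~\ref{sec:model}. The only point requiring a moment's care is verifying that the construction of the canonical basis in Section~\ref{sec:model} is uniform in the inner partition, so that it applies verbatim with $\mu + L(1^r)$ replacing $\mu$; this is immediate from the definitions.
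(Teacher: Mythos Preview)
Your proposal is correct and is essentially the same as the paper's own proof: both establish the bound at $N=L$ by identifying the right-hand side with $\dim \nabla^\nu\bigl(\nabla^{\mu+L(1^r)}(E)\bigr)_{\lambda+L(n^r)}$ via the canonical basis of Lemma~\ref{lemma:numubasis} and Proposition~\ref{prop:nablaT}(i), use Proposition~\ref{prop:hw} to bound the multiplicity by this weight-space dimension, and then invoke the stability in Theorem~\ref{thm:Brion} to extend to all $N \ge L$. Your justification of the inequality $[V:\nabla^\kappa(E)] \le \dim V_\kappa$ is slightly more explicit than the paper's bare citation of Proposition~\ref{prop:hw}, but the argument is otherwise identical.
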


\begin{proof}
The bound holds when $N = L$ since the right-hand side is
\[ \dim \nabla^\nu \bigl( \nabla^{\mu + L(1^r)} (E) \bigr)_{\lambda + L(r^n)} \]
and by Proposition~\ref{prop:hw} this is an upper bound for the left-hand side.
By Theorem~\ref{thm:Brion} the bound holds for all $N \ge L$.
\end{proof}

\section{Proof of Theorem~\ref{thm:Ikenmeyer}}\label{sec:Ikenmeyer}

It is equivalent to show that 
if $n^\star \in \N_0$, $\lambda^\star \in \Par(mn^\star)$ and
$\Sym^{n^\star} \bigl( \nabla^\mu (E) \bigr)$ has $\nabla^{\lambda^\star}(E)$ as an irreducible constituent
then 
\[ [ \Sym^{n+n^\star} \bigl( \nabla^\mu (E) \bigr) : \nabla^{\lambda + \lambda^\star}(E) ] \ge 
[ \Sym^{n} \bigl( \nabla^\mu (E) \bigr) : \nabla^{\lambda}(E)]. \]

Let $c = [ \Sym^{n} \bigl( \nabla^\mu (E) \bigr) : \nabla^{\lambda}(E)]$.
Choose linearly independent highest-weight vectors $v_1, \ldots, v_c \in \Sym^n \bigl( \nabla^\mu (E) \bigr)$
each of weight $\lambda$. By the hypothesis and Proposition~\ref{prop:hw}, there is a highest-weight vector 
$w \in \Sym^{n^\star} \bigl( \nabla^\mu (E) \bigr)$
of weight $\lambda^\star$. 
Multiplying highest-weight vectors 
in the polynomial algebra $\bigoplus_{r=0}^\infty \Sym^r \bigl(\nabla^\lambda(E)\bigr)$, 
we see that $v_1w, \ldots, v_cw$ are $c$ linearly independent highest-weight vectors each in 
$\Sym^{n+n^\star}\! \bigl( \nabla^\mu (E) \bigr)$ and each of weight $\lambda + \lambda^\star$. The theorem follows.

\section{Proof of Theorem~\ref{thm:maxls}}\label{sec:maxls}

Let $d = \ell(\lambda)$ and let  $E = \langle e_1, \ldots, e_d \rangle$ is a $d$-dimensional
complex vector space.
Let $\B = \SSYT_{\{1,\ldots, d\}}(\mu)$. 
By Definition~\ref{defn:pssyt} the plethystic semistandard tableaux of shape $\mu^\nu$
whose $\mu$-tableau entries have entries from $\{1,\ldots, d\}$ are precisely
the elements of $\SSYT_\B(\nu)$.
By Lemma~\ref{lemma:numubasis}, $\nabla^\nu \bigl( \nabla^\mu (E) \bigr)$
has $\{F(S) : S \in \SSYT_\B(\nu) \}$
as a canonical basis. By Proposition~\ref{prop:nablaT}, if $S \in \SSYT_\B(\nu)$ has weight
$\lambda$ then $F(S) \in \nabla^\nu \bigl(\nabla^\mu(E) \bigr)$ is a weight vector of weight $\lambda$.

%a plethystic semistandard tableaux of shape $(\mu^\nu)$ and weight $\lambda$ is an element of $\SSYT_\B(\nu)$
%of weight $\lambda$. 

We use this canonical basis to prove the following
two results; the second is illustrated in Example~\ref{ex:notMaximal} below.

%The following two lemmas allow us to move between the two maximality conditions
%in Theorem~\ref{thm:maxls}.

\begin{lemma}\label{lemma:constituentImpliesTableau}
If $[ \nabla^\nu \bigl( \nabla^\mu (E) \bigr) : \nabla^\lambda(E) ] \ge 1$ then
there exists a plethystic semistandard tableau $T \in \SSYT_\B(\nu)$ such that $\wt(T) = \lambda$.
\end{lemma}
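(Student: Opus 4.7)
The plan is to read the statement directly off the explicit model for $\nabla^\nu\bigl(\nabla^\mu(E)\bigr)$ built in Section~\ref{sec:model}. By Lemma~\ref{lemma:numubasis} this module has the canonical basis $\{F(S) : S \in \SSYT_\B(\nu)\}$, and by Proposition~\ref{prop:nablaT}(i) each basis element $F(S)$ is a weight vector of weight $\wt(S)$. Consequently the $\lambda$-weight space $\nabla^\nu\bigl(\nabla^\mu(E)\bigr)_\lambda$ has basis
\[ \{F(S) : S \in \SSYT_\B(\nu),\ \wt(S) = \lambda \}. \]
So the conclusion of the lemma is equivalent to the assertion that the $\lambda$-weight space of $\nabla^\nu\bigl(\nabla^\mu(E)\bigr)$ is non-zero.

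The hypothesis $[\nabla^\nu\bigl(\nabla^\mu(E)\bigr) : \nabla^\lambda(E)] \ge 1$ supplies exactly this: by Proposition~\ref{prop:hw}(iii), the module decomposes as a direct sum of Schur functors, and at least one summand is isomorphic to $\nabla^\lambda(E)$. By Proposition~\ref{prop:nabla}(iv), the vector $F(t^\lambda) \in \nabla^\lambda(E)$ is a non-zero weight vector of weight $\lambda$, so this summand contributes non-trivially to the $\lambda$-weight space of $\nabla^\nu\bigl(\nabla^\mu(E)\bigr)$.

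Combining the two paragraphs, the set $\{S \in \SSYT_\B(\nu) : \wt(S) = \lambda\}$ is non-empty, and any element of it serves as the desired plethystic semistandard tableau $T$. The recollection that $\B = \SSYT_{\{1,\ldots,d\}}(\mu)$ (with $d = \ell(\lambda)$, already fixed before the statement) ensures this $T$ is a plethystic semistandard tableau of shape $\mu^\nu$ in the sense of Definition~\ref{defn:pssyt}.

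There is no real obstacle here: all the content has been front-loaded into the construction of the canonical basis of $\nabla^\nu\bigl(\nabla^\mu(E)\bigr)$ by tableaux indexed by $\SSYT_\B(\nu)$, together with the fact that these basis elements are weight vectors of the expected weights. One could alternatively phrase the argument entirely on the symmetric-function side using Proposition~\ref{prop:compositionPlethysm}, noting that the $\lambda$-weight space has dimension equal to the coefficient of $x^\lambda$ in $(s_\nu \circ s_\mu)(x_1,\ldots,x_d)$, which is positive whenever $\langle s_\nu \circ s_\mu, s_\lambda\rangle \ge 1$ via~\eqref{eq:schurnabla}; but the tableau-theoretic route is cleaner and more useful for the converse direction of Theorem~\ref{thm:maxls}.
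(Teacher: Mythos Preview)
Your proof is correct and essentially the same as the paper's: both arguments observe that the canonical basis $\{F(S) : S \in \SSYT_\B(\nu)\}$ consists of weight vectors, so it suffices to produce a non-zero element of the $\lambda$-weight space. The paper does this by taking a highest-weight vector of weight $\lambda$ and expanding it in the canonical basis, whereas you invoke the direct-sum decomposition and the vector $F(t^\lambda)$ in a $\nabla^\lambda(E)$ summand; these are minor variants of the same idea.
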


\begin{proof}
Let $v \in \nabla^\nu \bigl( \nabla^\mu (E) \bigr)$ be a highest-weight vector of weight $\lambda$.
Let $v = \sum_{S \in \SSYT_\B(\nu)} c_S F(S)$ be the expression of $v$ in the canonical basis
given by Lemma~\ref{lemma:numubasis}
of $\nabla^\nu \bigl( \nabla^\mu (E) \bigr)$. By Proposition~\ref{prop:nablaT}(i), each $S$ such
that $c_S\not= 0$ has weight~$\lambda$. Take~$T$ to be any such $S$.
\end{proof}

\begin{proposition}\label{prop:maximalTableauImpliesHW}
Suppose that $\lambda$ is maximal in the dominance order on partitions such that there
exists a plethystic semistandard tableau $T \in \SSYT_\B(\nu)$ of weight $\lambda$.
Then $F(T) \in \nabla^\nu \bigl( \nabla^\mu (E) \bigr)$ is a highest-weight vector of weight~$\lambda$.
\end{proposition}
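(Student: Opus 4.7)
The plan is to verify the hypotheses of Lemma~\ref{lemma:hw} for $F(T)$. By Proposition~\ref{prop:nablaT}(i), $F(T)$ is a weight vector of weight $\wt(T) = \lambda$, so it remains to show that $X^{(c)} \cdot F(T) = 0$ for every $c \in \{2,\ldots, d\}$. Fix such a $c$. If $\lambda_c = 0$ then $T$ has no entries equal to $c$, and the sum in Proposition~\ref{prop:nablaT}(ii) is empty, so we may assume $\lambda_c \ge 1$. Let $\beta$ be the composition obtained from $\lambda$ by replacing $\lambda_{c-1}$ with $\lambda_{c-1}+1$ and $\lambda_c$ with $\lambda_c-1$.

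I will argue that the entire weight space $\nabla^\nu\bigl(\nabla^\mu(E)\bigr)_\beta$, which contains $X^{(c)} \cdot F(T)$ by Proposition~\ref{prop:nablaT}(i),(ii), is zero. By Lemma~\ref{lemma:numubasis} together with Proposition~\ref{prop:nablaT}(i), the dimension of this weight space equals the number of $S \in \SSYT_\B(\nu)$ with $\wt(S) = \beta$. By Proposition~\ref{prop:compositionPlethysm}, the formal character of $\nabla^\nu\bigl(\nabla^\mu(E)\bigr)$ is the symmetric polynomial $(s_\nu \circ s_\mu)(x_1,\ldots, x_d)$, so this dimension equals the dimension of the $\widetilde\beta$-weight space, where $\widetilde\beta$ denotes the partition obtained by sorting $\beta$ into weakly decreasing order.

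It then suffices to observe that $\widetilde\beta \rhd \lambda$ strictly. The composition $\beta$ strictly dominates $\lambda$ since their partial sums agree except at index $c-1$, where the partial sum of $\beta$ is larger by one; and sorting only increases partial sums, so $\widetilde\beta \unrhd \beta \rhd \lambda$. By the maximality hypothesis on $\lambda$, no plethystic semistandard tableau of shape $\mu^\nu$ has weight $\widetilde\beta$, and hence $\nabla^\nu\bigl(\nabla^\mu(E)\bigr)_{\widetilde\beta} = 0$. Consequently the $\beta$-weight space also vanishes, as required, and Lemma~\ref{lemma:hw} then yields the claim.

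The main obstacle is conceptual rather than technical. The naive approach would use Proposition~\ref{prop:nablaT}(ii) to expand $X^{(c)} \cdot F(T)$ as a sum of $F(T')$ for $\nu$-tableaux $T'$ whose $\mu$-tableau entries may fail to be semistandard, and then straighten these via Corollary~\ref{cor:snake}; tracking cancellations among these straightenings appears unwieldy. Working instead at the level of weight spaces and invoking the symmetry of the formal character sidesteps the straightening entirely.
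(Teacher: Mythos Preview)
Your proof is correct and follows essentially the same strategy as the paper's: both show that $X^{(c)}\cdot F(T)$ lies in a weight space that must vanish by the maximality hypothesis. The paper argues by contradiction, expressing $X^{(c)}\cdot F(T)$ in the canonical basis and observing that any nonzero coefficient yields a plethystic semistandard tableau of the more dominant weight $\lambda^\star$ (your $\beta$), contradicting maximality. Your version avoids the straightening step entirely by working with the whole weight space.

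The one genuine refinement in your argument is the explicit appeal to the symmetry of the formal character to pass from the composition $\beta$ to the partition $\widetilde\beta$. The paper's proof simply asserts that $\wt(S)=\lambda^\star\rhd\lambda$ contradicts maximality of $\lambda$, but since $\lambda^\star$ need not be a partition (it fails exactly when $\lambda_{c-1}=\lambda_c$), this step is not quite immediate from the hypothesis as stated. Your use of Proposition~\ref{prop:compositionPlethysm} to equate the dimensions of the $\beta$- and $\widetilde\beta$-weight spaces closes this gap cleanly. So your proof is a mild tightening of the paper's, not a different route.
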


\begin{proof}
By Proposition~\ref{prop:nablaT}(i), $F(T)$ is a weight vector of weight $\lambda$.
Suppose, for a contradiction, that $F(T)$ is not highest-weight. Then there exists $c \in \{2,\ldots, d\}$
such that $X^{(c)} \cdot F(T) \not= 0$. By Proposition~\ref{prop:nablaT}(ii),
$X^{(c)} \cdot F(T)= \sum F(U)$, where each $U$ is obtained from $T$ by changing a single
$c$ to $c-1$ in a $\mu$-tableau entry of~$T$. Thus
each $U$ has weight $\lambda^\star$ where
\[ \lambda^\star_b = \begin{cases} 
\lambda_b+1 & \text{if $b = c-1$} \\
\lambda_b-1 & \text{if $b = c$} \\
\lambda_b & \text{otherwise.} \end{cases} \]
Let
\[ X^{(c)} \cdot F(T) = \sum_{S \in \SSYT_\B(\nu)} c_S F(S) \]
be the expression of $\sum F(U)$ in 
the canonical basis %\eqref{eq:numubasis} 
of $\nabla^\nu \bigl( \nabla^\mu (E) \bigr)$. 
Choose $S$ such that $c_S \not=0$. Then $\wt(S) = \lambda^\star \rhd \lambda = \wt(T)$.
This contradicts the maximality of~$\lambda$.
\end{proof}

We are now ready to prove Theorem~\ref{thm:maxls}.

\begin{proof}[Proof of Theorem~\ref{thm:maxls}]
By Lemma~\ref{lemma:constituentImpliesTableau}, if 
$[\nabla^\mu \bigl( \nabla^\mu (E) \bigr) : \nabla^{\lambda}(E)]
 \ge 1$ then there is a plethystic semistandard tableau $T\in \SSYT_\B(\nu)$ 
 of weight $\lambda$. Conversely, by Proposition~\ref{prop:maximalTableauImpliesHW}, if $\lambda$ is maximal in the dominance order such that there is a plethystic semistandard tableau $T \in \SSYT_\B(\nu)$ then $F(T)$ is a highest-weight vector, and so
 $[\nabla^\mu \bigl( \nabla^\mu (E) \bigr) : \nabla^\lambda(E) ] \ge 1$. Therefore the 
maximal partitions $\lambda$ 
in the dominance order 
such that $[\nabla^\nu \bigl( \nabla^\mu (E) \bigr) : \nabla^\lambda(E) ] \ge 1$
are precisely the maximal weights of the elements of $\SSYT_\B(\nu)$. This proves the first part of the theorem. 
%Restated in the language of polynomial representations, the
%first part of the theorem
%claims that the maximal partitions $\lambda$ 
%in the dominance order 
%such that $[\nabla^\nu \bigl( \nabla^\mu (E) \bigr) : \nabla^\lambda(E) ] \ge 1$
%are precisely the maximal weights of the elements of $\SSYT_\B(\nu)$. This is immediate from 
%Lemmas~\ref{lemma:constituentImpliesTableau} and Proposition~\ref{prop:maximalTableauImpliesHW}.
Now suppose that~$\lambda$ is maximal in the dominance order such that $\nabla^\lambda(E)$ appears in
$\nabla^\nu \bigl( \nabla^\mu (E) \bigr)$. Let $S_{(1)}, \ldots, S_{(r)}$ be the 
plethystic semistandard tableaux of shape $\mu^\nu$ and weight~$\lambda$. 
By the canonical basis in Lemma~\ref{lemma:numubasis} and Proposition~\ref{prop:nablaT}(i),
$F(S_{(1)}), \ldots, F(S_{(r)})$ form a basis for the
weight space
$\nabla^\nu \bigl( \nabla^\mu (E) \bigr)_{\!\lambda}$. By 
Proposition~\ref{prop:maximalTableauImpliesHW} 
%and the canonical basis in Lemma~\ref{lemma:numubasis}
%for $\nabla^\nu\bigl(\nabla^\mu(E) \bigr)$,
 these vectors are highest-weight.
Therefore 
\[ r = [ \nabla^\nu \bigl( \nabla^\mu (E) \bigr) : \nabla^\lambda(E) ] \]
as required.
\end{proof}

As a corollary we obtain the result mentioned in the introduction that is surprisingly non-trivial
to prove entirely combinatorially.
%having a non-trivial combinatorial proof. % which needs only Proposition~\ref{prop:maximalTableauImpliesHW} to prove.

\begin{corollary}\label{cor:partitionWeight}
If $T$ is a plethystic semistandard tableau of maximal weight then $\wt(T)$ is a partition.
\end{corollary}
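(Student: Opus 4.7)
The plan is to leverage the $\GL(E)$-action on the model $\nabla^\nu\bigl(\nabla^\mu(E)\bigr)$ from Section~\ref{sec:model} in order to show that the set of weights of plethystic semistandard tableaux of shape $\mu^\nu$ is closed under permutation of coordinates. Maximality of $\wt(T)$ will then force it to equal its own sorted rearrangement, hence to be a partition.

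First I will fix $d$ large enough that $\wt(T)$ has at most $d$ parts, take $\B = \SSYT_{\{1,\ldots,d\}}(\mu)$, and write $\beta = \wt(T)$. By Lemma~\ref{lemma:numubasis} and Proposition~\ref{prop:nablaT}(i), the set $\{F(S) : S \in \SSYT_\B(\nu)\}$ is a basis of $\nabla^\nu\bigl(\nabla^\mu(E)\bigr)$ consisting of weight vectors, so $F(T)$ is a non-zero element of the $\beta$-weight space of $\nabla^\nu\bigl(\nabla^\mu(E)\bigr)$.

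Next I will produce a plethystic semistandard tableau of weight $\tilde{\beta}$, where $\tilde{\beta}$ denotes $\beta$ rearranged in non-increasing order. Picking $\sigma \in \SymG_d$ with $\sigma \cdot \beta = \tilde{\beta}$, the permutation matrix $P_\sigma \in \GL(E)$ conjugates each diagonal matrix to the diagonal matrix with its entries permuted by $\sigma$, so by~\eqref{eq:weight} its action carries the $\beta$-weight space of any polynomial representation isomorphically onto the $\tilde{\beta}$-weight space. Applied to $\nabla^\nu\bigl(\nabla^\mu(E)\bigr)$ and read off from the canonical basis, this forces some $S \in \SSYT_\B(\nu)$ to have $\wt(S) = \tilde{\beta}$. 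Since $\tilde{\beta} \unrhd \beta$ and $\beta$ is maximal by hypothesis, I conclude $\tilde{\beta} = \beta$ and hence $\beta$ is a partition.

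The only potentially delicate point is verifying that $P_\sigma$ really does permute the weight spaces as claimed, but this is immediate from the defining formula~\eqref{eq:weight} for weight vectors. The whole approach transports the combinatorial assertion into the representation-theoretic framework of Section~\ref{sec:model}, where the $\GL(E)$-action makes it transparent; this bypasses the Bender--Knuth-style manipulations alluded to in the introduction, which a purely combinatorial proof would require.
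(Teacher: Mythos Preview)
Your argument is correct, but it takes a different route from the paper's. The paper simply invokes Proposition~\ref{prop:maximalTableauImpliesHW}: since $F(T)$ is a highest-weight vector, and the weight of any highest-weight vector in a polynomial $\GL(E)$-module is a partition, the result is immediate. Your approach instead exploits the Weyl-group symmetry of weight spaces (equivalently, the fact from Proposition~\ref{prop:compositionPlethysm} that the formal character is a symmetric polynomial) to produce a tableau of weight $\tilde\beta$, and then uses the easy inequality $\tilde\beta \unrhd \beta$ together with maximality. Your route is more self-contained---it does not need the $X^{(c)}$-action or the straightening argument underlying Proposition~\ref{prop:maximalTableauImpliesHW}---whereas the paper's one-line proof is tightly integrated with the machinery already set up for Theorem~\ref{thm:maxls}. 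Either way, the corollary follows with essentially no extra work once the model of \S\ref{sec:model} is in place.
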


\begin{proof}
Since the weight of a highest-weight vector is a partition, this is immediate from Proposition~\ref{prop:maximalTableauImpliesHW}.
\end{proof}

By Proposition~\ref{prop:closed}, if $T$ is a plethystic semistandard
tableau of shape $\mu^{(1^n)}$ and maximal weight
then the set of $\mu$-tableau entries of $T$ is closed, in the sense of Definition~\ref{defn:closureAlt}. 
The converse does not hold: we show this in the following example, which makes constructive the 
proof of Proposition~\ref{prop:maximalTableauImpliesHW}.

\begin{example}\label{ex:notMaximal}
For ease of notation we shall identify $\nabla^{(1^n)}\bigl(\nabla^{\mu}(E)\bigr)$ with $\bigwedge^n \bigl(\nabla^\mu(E)
\bigr)$ via the map sending $F(T)$, where $T$ is a plethystic semistandard tableau with $\mu$-tableau
entries $t_1,\ldots, t_n$ read from top to bottom, to $F(t_1) \wedge \cdots \wedge F(t_n)$.

Let $T$ be the plethystic semistandard tableau of shape $(2,2)^{(1^{11})}$ and
weight $(17,11,8,8)$ 
whose $(2,2)$-tableau entries, read from top to bottom are
\[ %\mathcal{T} =
%\left\{ 
\scalebox{1}{$\displaystyle
\young(11,22)\,,\,\young(11,23)\,,\,\young(12,23)\,,\,\young(11,33)\,,\,\young(12,33)\,,\,\young(11,24)\,,\,\young(12,24)\,,
\young(11,34)\,,\,\young(12,34)\,,\,\young(11,44)\,,\,\young(12,44)$}\,. %\right\}. 
\]
Observe that the set $\mathcal{T}$ of these $(2,2)$-tableaux
is closed. By Proposition~\ref{prop:nablaT}(ii),
$X^{(4)}\cdot F(T)$ has eight summands, 
each obtained by changing an entry of $4$ in the final six $(2,2)$-tableaux above
to $3$. In all but two cases, the new $(2,2$)-tableau obtained is semistandard, and so present in the closed set $\mathcal{T}$;
under our agreed identification, the summand is of the form $ \cdots \wedge\, F(s) \wedge \cdots \wedge\, F(s) \wedge
\cdots $, and so vanishes. Let $u$ and~$u'$ denote the final two $(2,2)$-tableau shown above.
The corresponding summands of $X^{(4)} \cdot F(T)$ are $F(U)$ and $F(U')$ where $U$ and $U'$
are the plethystic semistandard tableaux of shape $(2,2)^{(1^{11})}$ with sets of entries
\[ \mathcal{T} \;\backslash\, \{ u \} \cup \left\{\, \young(11,43) \, \right\} \quad\text{and}\quad
 \mathcal{T} \;\backslash\, \{u'\} \cup \left\{\, \young(12,43) \, \right \}, \]
respectively. By the snake relation defined in \eqref{eq:snake} with $A = \{(2,1)\}$ and $B = \{(1,2),(2,2)\}$
we have, working in $\nabla^{(2,2)}(E)$,
\begin{align*} 
F\left(\, \young(11,43)\, \right) &= F\left(\, \young(11,34) \, \right) + F\left(\, \young(14,13) \,
\right) = F\left(\, \young(11,34) \, \right) \\
F\left(\, \young(12,43)\, \right) &= F\left(\, \young(12,34) \, \right) + F\left(\, \young(14,23) \,
\right) = F\left(\, \young(12,34) \, \right) - F\left(\, \young(13,24) \, \right). 
\end{align*}
Therefore $F(U) = 0$ and $F(U') = - F(T')$ where $T'$ is the 
 plethystic semistandard tableau of shape $(2,2)^{(1^{11})}$ whose entries are the same as $T$,
 except for the final entry $u'$, which is replaced with $\young(13,24)\rule[-12pt]{0pt}{24pt}$\,; since this
 new $(2,2)$-tableau is greater in the total order
 than all the tableaux in $\mathcal{T}$, no reordering within the column is necessary
 in order to make $T'$ semistandard.
Therefore $X^{(4)} \cdot F(T) \not=0$, and so $F(T)$ 
\emph{is not} 
a highest-weight vector. As expected from the proof of Proposition~\ref{prop:maximalTableauImpliesHW}, 
we have obtained a plethystic semistandard tableau  $T'$ of more dominant weight, namely, $(17,11,9,7)$, 
by expressing $X^{(4)} \cdot F(T)$
in the canonical basis of $\nabla^{(1^{11})}\bigl( \nabla^{(2,2)}(E) \bigr)$. 
We leave it to the reader to show that $F(T')$ is a highest-weight vector, and correspondingly,
$T'$ has maximal weight for its shape.
\end{example}

Our final example shows that the converse of Proposition~\ref{prop:maximalTableauImpliesHW} is false.
%UPDATE: to generate more use display $ nonMaximalUndecPTableaux [2,2,2] [2,1] from PlethysticTableauxDecInc

\begin{example} 
%UPDATE: a smaller example has just the first three tableaux, weight (4,4,1), dominates by (5,3,1), again using first three
Let $T$ be the plethystic semistandard tableau of shape $(2,1)^{(1^4)}$ with entries
\[ \young(11,2)\, , \ \young(12,2)\, , \ \young(13,2)\,, \ \young(14,2) \]
read from top to bottom. Then $F(T) \in \bigwedge^4 \bigl( \nabla^{(2,1)}(E) \bigr)$ is a highest-weight
vector of weight $(5,5,1,1)$
and so $\langle s_{(1^4)} \circ s_{(2,1)}, s_{(5,5,1,1)}\rangle \ge 1$. However $T$ is not of maximal weight for its shape since $(6,4,2) \unrhd (5,5,1,1)$
and the plethystic semistandard tableau $U$ of shape $(2,1)^{(1^4)}$ with entries
\[ \young(11,2)\, , \ \young(11,3)\, , \ \young(12,2)\,, \ \young(12,3) \]
read from top to bottom has weight $(6,4,2)$. It is easily seen that $U$
has maximal weight in the dominance order, and so $F(U)$ is a highest-weight vector.
In fact there are two plethystic semistandard tableau of shape $(2,1)^{(1^4)}$ and weight $(6,4,2)$,
the second is obtained from $U$ 
by swapping 
the $2$ and $3$ in the final $(2,1)$-tableau entry above. Thus, by Theorem~\ref{thm:maxls},
 $\langle s_{(1^4)} \circ s_{(2,1)},
s_{(6,4,2)} \rangle = 2$; this is one of the smallest examples where the multiplicity of a maximal
constituent is more than $1$.
\end{example}

\section*{Acknowledgements}

We thank the anonymous referee for a very careful reading of an earlier version of this paper
and helpful comments and corrections.

\def\cprime{$'$} \def\Dbar{\leavevmode\lower.6ex\hbox to 0pt{\hskip-.23ex
  \accent"16\hss}D} \def\cprime{$'$}
\providecommand{\bysame}{\leavevmode\hbox to3em{\hrulefill}\thinspace}
\providecommand{\MR}{\relax\ifhmode\unskip\space\fi MR }
% \MRhref is called by the amsart/book/proc definition of \MR.
\providecommand{\MRhref}[2]{%
  \href{http://www.ams.org/mathscinet-getitem?mr=#1}{#2}
}
\providecommand{\href}[2]{#2}
\renewcommand{\MR}[1]{\relax}

%    Bibliographies can be prepared with BibTeX using amsplain,
%    amsalpha, or (for "historical" overviews) natbib style.
\bibliographystyle{amsplain}
%    Insert the bibliography data here.

\end{document}